\tikzset{
    @pos/.style={@pos1={#1},@pos2={#1}},
    @ratio/.style={@ratio1={#1},@ratio2={#1}},
    @delta/.style={@delta1={#1},@delta2={#1}},
    @edge/.style={@@edge/.append style={#1}},
    @edge 0/.style={@@edge 0/.append style={#1}},
    @edge 1/.style={@@edge 1/.append style={#1}},
    @edge 2/.style={@@edge 2/.append style={#1}},
    @edge 3/.style={@@edge 3/.append style={#1}},
    @edge 4/.style={@@edge 4/.append style={#1}},
    @pos1/.store in=\qrr@posA,
    @pos2/.store in=\qrr@posB,
    @ratio1/.store in=\qrr@ratioA,
    @ratio2/.store in=\qrr@ratioB,
    @delta1/.store in=\qrr@deltaA,
    @delta2/.store in=\qrr@deltaB,
    @pos=.5,
    @ratio=.5,
    @delta=.1,
}
\newcommand*{\connectThree}[4][]{
    \begingroup
    \tikzset{#1}
    \coordinate (@aux1) at ($(#2)!\qrr@ratioA!(#3)$);
    \coordinate (@aux2) at ($(#4)!\qrr@posA!(@aux1)$);
    \path (@aux2) edge[@@edge/.try, @@edge 0/.try, @@edge 3/.try] (#4);
    \draw[@@edge/.try, @@edge 1/.try] (@aux2) .. controls ($(#4)!\qrr@posA+\qrr@deltaA!(@aux1)$) .. (#2);
    \draw[@@edge/.try, @@edge 2/.try] (@aux2) .. controls ($(#4)!\qrr@posA+\qrr@deltaA!(@aux1)$) .. (#3);
    \endgroup
}
\renewcommand*{\connectThree}[4][]{\connectFour[#1, @@edge 4/.style={draw=none}, @ratio2=0]{#2}{#3}{#4}{0,0}}
\newcommand*{\connectFour}[6][]{
    \begingroup
    \tikzset{#1}
    \coordinate (@aux1a) at ($(#2)!\qrr@ratioA!(#3)$);
    \coordinate (@aux1b) at ($(#4)!\qrr@ratioB!(#5)$);
    \coordinate (@aux2a) at ($(@aux1b)!\qrr@posA!(@aux1a)$);
    \coordinate (@aux2b) at ($(@aux1a)!\qrr@posB!(@aux1b)$);
    \path (@aux2a) edge[@@edge/.try,@@edge 0/.try] (@aux2b);
    \draw[@@edge/.try,@@edge 1/.try] (@aux2a) .. controls ($(@aux1b)!\qrr@posA+\qrr@deltaA!(@aux1a)$) .. (#2);
    \draw[@@edge/.try,@@edge 2/.try] (@aux2a) .. controls ($(@aux1b)!\qrr@posA+\qrr@deltaA!(@aux1a)$) .. (#3);
    \draw[@@edge/.try,@@edge 3/.try] (@aux2b) .. controls ($(@aux1a)!\qrr@posB+\qrr@deltaB!(@aux1b)$) .. (#4);
    \draw[@@edge/.try,@@edge 4/.try] (@aux2b) .. controls ($(@aux1a)!\qrr@posB+\qrr@deltaB!(@aux1b)$) .. (#5);
    \draw[draw=none] (@aux1a) -- (@aux1b) node[midway,above,sloped,font=\tiny,shape=rectangle,inner xsep=+0pt,draw=none,align=center,fill=white,fill opacity=.75,outer ysep=\pgflinewidth,text opacity=1] {#6};
    \endgroup
}
\theoremstyle{plain}
\newtheorem {lemma}{Lemma}
\newtheorem {proposition}[lemma]{Proposition}
\newtheorem {theorem}[lemma]{Theorem}
\newtheorem {corollary}[lemma]{Corollary}
\newtheorem {replacement lemma}[lemma]{Replacement Lemma}
\theoremstyle{definition}
\newtheorem{definition}[lemma]{Definition}
\newtheorem{remark}[lemma]{Remark}
\newtheorem {example}[lemma]{Example}
\newcommand{\R}{\mathbb{R}}
\newcommand{\N}{\mathbb{N}}
\newcommand{\Z}{\mathbb{Z}}
\newcommand{\X}{\langle X \rangle}
\def\Oo{\mathcal{O}}
\newcommand{\GKdim}{\operatorname{GKdim}}
\newcommand{\M}{\operatorname{\mathcal{M}^{ab}}}
\newcommand{\A}{\operatorname{\mathcal{A}_K^{loc}}}
\newcommand{\Reg}{\operatorname{Reg}}
\newcommand{\Mult}{\operatorname{Mult}}
\newcommand{\supp}{\operatorname{supp}}
\newcommand{\Hy}{\operatorname{\mathcal H}}
\newcommand{\nod}{\operatorname{nod}}
\newcommand{\NF}{\operatorname{NF}}
\newcommand{\irr}{\operatorname{irr}}
\newcommand{\gr}{\operatorname{gr}}
\newcommand{\Gr}{\operatorname{Gr}}
\newcommand{\Mod}{\operatorname{Mod}}
\newcommand{\proj}{\operatorname{proj}}
\newcommand{\Id}{\operatorname{Id}}
\newcommand{\ann}{\operatorname{ann}}
\newcommand{\Span}{\operatorname{span}}
\title{Leavitt path algebras of hypergraphs}
\author{Raimund Preusser}
\address{Department of Mathematics,
University of Brasilia, Brazil}
\email{raimund.preusser@gmx.de}
\subjclass[2000]{16S10, 16W10, 16W50, 16D70} 
\keywords{Graph algebras, hypergraphs, K-theory}
\begin{document}

\begin{abstract} 
We define Leavitt path algebras of hypergraphs generalizing simultaneously Leavitt path algebras of finitely separated graphs and Leavitt path algebras of row-finite vertex-weighted graphs. We find linear bases for those algebras, compute their Gelfand-Kirillov dimension, obtain some results on ring-theoretic properties like simplicity, von Neumann regularity and Noetherianess and investigate their $K$-theory and graded $K$-theory. By doing so we obtain new results on the Gelfand-Kirillov dimension and graded $K$-theory of Leavitt path algebras of separated graphs and on the graded $K$-theory of weighted Leavitt path algebras.
\end{abstract}

\maketitle
\tableofcontents
\section{Introduction}
In a series of papers \cite{vitt56, vitt57, vitt62, vitt65} William Leavitt studied algebras that are now denoted by $L_K(n,n+k)$ and have been coined Leavitt algebras. Let $X=(x_{ij})$ and $Y=(y_{ji})$ be $(n+k)\times n$ and $n\times (n+k)$ matrices consisting of symbols $x_{ij}$ and $y_{ji}$, respectively. Then for a field $K$, $L_K(n,n+k)$ is the associative, unital $K$-algebra generated by all $x_{ij}$ and $y_{ji}$ subject to the relations $XY=I_{n+k}$ and $YX=I_n$. Leavitt established that the algebra $L_K(n,n+k)$ has module type $(n,k)$. Furthermore he showed that the algebras $L_K(1,k+1)$ are simple and the algebras $L_K(n,n+k)$, $n\geq 2$ domains. Recall that a ring $R$ has module type $(n,k)$ if $n$ and $k$ are the least positive integers such that $R^n\cong R^{n+k}$ as left $R$-modules. 

Leavitt path algebras were introduced by G. Abrams and G. Aranda Pino in 2005 \cite{aap05} and independently by P. Ara, M. Moreno and E. Pardo in 2007 \cite{Ara_Moreno_Pardo} as $K$-algebras associated to directed graphs. For the directed graph with one vertex and $k+1$ loops one recovers the Leavitt algebra $L_K(1,k+1)$. The definition and the development of the theory were inspired on the one hand by Leavitt's construction of $L_K(1,k+1)$ and on the other hand by the Cuntz algebras $\Oo_n$ \cite{cuntz1} and the Cuntz-Krieger algebras in $C^*$-algebra theory \cite{raeburn}. The Cuntz algebras and later Cuntz-Krieger type $C^*$-algebras revolutionised $C^*$-theory, leading ultimately to the astounding
Kirchberg-Phillips classification theorem~\cite{phillips}. The Leavitt path algebras have created the same type of stir in the algebraic community. The development of Leavitt path algebras and its interaction with graph $C^*$-algebras have been well-documented in several publications and we refer the reader to~\cite{abrams-ara-molina} and the references therein. 
 
Since their introductions, there have been several attempts to introduce a generalisation of the Leavitt path algebras which would cover the algebras $L_K(n,n+k)$ for any $n\geq 1$, as well. P. Ara and K. Goodearl introduced Leavitt path algebras of separated graphs in 2012~\cite{aragoodearl}. They showed that any Leavitt algebra $L_K(n,n+k)$ is a corner ring of a Leavitt path algebra of a finitely separated graph. Weighted Leavitt path algebras were introduced by R. Hazrat in 2013 \cite{hazrat13}. For the weighted graph with one vertex and $n+k$ loops of weight $n$ one recovers the Leavitt algebra $L_K(n,n+k)$. If the weights of all the edges are $1$, then the weighted Leavitt path algebras reduce to the usual Leavitt path algebras. 

In this paper we define and investigate Leavitt path algebras of hypergraphs. A (directed) hypergraph can be thought of as a directed graph where the edges are allowed to have multiple sources and ranges. Usually the source and the range of an edge in a hypergraph (a ``hyperedge") are required to be nonempty, finite subsets of the vertex set. In this paper we use a slightly modified definition of a hypergraph. We only require that the source and the range of a hyperedge are multisets over the vertex set with finite, nonempty support. Hence a vertex may appear in the source (resp. range) of a hyperedge more than once. 

The rest of the paper is organised as follows.

In Section 2 we recall some standard notation which used throughout the paper.

In Section 3 we define the Leavitt path algebra $L_K(H)$ of a hypergraph $H$. We show that these algebras generalise the Leavitt path algebras of finitely separated graphs and row-finite vertex-weighted graphs. Moreover, we prove that a Leavitt path algebra $L_K(H)$ of a hypergraph $H$ is an involutary $\Z^n$-graded algebra with local units where $n$ is the supremum over all the cardinalities of sources of hyperedges in $H$ (the cardinality $|M|$ of a multiset $M$ with finite, nonempty support is defined in the first paragraph of Section 3).

In Section 4 we show that the category $\Hy$ of hypergraphs admits arbitrary direct limits. The main result of the section is Proposition \ref{proplim2} which says that for any hypergraph $H$ its Leavitt path algebra $L_K(H)$ is a direct limit of a direct system of Leavitt path algebras of finite hypergraphs.

In Section 5 we find linear bases for the Leavitt path algebras of hypergraphs. These bases play an important role in Sections 6, 7 and 8.

In Section 6 we compute the Gelfand-Kirillov dimension of a Leavitt path algebra of a hypergraph. The GK dimension of a Leavitt path algebra of a row-finite weighted graph was already known, see \cite{preusser}. But the result for the GK dimension of a Leavitt path algebra of a finitely separated graph seems to be new.

In Section 7 we show that the Leavitt path algebra $L_K(H)$ has a ``local valuation'' provided the hypergraph $H$ satisfies Condition (LV), i.e. for any hyperedge $h$ the cardinalities of $s(h)$ and $r(h)$ are at least $2$. We deduce that if $H$ is a connected hypergraph which satisfies Condition (LV) and has at least one hyperedge, then $L_K(H)$ is prime, nonsingular, not von Neumann regular and semiprimitive. Furthermore we classify the hypergraphs $H$ such that $L_K(H)$ is a domain.

In Section 8 we show that a Leavitt path algebra $L_K(H)$ that is finite-dimensional as a $K$-vector space or simple or left Artinian or right Artininan or von Neumann regular is isomorphic to a Leavitt path algebra of a finitely separated graph. We also prove a result on the Noetherianess of $L_K(H)$.

In Section 9 we compute the monoid $V(L_K(H))$ for any hypergraph $H$. Recall that for a ring $R$ with local units, $V(R)$ is the set of all isomorphism classes of unital finitely generated projective left $R$-modules which becomes an abelian monoid by defining $[P]+[Q]:=[P\oplus Q]$. The Grothendieck group $K_0(R)$ is the group completion of $V(R)$.

In Section 10 we compute the monoid $V^{\gr}(L_K(H))$ with respect to any grading on $L_K(H)$ induced by an ``admissible weight map" (we show that for example the standard $\Z^n$-grading on $L_K(H)$ defined in Section 3 is induced by an admissible weight map). To the best knowledge of the author, the graded $V$-monoid of a Leavitt path algebra of a finitely separated graph or a row-finite vertex-weighted graph had not been computed before. Recall that for a graded ring $R$ with graded local units, $V^{\gr}(R)$ is the set of all isomorphism classes of graded unital finitely generated projective left $R$-modules which becomes an abelian monoid by defining $[P]+[Q]:=[P\oplus Q]$. The graded Grothendieck group $K^{gr}_0(R)$ is the group completion of $V^{\gr}(R)$. 

\section{Notation}
Throughout the paper $K$ denotes a field. By a ring resp. $K$-algebra we mean an associative (but not necessarily commutative or unital) ring resp. $K$-algebra. By an ideal of a ring we mean a twosided ideal. $\N$ denotes the set of positive integers, $\N_0$ the set of nonnegative integers, $\Z$ the set of integers and $\R_+$ the set of positive real numbers. 


\section{Leavitt path algebras of hypergraphs}
Recall that a multiset is a modification of the concept of a set that, unlike a set, allows for multiple instances for each of its elements. Formally a {\it multiset} $M$ over a set $X$ is a function $M:X\rightarrow \N_0$. If $x\in X$, then $M(x)$ is called the {\it multiplicity} of $x$. The set $M_{\supp}:=\{x\in X \mid M(x)>0\}$ is called the {\it support} of $M$. We denote by $\Mult(X)$ the set of all multisets over $X$ that have finite, nonempty support. For an element $M\in\Mult(X)$ we set $|M|:=\sum\limits_{x\in M_{\supp}} M(x)$.  
 
\begin{definition}
A {\it (directed) hypergraph} is a quadruple $H=(H^0,H^1,s,r)$ where $H^0$ and $H^1$ are sets and $s,r:H^1\rightarrow \Mult(H^0)$ are maps. The elements of $H^0$ are called {\it vertices} and the elements of $H^1$ {\it hyperedges}. $H$ is called {\it finite} if $H^0$ and $H^1$ are finite sets and {\it empty} if $H^0=H^1=\emptyset$. In this article all hypergraphs are assumed to be nonempty.
\end{definition}
\begin{definition}\label{defhlpa}
Let $H$ be a hypergraph. For any $h\in H^1$ write $s(h)=\{s(h)_1,\dots,s(h)_{|s(h)|}\}$ and $r(h)=\{r(h)_1,\dots,r(h)_{|r(h)|}\}$. The $K$-algebra presented by the generating set 
\[\{v,h_{ij},h_{ij}^*\mid v\in H^0, h\in H^1, 1\leq i\leq |s(h)|,  1\leq j\leq |r(h)|\}\]
and the relations
\begin{enumerate}[(i)]
\item $uv=\delta_{uv}u\quad(u,v\in H^0)$,
\medskip
\item $s(h)_ih_{ij}=h_{ij}=h_{ij}r(h)_j,~r(h)_jh_{ij}^*=h_{ij}^*=h_{ij}^*s(h)_i\quad(h\in H^1, 1\leq i\leq |s(h)|,  1\leq j\leq |r(h)|)$,
\medskip
\item $\sum\limits_{k=1}^{|r(h)|}h_{ik}h_{jk}^*= \delta_{ij}s(h)_i\quad(h\in H^1, ~1\leq i,j\leq |s(h)|)$ and
\medskip
\item $\sum\limits_{k=1}^{|s(h)|}h_{ki}^*h_{kj}= \delta_{ij}r(h)_i\quad(h\in H^1,~1\leq i,j\leq |r(h)|)$
\end{enumerate}
is called the {\it Leavitt path algebra of $H$} and is denoted by $L_K(H)$. 
\end{definition}

\begin{remark}\label{remhlpa}
$~$\\
\vspace{-0.6cm}
\begin{enumerate}[(a)]
\item One checks easily that the isomorphism class of the algebra $L_K(H)$ does not depend on the chosen ordering of the elements of the multisets $s(h)$ and $r(h)$ ($h\in H^1$).
\item 
Let $H$ be a hypergraph. Define a directed graph $E=(E^0,E^1, s',r')$ by $E^0=H^0$, $E^1=\{h_{ij} \mid h\in H^1, 1\leq i\leq |s(h)|, 1\leq j\leq |r(h)|\}$, $s'(h_{ij})=s(h)_i$ and $r'(h_{ij})=r(h)_j$. The graph $E$ is called the {\it directed graph associated to $H$}. Let $\hat E$ be the double graph of $E$ and $K\hat E$ the path $K$-algebra of $\hat E$ (see for example \cite[Remark 1.2.4]{abrams-ara-molina}). Then $L_K(H)$ is isomorphic to the quotient of $K\hat E$ by the ideal of $K\hat E$ generated by relations (iii) and (iv) in Definition \ref{defhlpa}.
\item Let $H$ be a hypergraph and $A$ a $K$-algebra which contains a set $X:=\{a_v, b_{h,i,j}, c_{h,i,j}\mid v\in H^0, h\in H^1, 1\leq i\leq |s(h)|,  1\leq j\leq |r(h)|\}$ such that\\
\vspace{-0.1cm}
\begin{enumerate}[(i)]
\item the $a_v$'s are pairwise orthogonal idempotents, 
\medskip
\item
$a_{s(h)_i}b_{h,i,j}=b_{h,i,j}=b_{h,i,j}a_{r(h)_j},~a_{r(h)_j}c_{h,i,j}=c_{h,i,j}=c_{h,i,j}a_{s(h)_i}\quad(h\in H^1, 1\leq i\leq |s(h)|,  1\leq j\leq |r(h)|)$,
\medskip
\item $\sum\limits_{k=1}^{|r(h)|}b_{h,i,k}c_{h,j,k}= \delta_{ij}a_{s(h)_i}\quad(h\in H^1, ~1\leq i,j\leq |s(h)|)$ and
\medskip
\item $\sum\limits_{k=1}^{|s(h)|}c_{h,k,i}b_{h,k,j}= \delta_{ij}a_{r(h)_i}\quad(h\in H^1,~1\leq i,j\leq |r(h)|)$.
\end{enumerate}
We call $X$ an {\it $H$-family} in $A$. By the relations defining $L_K(H)$, there exists a unique $K$-algebra homomorphism $\phi: L_K(H)\rightarrow A$ such that $\phi(v)=a_v$, $\phi(h_{ij})=b_{h,i,j}$ and $\phi(h^*_{ij})=c_{h,i,j}$ for all $v\in H^0$, $h\in H^1$, $1\leq i\leq |s(h)|$ and $1\leq j\leq |r(h)|$. We will refer to this as the {\it Universal Property of $L_K(H)$}.
\end{enumerate}
\end{remark}

\begin{example}\label{ex1}
Consider the hypergraph $H=(H^0, H^1, s, r)$ where $H^0=\{v_1,v_2,w_1,w_2\}$, $H^1=\{h\}$, $s(h)=\{v_1,v_2\}$ and $r(h)=\{w_1,w_2\}$. We visualize $H$ as follows:\\
\begin{center}
\begin{tikzpicture}
  \node (H) at (0,-1.5) { $H:$ };
  \node (v_1) at (2,0)   { $v_1$ };
  \node (v_2) at (2,-3) { $v_2$ };
  \node (w_1) at (8,0)  { $w_1$ };
  \node (w_2) at (8,-3)  { $w_2$ };
  \connectFour[
  @ratio=.5,
  @pos1=.7,
  @pos2=.7,
  @edge 3=->,
  @edge 4=->,
  @edge=thick
  ]{v_1}{v_2}{w_1}{w_2}{h}
\end{tikzpicture}
\end{center}
$~$\\
The Leavitt path algebra $L_K(H)$ of $H$ is the $K$-algebra presented by the generating set 
\[\{v_i, w_i,h_{ij},h_{ij}^*\mid 1\leq i,j\leq 2\}\]
and the relations
\begin{enumerate}[(i)]
\item $uu'=\delta_{uu'}u\quad(u,u'\in \{v_1,v_2,w_1,w_2\})$,
\item $v_ih_{ij}=h_{ij}=h_{ij}w_j,~w_jh_{ij}^*=h_{ij}^*=h_{ij}^*v_i\quad(h\in H^1, 1\leq i,j\leq 2)$,
\item $\sum\limits_{k=1}^{2}h_{ik}h_{jk}^*= \delta_{ij}v_i\quad(1\leq i,j\leq 2)$ and
\item $\sum\limits_{k=1}^{2}h_{ki}^*h_{kj}= \delta_{ij}w_i\quad(1\leq i,j\leq 2)$.
\end{enumerate}
\end{example}


\begin{example}\label{ex3}
Let $(E,C)$ be a finitely separated graph (see \cite{aragoodearl}). Write each $X\in C$ as $X=\{e_{X,1},\dots,e_{X,n_X}\}$. Define a hypergraph $H=(H^0,H^1,s',r')$ by $H^0=E^0$, $H^1=\{h_X\mid X\in C\}$, $s'(h_X)=\{s(e_{X,1})\}$ and $r'(h_X)=\{r(e_{X,1}),\dots,r(e_{X,n_X})\}$. Then $L_K(E,C)\cong L_K(H)$.
\end{example}

By a {\it vertex-weighted graph} we mean a weighted graph in the sense of \cite{preusser2} such that edges emitted by the same vertex have the same weight. By \cite[Examples 5,6]{preusser2} the class of Leavitt path algebras of row-finite vertex-weighted graphs is big enough to contain all Leavitt path algebras of row-finite directed graphs and all Leavitt algebras $L_K(n,n+k)$.

\begin{example}\label{ex4}
Let $(E,w)$ be a row-finite vertex-weighted graph. Let $\Reg(E)$ denote the set of all vertices in $E$ that emit at least one edge. For each $v\in \Reg(E)$ write $s^{-1}(v)=\{e_{v,1},\dots,e_{v,n_v}\}$. Define a hypergraph $H=(H^0,H^1,s',r')$ by $H^0=E^0$, $H^1=\{h_v\mid v\in \Reg(E)\}$, $s'(h_v)=\{\underbrace{v,\dots,v}_{w(v)~times}\}$ and $r'(h_v)=\{r(e_{v,1}),\dots,r(e_{v,n_v})\}$. Then $L_K(E,w)\cong L_K(H)$.
\end{example}

Recall that a ring $R$ is said to have a {\it set of local units} $X$ in case $X$ is a set of idempotents in $R$ having the property that for each finite subset $S\subseteq R$ there exists an $x\in X$ such that $xsx=s$ for any $s\in S$.

\begin{proposition}\label{propprop}
Let $H$ be a hypergraph. Then:
\begin{enumerate}[(i)]
\item $L_K(H)$ has a set of local units, namely the set of all finite sums of distinct elements of $H^0$. If $H$ is finite, then $L_K(H)$ is a unital ring with $\sum\limits_{v\in H^0} v$ as multiplicative identity.
\item There is an involution $*$ on $L_K(H)$ mapping $k\mapsto k$, $v\mapsto v$, $h_{ij}\mapsto h_{ij}^*$ and $h_{ij}^*\mapsto h_{ij}$ for any $k\in K$, $v\in H^0$, $h\in H^1$, $1\leq i\leq |s(h)|$ and $1\leq j \leq |r(h)|$. 
\item Set $n:=\sup\{|s(h)|\mid h \in H^{1}\}$ (note that $n$ might be infinity). One can define a $\mathbb Z^n$-grading on $L_K(H)$ by setting $\deg(v):=0$, $\deg(h_{ij}):=\alpha_i$ and $\deg(h_{ij}^*):=-\alpha_i$ for any $v\in E^0$, $h \in E^{1}$, $1\leq i\leq |s(h)|$ and $1\leq j \leq |r(h)|$. Here $\alpha_i$ denotes the element of $\mathbb Z^n$ whose $i$-th component is $1$ and whose other components are $0$.
\end{enumerate}
\end{proposition}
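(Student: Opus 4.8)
The plan is to deduce all three statements directly from the defining relations of Definition \ref{defhlpa}, invoking the Universal Property of $L_K(H)$ (Remark \ref{remhlpa}(c)) for parts (ii) and (iii).

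\emph{Part (i).} Relation (i) says precisely that $H^0$ is a set of pairwise orthogonal idempotents: $v^2=v$ and $uv=0$ for $u\neq v$. Hence any finite sum $e=\sum_{v\in F}v$ of distinct vertices is idempotent. For the local-unit property I would observe that every generator is absorbed on each side by a vertex: relations (i) and (ii) give, for each generator $g$, vertices $p(g),q(g)\in H^0$ with $p(g)\,g=g=g\,q(g)$ (take $p=q=v$ for $g=v$; $p=s(h)_i,\ q=r(h)_j$ for $g=h_{ij}$; and $p=r(h)_j,\ q=s(h)_i$ for $g=h_{ij}^*$). Consequently any monomial $m$ in the generators satisfies $p\,m=m=m\,q$ for suitable $p,q\in H^0$. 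Given a finite set $S\subseteq L_K(H)$, I would write each element as a $K$-linear combination of monomials, collect the finitely many such $p$'s and $q$'s into a finite set $F$, and set $e=\sum_{v\in F}v$; orthogonality of the vertices then yields $ese=s$ for all $s\in S$. If $H$ is finite, the same computation shows $\sum_{v\in H^0}v$ is a two-sided identity.

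\emph{Part (ii).} I would realize $*$ as an anti-automorphism using the Universal Property applied to the opposite algebra $L_K(H)^{\mathrm{op}}$. Put $a_v:=v$, $b_{h,i,j}:=h_{ij}^*$ and $c_{h,i,j}:=h_{ij}$; the claim is that this is an $H$-family in $L_K(H)^{\mathrm{op}}$. Reading all products in $L_K(H)^{\mathrm{op}}$ (so that $xy$ there is $yx$ in $L_K(H)$), axiom (i) for the family is relation (i); the four equalities in axiom (ii) are exactly $h_{ij}^*s(h)_i=h_{ij}^*=r(h)_jh_{ij}^*$ and $h_{ij}r(h)_j=h_{ij}=s(h)_ih_{ij}$, which are relation (ii); axiom (iii) reads $\sum_k h_{jk}h_{ik}^*=\delta_{ij}s(h)_i$, which is relation (iii); and axiom (iv) reads $\sum_k h_{kj}^*h_{ki}=\delta_{ij}r(h)_i$, which is relation (iv). The Universal Property then produces a $K$-algebra homomorphism $L_K(H)\to L_K(H)^{\mathrm{op}}$ fixing every $v$ and swapping $h_{ij}\leftrightarrow h_{ij}^*$, i.e.\ a $K$-linear anti-endomorphism $*$ of $L_K(H)$. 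Since $*\circ *$ is a homomorphism fixing all generators, it equals $\id$, so $*$ is an involution.

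\emph{Part (iii).} I would obtain the grading by checking that all defining relations are homogeneous for the stated degree assignment, so that the grading descends from the free algebra on the generators to the quotient $L_K(H)$. Relations (i) and (ii) are visibly homogeneous (of degrees $0$ and $\pm\alpha_i$). The crucial point is relation (iii): each summand $h_{ik}h_{jk}^*$ has degree $\alpha_i-\alpha_j$, which does not depend on the summation index $k$ because the degree of an edge or ghost edge depends only on its source index; hence the left side is homogeneous of degree $\alpha_i-\alpha_j$, agreeing with $\delta_{ij}s(h)_i$ when $i=j$ and becoming a relation of the homogeneous form ``(an element of degree $\alpha_i-\alpha_j$) $=0$'' when $i\neq j$. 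Relation (iv) is homogeneous of degree $0$, since each summand $h_{ki}^*h_{kj}$ has degree $-\alpha_k+\alpha_k=0$. (When $n=\infty$ all these degrees lie in $\bigoplus_i\mathbb{Z}$, so the assignment is well-defined.) The one step carrying real content is this homogeneity check for relation (iii); it is precisely what dictates that the grading group be indexed by the source-cardinalities $|s(h)|$ and that the degrees be recorded using only the source index.
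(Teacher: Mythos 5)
Your proposal is correct throughout. Parts (i) and (iii) run essentially the same course as the paper: for (i) the paper simply notes that the vertices are mutually orthogonal idempotents with $\sum_{v\in H^0}vL_K(H)=\sum_{v\in H^0}L_K(H)v=L_K(H)$, which is exactly the absorption argument you spell out in more detail; for (iii) the paper likewise puts the grading on the free algebra $K\X$ and observes that relations (i)--(iv) are homogeneous, so the grading descends --- your explicit verification that each summand $h_{ik}h_{jk}^*$ has degree $\alpha_i-\alpha_j$ independently of $k$, while $h_{ki}^*h_{kj}$ has degree $0$, is the content the paper leaves as ``clearly homogeneous''. Part (ii) is where you genuinely diverge: the paper defines $*$ directly on the free algebra $K\X$ and notes that the set of relations (i)--(iv) is invariant under $*$, so the involution passes to the quotient; you instead exhibit $\{v,\,h_{ij}^*,\,h_{ij}\}$ as an $H$-family in $L_K(H)^{\mathrm{op}}$ and invoke the Universal Property of Remark \ref{remhlpa}(c), then use uniqueness to see $*\circ *=\id$. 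Your checks of the four family axioms in the opposite algebra are all correct (note that relations (iii) and (iv) each map to themselves only after swapping $i$ and $j$, which your computation handles via $\delta_{ij}$-symmetry). The two routes buy slightly different things: the paper's is shorter and makes manifest that $*$ is induced by a symmetry of the presentation itself, while yours avoids constructing anything on $K\X$ and reduces the whole verification to the already-stated Universal Property --- a more reusable pattern, and indeed the same device the paper itself employs later in the proof of Proposition \ref{propcov}.
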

\begin{proof}
\begin{enumerate}[(i)]
\item Follows from the fact that the elements of $H^0$ are mutually orthogonal idempotents in $L_K(H)$ such that $\sum\limits_{v\in H^0}vL_K(H)=\sum\limits_{v\in H^0}L_K(H)v=L_K(H)$.
\item Set $X:=\{v,h_{ij},h_{ij}^*\mid v\in H^0, h\in H^1, 1\leq i\leq |s(h)|,  1\leq j\leq |r(h)|\}$ and let $K\X$ denote the free $K$-algebra generated by $X$. Clearly there is a uniquely determined involution $*$ on $K\X$ mapping $k\mapsto k$, $v\mapsto v$, $h_{ij}\mapsto h_{ij}^*$ and $h_{ij}^*\mapsto h_{ij}$ for any $k\in K$, $v\in H^0$, $h\in H^1$, $1\leq i\leq |s(h)|$ and $1\leq j \leq |r(h)|$. Since the  set of the relations (i)-(iv) in Definition \ref{defhlpa} is clearly invariant under $*$, the involution $*$ induces an involution on $L_K(H)$.
\item Let $K\X$ be defined as above. Clearly there is a $\mathbb Z^n$-grading on $K\X$ defined by $\deg(v):=0$, $\deg(h_{ij}):=\alpha_i$ and $\deg(h_{ij}^*):=-\alpha_i$ for any $v\in E^0$, $h \in E^{1}$, $1\leq i\leq |s(h)|$ and $1\leq j \leq |r(h)|$. Since the relations (i)-(iv) in Definition \ref{defhlpa} are clearly homogeneous, the $\mathbb Z^n$-grading on $K\X$ induces a $\mathbb Z^n$-grading on $L_K(H)$. 
\end{enumerate}
\end{proof}

\begin{remark}
In the following we will refer to the grading defined in Proposition \ref{propprop} (iii) as the {\it standard grading} of $L_K(H)$. The isomorphisms $L_K(E,C)\cong L_K(H)$ and $L_K(E,w)\cong L_K(H)$ in Examples \ref{ex3} and \ref{ex4} are graded isomorphisms with respect to the standard gradings of $L_K(E,C)$, $L_K(E,w)$ and $L_K(H)$ (see \cite[Remark 2.13]{aragoodearl} and \cite[Proposition 5.7]{hazrat13}). 
\end{remark}

\section{Direct limits}
If $X$ and $Y$ are sets, $f:X\rightarrow Y$ is a map and $M$ is a multiset over $X$, then we define $f(M)$ as the multiset over $Y$ such that $f(M)(y)=\sum\limits_{x\in f^{-1}(y)}M(x)$. Note that if $M\in \Mult(X)$, then $|M|=|f(M)|$ and $|M_{\supp}|\leq |f(M)_{\supp}|$.
\begin{definition}
Let $H$ and $I$ be hypergraphs. A {\it hypergraph homomorphism} $\phi:H\rightarrow I$ consists of two maps $\phi^0:H^0\rightarrow I^0$ and $\phi^1:H^1\rightarrow I^1$ such that $s(\phi^1(h))=\phi^0(s(h))$ and $r(\phi^1(h))=\phi^0(r(h))$ for any $h\in H^1$. We denote by $\Hy$ the category whose objects are all hypergraphs and whose morphisms are all hypergraph homomorphisms. 
\end{definition}

\begin{proposition}
The category $\Hy$ admits arbitrary direct limits.
\end{proposition}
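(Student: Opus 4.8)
The plan is to construct direct limits explicitly rather than appeal to a general categorical existence theorem, since the category $\Hy$ is concrete and the constraints on morphisms are manageable. Let $(H_\lambda)_{\lambda\in\Lambda}$ be a directed system of hypergraphs indexed by a directed poset $\Lambda$, with connecting morphisms $\phi_{\mu\lambda}:H_\lambda\to H_\mu$ for $\lambda\leq\mu$ satisfying the usual compatibility. My intended construction is to build the vertex and hyperedge sets of the limit as ordinary direct limits of the component maps: set $H^0:=\varinjlim H_\lambda^0$ and $H^1:=\varinjlim H_\lambda^1$ in the category of sets, i.e. disjoint unions modulo the equivalence relation identifying elements that agree eventually along the system. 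Write $\iota_\lambda^0:H_\lambda^0\to H^0$ and $\iota_\lambda^1:H_\lambda^1\to H^1$ for the canonical maps into these set-theoretic limits.

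The first substantive step is to define the source and range maps $s,r:H^1\to\Mult(H^0)$ on the limit. Given a class $[h]\in H^1$ represented by some $h\in H_\lambda^1$, I would set $s([h]):=\iota_\lambda^0(s_\lambda(h))$, using the push-forward of multisets defined at the start of Section~4, and similarly for $r$. Here I must check two things: first, that this is well-defined, i.e. independent of the representative $h$ and the index $\lambda$, which follows from the defining condition $s(\phi_{\mu\lambda}^1(h))=\phi_{\mu\lambda}^0(s_\lambda(h))$ on morphisms together with functoriality of the multiset push-forward; and second, that $s([h])$ genuinely lies in $\Mult(H^0)$, i.e. has finite nonempty support. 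The equalities $|M|=|f(M)|$ and $|M_{\supp}|\leq|f(M)_{\supp}|$ recorded just before the Definition show that $|s([h])|=|s_\lambda(h)|<\infty$, so the support stays finite and nonempty; this is exactly why those two observations were isolated. This defines a hypergraph $H=(H^0,H^1,s,r)$, and the maps $\iota_\lambda=(\iota_\lambda^0,\iota_\lambda^1)$ are hypergraph homomorphisms into it essentially by construction of $s$ and $r$.

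The remaining work is to verify the universal property. Suppose $G$ is a hypergraph equipped with a compatible cocone $\psi_\lambda:H_\lambda\to G$. On the underlying sets, the universal properties of $\varinjlim H_\lambda^0$ and $\varinjlim H_\lambda^1$ in $\mathbf{Set}$ yield unique maps $\psi^0:H^0\to G^0$ and $\psi^1:H^1\to G^1$ factoring the $\psi_\lambda^0$ and $\psi_\lambda^1$. The only genuine thing to confirm is that the resulting pair $\psi=(\psi^0,\psi^1)$ is a hypergraph homomorphism, i.e. that $s_G(\psi^1([h]))=\psi^0(s([h]))$ and likewise for $r$. I expect this to reduce, for a representative $h\in H_\lambda^1$, to a chain of equalities obtained by push-forward functoriality: $s_G(\psi_\lambda^1(h))=\psi_\lambda^0(s_\lambda(h))=\psi^0(\iota_\lambda^0(s_\lambda(h)))=\psi^0(s([h]))$, where the first equality is because $\psi_\lambda$ is a morphism and the middle one is the factorization $\psi^0\circ\iota_\lambda^0=\psi_\lambda^0$ applied to a multiset. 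Uniqueness of $\psi$ is inherited from uniqueness at the level of sets.

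I anticipate the main obstacle to be entirely bookkeeping rather than conceptual: the push-forward operation $f(M)$ on multisets must be shown to be functorial, i.e. $(g\circ f)(M)=g(f(M))$, and to interact correctly with the equivalence relations defining the set-level colimits, so that all the ``apply $\psi^0$ to a multiset'' steps are legitimate. Once that functoriality lemma is in hand the verification is routine. No hard analysis is needed—the finiteness of $|s_\lambda(h)|$ guarantees the limit multisets stay within $\Mult(H^0)$, so nothing escapes the category.
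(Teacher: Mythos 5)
Your proposal is correct and follows essentially the same route as the paper: the vertex and hyperedge sets of the limit are taken as set-theoretic direct limits, the source and range of a class $[h]$ are defined as the push-forward of $s_\lambda(h)$ and $r_\lambda(h)$ along the canonical maps (the paper writes this push-forward out elementwise as $\{[(s_{H_i}(h)_1,i)],\dots\}$), and the finiteness observations $|M|=|f(M)|$, $|M_{\supp}|\leq|f(M)_{\supp}|$ keep everything inside $\Mult(H^0)$. The only difference is one of exposition: the paper dismisses well-definedness, the homomorphism property of the canonical maps, and the universal property as routine, whereas you spell out the functoriality of the multiset push-forward on which those verifications rest — a lemma that is indeed true and straightforward.
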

\begin{proof}
Let $\{H_i,\phi_{ij}\mid i,j\in I, i\leq j\}$ be a direct system in $\Hy$. For any $l\in \{0,1\}$ we define $H^l$ as the direct limit of the $H^l_i$'s in the category of sets. We identify $H^l$ with the set $\bigsqcup\limits_{i\in I}H_i^l/\hspace{-0.12cm}\sim_l$ where $(x,i)\sim_l(y,j)\Leftrightarrow~\exists k\in I: \phi_{ik}^l(x)=\phi_{jk}^l(y)$. For a $[(h,i)]\in H^1$ we set 
\[s_H([(h,i)]):=\{[(s_{H_i}(h)_1,i)],\dots,[(s_{H_i}(h)_{|s_{H_i}(h)|},i)]\}\]
and 
\[r_H([(h,i)]):=\{[(r_{H_i}(h)_1,i)],\dots,[(r_{H_i}(h)_{|r_{H_i}(h)|},i)]\}.\]
One checks easily that one gets well-defined maps $s_H,r_H:H^1\rightarrow \Mult(H^0)$. Set $H:=(H^0, H^1, s_H, r_H)$. For any $i\in I$ let $\phi^0_i:H^0_i\rightarrow H^0$ and $\phi^1_i:H^1_i\rightarrow H^1$ be the canonical maps and set $\phi_i:=(\phi_i^0, \phi_i^1)$. It is routine to check that $\phi_i:H_i\rightarrow H$ is a hypergraph homomorphism for any $i\in I$ and that $\{H,\phi_i\mid i\in I\}$ is a direct limit of the direct system $\{H_i,\phi_{ij}\mid i,j\in I, i\leq j\}$.
\end{proof}

\begin{definition}
Let $H$ and $I$ be a hypergraphs. Then $I$ is called a {\it subhypergraph} of $H$ if $I^0\subseteq H^0$, $I^1\subseteq H^1$ and $s_I$ and $r_I$ are the restrictions of $s_H$ resp. $r_H$ to $I^0$.
\end{definition}

\begin{proposition}\label{proplim}
Let $H$ be a hypergraph. Then $H$ is a direct limit of the direct system of all finite subhypergraphs of $H$.
\end{proposition}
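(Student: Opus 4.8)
The plan is to exhibit $H$, together with the inclusion maps $\iota_I\colon I\hookrightarrow H$ of its finite subhypergraphs $I$, as a direct limit of the system $\{I,\iota_{IJ}\mid I\subseteq J\}$ whose connecting morphisms $\iota_{IJ}\colon I\hookrightarrow J$ are the inclusions. First I would check that the index set $\Lambda$ of all finite subhypergraphs of $H$, partially ordered by inclusion, is directed: given $I,J\in\Lambda$, the quadruple $I\cup J$ with vertex set $I^0\cup J^0$, hyperedge set $I^1\cup J^1$, and source/range maps restricted from $H$ is again a finite subhypergraph of $H$ containing both $I$ and $J$. The inclusions $\iota_I$ clearly form a compatible cocone, i.e.\ $\iota_J\circ\iota_{IJ}=\iota_I$ whenever $I\subseteq J$.

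The key observation is that $H^0=\bigcup_{I\in\Lambda}I^0$ and $H^1=\bigcup_{I\in\Lambda}I^1$. Every vertex $v$ lies in the finite subhypergraph with vertex set $\{v\}$ and no hyperedges, and every hyperedge $h$ lies in the finite subhypergraph $I_h$ with $I_h^1=\{h\}$ and $I_h^0=s(h)_{\supp}\cup r(h)_{\supp}$. This last step is exactly where the hypergraph axioms are used: $s(h)$ and $r(h)$ are multisets with finite support, so $I_h^0$ is finite and $I_h$ is a bona fide finite subhypergraph.

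Now I would invoke the explicit description of direct limits from the proof of the previous proposition. Because every connecting map is injective and lands in $H$, we have $(x,I)\sim_l(y,J)$ precisely when $x=y$ as elements of $H^l$; hence the colimit sets $\varinjlim I^0$ and $\varinjlim I^1$ are canonically the unions $\bigcup_I I^0=H^0$ and $\bigcup_I I^1=H^1$. Under this identification the source and range maps constructed in the previous proposition send $h\in H^1$ to $s(h)$ and $r(h)$, since these are already computed inside any $I\ni h$ and the $s_I,r_I$ are restrictions of $s,r$. Thus the direct limit built in the previous proposition is literally $H$ with the inclusion maps.

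I expect the only point requiring care to be the reconciliation in the last paragraph, namely verifying that the source and range maps induced on the colimit coincide with $s$ and $r$; but this is routine once one notes that the connecting maps are injective, so no genuine identifications occur. Alternatively, one could bypass the explicit construction and verify the universal property directly: any compatible family $\psi_I\colon I\to G$ glues to a unique hypergraph homomorphism $\psi\colon H\to G$ by setting $\psi(x)=\psi_I(x)$ for any $I$ containing $x$, well-definedness and the homomorphism property following from the compatibility of the $\psi_I$ together with the covering property established above.
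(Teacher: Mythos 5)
Your proposal is correct and is precisely the routine argument the paper leaves implicit by declaring the proof ``Straightforward'': you verify directedness via unions, the covering property (correctly isolating the finite-support axiom as what makes $I_h$ a finite subhypergraph), and then identify the colimit from the preceding proposition with $H$ itself since all connecting maps are inclusions. Nothing in your write-up deviates from the intended proof; the paper simply omits these details.
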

\begin{proof}
Straightforward.
\end{proof}

\begin{definition}
In Definition \ref{defhlpa} we associated to any hypergraph $H$ a $K$-algebra $L_K(H)$. If $\phi:H\rightarrow I$ is a morphism in $\Hy$, then there is a unique $K$-algebra homomorphism $L_K(\phi):L_K(H)\rightarrow L_K(I)$ such that $L_K(\phi)(v)=\phi^0(v)$, $L_K(\phi)(h_{ij})=(\phi^1(h))_{ij}$ and $L_K(\phi)(h_{ij}^*)=(\phi^1(h))_{ij}^*$ for any $v\in H^0$, $h\in H^1$, $1\leq i\leq |s(h)|$ and $1\leq j \leq |r(h)|$ (follows from the Universal Property of $L_K(H)$, see Remark \ref{remhlpa} (c)). Let $\A$ denote the category of $K$-algebras with local units. One checks easily that $L_K:\Hy \rightarrow \A$ is a functor that commutes with direct limits.
\end{definition}

\begin{proposition}\label{proplim2}
Let $H$ be a hypergraph. Then $L_K(H)$ is a direct limit of a direct system of Leavitt path algebras of finite hypergraphs.
\end{proposition}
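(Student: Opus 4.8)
The plan is to derive the statement at once from the two structural facts established immediately before it, so that the proof is essentially a one-line composition. First I would invoke Proposition \ref{proplim}, which exhibits $H$ as the direct limit of the direct system $\{H_i,\phi_{ij}\mid i,j\in I,\ i\leq j\}$ formed by all finite subhypergraphs of $H$, the connecting morphisms being the inclusion hypergraph homomorphisms. The relevant observation is that every object $H_i$ of this system is a finite hypergraph, so that its Leavitt path algebra $L_K(H_i)$ is, by Definition \ref{defhlpa}, a Leavitt path algebra of a finite hypergraph.

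Next I would apply the functor $L_K:\Hy\to\A$ to this system. Since $L_K$ was shown to commute with direct limits (in the Definition preceding the present Proposition), it carries the presentation $H\cong\varinjlim H_i$ to a canonical isomorphism $L_K(H)\cong\varinjlim L_K(H_i)$, where the new connecting maps are the algebra homomorphisms $L_K(\phi_{ij})$. This realises $L_K(H)$ as the direct limit of the direct system $\{L_K(H_i),L_K(\phi_{ij})\}$, each term of which is a Leavitt path algebra of a finite hypergraph, which is exactly the assertion.

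Because both ingredients are already in hand, I expect no genuine obstacle here: the argument is a formal consequence of functoriality. The only point deserving a moment's care is the elementary bookkeeping remark that a finite subhypergraph of $H$ is in particular a finite hypergraph, which guarantees that the terms of the resulting direct system are of the required kind.
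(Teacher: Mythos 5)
Your proposal is correct and is precisely the paper's own argument: the paper's proof reads ``Follows from Proposition \ref{proplim} and the fact that $L_K$ commutes with direct limits,'' and you have simply spelled out the same composition of these two facts in detail.
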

\begin{proof}
Follows from Proposition \ref{proplim} and the fact that $L_K$ commutes with direct limits.
\end{proof}

\section{Linear bases}
Throughout this section $H$ denotes a hypergraph. Our goal is to find a basis for the $K$-vector space $L_K(H)$. We denote the directed graph associated to $H$ by $E$ and the double graph of $E$ by $\hat E$ (see Remark \ref{remhlpa} (b)). We set $X:=\hat E^0\cup \hat E^1$, $\X:=\{\text{nonempty words over }X\}$ and $\overline{\X}:=\X\cup\{\text{empty word}\}$. Together with juxtaposition $\X$ is a semigroup and $\overline{\X}$ a monoid. If $A,B\in \overline{\X}$, then we call $B$ a {\it subword} of $A$ if there are $C,D\in\overline{\X}$ such that $A=CBD$. We denote by $K\X$ the free $K$-algebra generated by $X$ (i.e. $K\X$ is the $K$-vector space with basis $\X$ which becomes a $K$-algebra by linear extending the juxtaposition of words). Note that $L_K(H)$ is the quotient of $K\X$ by the relations (i)-(iv) in Definition \ref{defhlpa}. 

\begin{definition}\label{defpath}
Let $G=(G^0,G^1,s, r)$ be a directed graph. A {\it path} in $G$ is a nonempty word $p=x_1\dots x_n$ over the alphabet $G^0\cup G^1$ such that either $x_i\in G^1~(i=1,\dots,n)$ and $r(x_i)=s(x_{i+1})~(i=1,\dots,n-1)$ or $n=1$ and $x_1\in G^0$. By definition, the {\it length} $|p|$ of $p$ is $n$ in the first case and $0$ in the latter case. We set $s(p):=s(x_1)$ and $r(p):=r(x_n)$ (here we use the convention $s(v)=v=r(v)$ for any $v\in G^0$). 
\end{definition}

We call a path in $\hat E$ a {\it d-path}. While the d-paths form a basis for the path algebra $K \hat E$, a basis for the Leavitt path algebra $L_K(H)$ is formed by the nod-paths, which we will define next. 

\begin{definition}\label{defnod}
The words
\[h_{i1}h_{j1}^*~(h\in H^1,1\leq i,j\leq |s(h)|)\text{ and }h_{1i}^*h_{1j}~(h\in H^1,1\leq i,j\leq |r(h)|)\]
in $\X$ are called {\it forbidden}. A {\it normal d-path} or {\it nod-path} is a d-path such that none of its subwords is forbidden. An element of $K\X$ is called {\it normal} if it lies in the linear span $K\X_{\nod}$ of all nod-paths.
\end{definition}

\begin{theorem}\label{thmbasis}
Any element of $L_K(H)$ has precisely one normal representative. Moreover, the map $\NF:L_K(H)\rightarrow K\X_{\nod}$ which associates to each element of $L_K(H)$ its normal representative is an isomorphism of $K$-vector spaces.
\end{theorem}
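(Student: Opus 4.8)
The plan is to deduce the statement from Bergman's Diamond Lemma applied to the free algebra $K\X$ equipped with the rewriting system induced by the defining relations. Recall that $L_K(H)=K\X/I$, where $I$ is the ideal generated by (i)--(iv) of Definition \ref{defhlpa}. I introduce a reduction system $S$ on $K\X$ consisting of the \emph{vertex reductions} $uv\mapsto\delta_{uv}u$ (for $u,v\in\hat E^0$), $vx\mapsto\delta_{v,s'(x)}x$ and $xv\mapsto\delta_{r'(x),v}x$ (for $v\in\hat E^0$, $x\in\hat E^1$), together with the \emph{Cuntz--Krieger reductions} coming from (iii) and (iv), namely $h_{i1}h_{j1}^*\mapsto\delta_{ij}s(h)_i-\sum_{k\geq 2}h_{ik}h_{jk}^*$ and $h_{1i}^*h_{1j}\mapsto\delta_{ij}r(h)_i-\sum_{k\geq 2}h_{ki}^*h_{kj}$. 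One checks that $S$ generates $I$, and that a word in $\X$ is $S$-irreducible precisely when it has no two adjacent vertices and no vertex--edge or edge--vertex reducible pair (so it is a d-path) and contains no forbidden subword; thus the $S$-irreducible words are exactly the nod-paths. Consequently, once the hypotheses of the Diamond Lemma are verified, the images of the nod-paths form a $K$-basis of $K\X/I=L_K(H)$, which yields both assertions at once.

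The first hypothesis is a compatible well-founded order. I would compare words first by length (which strictly drops under every vertex reduction) and, among words of equal length, by the weight $\sigma(w)=\sum_{\ell}d(\ell)$ where $d(h_{ab})=b$, $d(h_{ab}^*)=a$ and $d(v)=0$ for $v\in\hat E^0$; this weight strictly increases under the Cuntz--Krieger reductions, since there an index $1$ is replaced by some $k\geq 2$, so I order words of equal length by decreasing $\sigma$. This is a semigroup partial order and it satisfies the descending chain condition: every reduction sequence issuing from a fixed word only involves the finitely many hyperedges occurring in it and only letters whose indices are bounded by the corresponding $|s(h)|$ and $|r(h)|$, so length and $\sigma$ take finitely many values along the sequence. (Alternatively, by Proposition \ref{proplim2} one may first treat finite $H$, where the alphabet is finite and the descending chain condition is immediate, and then pass to the direct limit, since nod-paths and the maps $\NF$ are compatible with subhypergraph inclusions.) Since each rule rewrites its left-hand side as a combination of strictly smaller words, every element of $L_K(H)$ admits a normal representative; equivalently the restriction of the quotient map $K\X_{\nod}\to L_K(H)$ is surjective, which settles the existence half.

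The second, and decisive, hypothesis is resolvability of all ambiguities, and this is where I expect the real work to lie. The vertex reductions only shorten words, so their overlaps and inclusions resolve trivially, as do the inclusion ambiguities created when a Cuntz--Krieger reduction produces an idempotent $s(h)_i$ or $r(h)_i$ adjacent to an edge. Neither rule family overlaps itself (a rule-(iii) left side is edge$\cdot$ghost and a rule-(iv) left side is ghost$\cdot$edge, so no suffix of one matches a prefix of another of the same type). The only genuine overlaps are the cross-overlaps, which force the shared middle letter to be $h_{11}^*$ or $h_{11}$, giving the two length-three ambiguities $h_{i1}h_{11}^*h_{1m}$ and $h_{1i}^*h_{11}h_{m1}^*$. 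The main obstacle is to show that reducing the left pair first (via (iii)) and the right pair first (via (iv)) yield the same normal element; I expect this to follow from a direct computation that combines relations (iii) and (iv) and re-indexes the resulting double sums, and this is exactly the point at which the mutual compatibility of the two Cuntz--Krieger relation families is used.

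With the descending chain condition in hand and all ambiguities shown to be resolvable, the Diamond Lemma gives that the images of the nod-paths form a $K$-basis of $L_K(H)$. Hence every element of $L_K(H)$ has a unique normal representative, and the assignment $\NF$ sending an element to that representative is a well-defined $K$-linear bijection $L_K(H)\to K\X_{\nod}$, inverse to the linear map induced by restricting the quotient map to $K\X_{\nod}$. In particular $\NF$ is an isomorphism of $K$-vector spaces, as claimed.
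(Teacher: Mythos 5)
Your overall strategy coincides with the paper's: both set up a reduction system on $K\X$ whose irreducible words are meant to be the nod-paths, equip $\X$ with a compatible partial order with DCC, and invoke a Diamond-Lemma-type result (the paper uses \cite[Theorem 15]{hazrat-preusser} together with \cite[Lemma 1.1]{bergman78}). However, your reduction system has a genuine gap: it omits the orthogonality reductions for non-composable edge--edge pairs, i.e.\ the rules $g_{ij}h_{kl}\mapsto 0$ if $r(g)_j\neq s(h)_k$, $g_{ij}^*h_{kl}\mapsto 0$ if $s(g)_i\neq s(h)_k$, $g_{ij}h_{kl}^*\mapsto 0$ if $r(g)_j\neq r(h)_l$, and $g_{ij}^*h_{kl}^*\mapsto 0$ if $s(g)_i\neq r(h)_l$ (the paper's family (iii')). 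With only your rules, a word such as $g_{ij}h_{kl}$ with $r(g)_j\neq s(h)_k$ contains no vertex and matches no Cuntz--Krieger left-hand side, so it is $S$-irreducible --- yet it lies in the ideal $I$ (since $g_{ij}h_{kl}=g_{ij}\,r(g)_j\,s(h)_k\,h_{kl}$ modulo $I$). Hence your claim that the $S$-irreducible words are exactly the nod-paths is false, and your assertion that the vertex-rule overlaps ``resolve trivially'' fails concretely: for the overlap word $g_{ij}\,v\,h_{kl}$ with $v=r'(g_{ij})\neq s'(h_{kl})$, reducing $g_{ij}v$ first yields the irreducible nonzero word $g_{ij}h_{kl}$, while reducing $vh_{kl}$ first yields $0$. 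So the ambiguities of your system are \emph{not} all resolvable, and the Diamond Lemma cannot be applied to it; both conclusions of the theorem break down at this point. The repair is exactly to enlarge $S$ by the four orthogonality families, after which the irreducible words are the nod-paths and the new overlaps (e.g.\ $g_{kl}h_{i1}h_{j1}^*$ between an orthogonality rule and a Cuntz--Krieger rule) must also be checked --- the paper's ambiguity table records all of these.

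A second, smaller deficiency: you correctly identify the decisive cross-overlaps $h_{i1}h_{11}^*h_{1j}$ and $h_{1i}^*h_{11}h_{j1}^*$, but you only state that you ``expect'' their resolvability to follow from re-indexing double sums. That computation is the heart of the paper's proof; it is carried out there explicitly, with both reduction orders terminating in the same normal form via the identity $\sum_{k=1}^{|r(h)|}\delta_{kj}h_{ik}=h_{ij}=\sum_{k=1}^{|s(h)|}\delta_{ik}h_{kj}$, and it is not purely formal --- it is precisely where the compatibility of relations (iii) and (iv) enters. On the positive side, your ordering (length first, then decreasing index-weight $\sigma$) is a legitimate alternative to the paper's order, which instead counts occurrences of forbidden subwords in all contexts, and your compatibility and DCC checks for it are sound; but as written the proposal does not constitute a proof until the missing orthogonality rules are added and the cross-ambiguities are actually resolved.
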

\begin{proof}
In order to be able to apply \cite[Theorem 15]{hazrat-preusser}, we replace the relations (i)-(iv) in Definition~\ref{defhlpa} by the relations (i')-(v') below.
\begin{enumerate}[(i')]
\item For any $v,w \in H^0$, \[vw = \delta_{vw}v.\]
\medskip
\item For any $v \in H^0$, $h\in H^1$, $1\leq i \leq |s(h)|$ and $1\leq j \leq |r(h)|$,
\begin{align*}
vh_{ij}&=\delta_{vs(h)_i}h_{ij},\\
h_{ij}v&=\delta_{vr(h)_j}h_{ij},\\
vh_{ij}^*&=\delta_{vr(h)_j}h_{ij}^* \text{ and}\\
h_{ij}^*v&=\delta_{vs(h)_i}h_{ij}^*.
\end{align*}
\medskip 
\item For any $g,h\in H^1$, $1\leq i \leq |s(g)|$, $1\leq j \leq |r(g)|$, $1\leq k \leq |s(h)|$ and $1\leq l \leq |r(h)|$,
\begin{align*}
g_{ij}h_{kl}&=0\text{ if  } r(g)_j\neq s(h)_k,\\
g_{ij}^*h_{kl}&=0\text{ if  } s(g)_i\neq s(h)_k,\\
g_{ij}h_{kl}^*&=0\text{ if  } r(g)_j\neq r(h)_l\text{ and}\\
g_{ij}^*h_{kl}^*&=0\text{ if  } s(g)_i\neq r(h)_l.
\end{align*}
\medskip 
\item For all $h\in H^1$ and $1\leq i,j\leq |s(h)|$, 
\[h_{i1}h_{j1}^*= \delta_{ij}s(h)_i-\sum\limits_{k=2}^{|r(h)|}h_{ik}h_{jk}^*.\]
\medskip 
\item For all $h\in H^1$ and $1\leq i,j\leq |r(h)|$, 
\[h_{1i}^*h_{1j}= \delta_{ij}r(h)_i-\sum\limits_{k=2}^{|s(h)|}h_{ki}^*h_{kj}.\]
\end{enumerate}
Clearly the relations (i')-(v') above generate the same ideal $I$ of $K\X$ as the relations (i)-(iv) in Definition~\ref{defhlpa}. Denote by $S$ the reduction system for $K\X$ defined by the relations (1')-(5') (i.e. $S$ is the set of all pairs $\sigma=(W_\sigma,f_\sigma)$ where $W_\sigma$ equals the left hand side of an equation in (i')-(v') and $f_\sigma$ the corresponding right hand side).\\
For any $A=x_1\dots x_n\in \X$ set $l(A):=n$ and $m(A):= \big |\{i\in\{1,\dots,n-1\}|x_ix_{i+1} \text{ is forbidden}\} \big |$. Define a partial ordering $\leq$ on $\X$ by 
\[A\leq B\Leftrightarrow \big [A=B\big ]~\lor~\big[l(A)<l(B)\big]~\lor \big[l(A)=l(B)~\land~ \forall C,D\in \overline\X:m(CAD)<m(CBD)\big].\]
Clearly $\leq$ is a semigroup partial ordering on $\X$ compatible with $S$ and the descending chain condition is satisfied.\\
It remains to show that all ambiguities of $S$ are resolvable. In the table below we list all types of ambiguities which may occur.
\begin{table}[!htbp]
\resizebox{17.7cm}{!}{
\begin{tabular}{|c||c|c|c|c|c|}
\hline &(1')&(2')&(3')&(4')&(5')\\
\hhline{|=#=|=|=|=|=|} (1')&$uvw$&$vwh_{ij}$,$vwh_{ij}^*$ &-&-&-\\
\hline (2')&$h_{ij}vw$, $h_{ij}^*vw$&$vh_{ij}w$, $vh_{ij}^*w$, $g_{ij}vh_{kl}$ etc.&$vg_{ij}h_{kl}$, $vg_{ij}h_{kl}^*$ etc.&$vh_{i1}h_{j1}^*$&$vh_{1i}^*h_{1j}$\\
\hline (3')&-&$g_{ij}h_{kl}v$, $g_{ij}^*h_{kl}v$ etc.&$f_{ij}g_{kl}h_{pq}$, $f_{ij}g_{kl}h_{pq}^*$ etc.&$g_{kl}h_{i1}h_{j1}^*$, $g_{kl}^*h_{i1}h_{j1}^*$&$g_{kl}h_{1i}^*h_{1j}$, $g_{kl}^*h_{1i}^*h_{1j}$\\
\hline (4')&-&$h_{i1}h_{j1}^*v$&$h_{i1}h_{j1}^*g_{kl}$, $h_{i1}h_{j1}^*g_{kl}^*$&-&$h_{i1}h_{11}^*h_{1j}$\\
\hline (5')&-&$h_{1i}^*h_{1j}v$&$h_{1i}^*h_{1j}g_{kl}$, $h_{1i}^*h_{1j}g_{kl}^*$&$h_{1i}^*h_{11}h_{j1}^*$&-\\
\hline
\end{tabular}
}
\end{table}\\
Note that there are no inclusion ambiguities. The (4')-(5') and (5')-(4') ambiguities $h_{i1}h_{11}^*h_{1j}$ and $h_{1i}^*h_{11}h_{j1}^*$ are the ones which are most difficult to resolve. We show how to resolve the ambiguity $h_{i1}h_{11}^*h_{1j}$ (where $h\in H^1$, $1\leq i \leq |s(h)|$ and $1\leq j \leq |r(h)|$) and leave the other ambiguities to the reader.
\xymatrixcolsep{-7pc}
\xymatrixrowsep{4pc}
\[\xymatrix{
&h_{i1}h_{11}^*h_{1j}\ar[rd]^-{(5')}\ar[ld]_-{(4')}&
\\
{\begin{array}{cc}&(\delta_{i1}s(h)_i-\sum\limits_{k=2}^{|r(h)|}h_{ik}h_{1k}^*)h_{1j}\\=&\delta_{i1}s(h)_ih_{1j}-\sum\limits_{k=2}^{|r(h)|}h_{ik}h_{1k}^*h_{1j}\end{array}}\ar[d]_-{(5')}
&&
{\begin{array}{cc}&h_{i1}(\delta_{1j}r(h)_1-\sum\limits_{k=2}^{|s(h)|}h_{k1}^*h_{kj})\\=&\delta_{1j}h_{i1}r(h)_1-\sum\limits_{k=2}^{|s(h)|}h_{i1}h_{k1}^*h_{kj}\end{array}\ar[d]^-{(4')}}
\\
{\begin{array}{cc}&\delta_{i1}s(h)_ih_{1j}\\&-\sum\limits_{k=2}^{|r(h)|}h_{ik}(\delta_{kj}r(h)_k-\sum\limits_{l=2}^{|s(h)|}h^*_{lk}h_{lj})\\=&\delta_{i1}s(h)_ih_{1j}-\sum\limits_{k=2}^{|r(h)|}\delta_{kj}h_{ik}r(h)_k\\&+\sum\limits_{k=2}^{|r(h)|}\sum\limits_{l=2}^{|s(h)|}h_{ik}h^*_{lk}h_{lj}\end{array}}\ar[rd]_-{(2')}
&&
{\begin{array}{cc}&\delta_{1j}h_{i1}r(h)_1\\&-\sum\limits_{k=2}^{|s(h)|}(\delta_{ik}s(h)_i-\sum\limits_{l=2}^{|r(h)|}h_{il}h_{kl}^*)h_{kj}\\=&\delta_{1j}h_{i1}r(h)_1-\sum\limits_{k=2}^{|s(h)|}\delta_{ik}s(h)_ih_{kj}\\&+\sum\limits_{k=2}^{|s(h)|}\sum\limits_{l=2}^{|r(h)|}h_{il}h_{kl}^*h_{kj}\end{array}}\ar[ld]^-{(2')}
\\
&{\begin{array}{cc}&\delta_{i1}h_{1j}-\sum\limits_{k=2}^{|r(h)|}\delta_{kj}h_{ik}+\sum\limits_{k=2}^{|r(h)|}\sum\limits_{l=2}^{|s(h)|}h_{ik}h^*_{lk}h_{lj}\\=&\delta_{1j}h_{i1}-\sum\limits_{k=2}^{|s(h)|}\delta_{ik}h_{kj}+\sum\limits_{k=2}^{|s(h)|}\sum\limits_{l=2}^{|r(h)|}h_{il}h_{kl}^*h_{kj}\end{array}}&}
\]
(note that $\sum\limits_{k=1}^{|r(h)|}\delta_{kj}h_{ik}=h_{ij}=\sum\limits_{k=1}^{|s(h)|}\delta_{ik}h_{kj}$).
It follows from \cite[Theorem 15]{hazrat-preusser}, that $K\X_{\irr}$ is a set of representatives for the elements of $K\X/I=L_K(H)$. Clearly $K\X_{\irr}=K\X_{\nod}$.\\
Clearly the map $\NF:L_K(H)\rightarrow K\X_{\nod}$ which associates to each element of $L_K(H)$ its normal representative is bijective. That $\NF$ is linear follows from \cite[Lemma 1.1]{bergman78} (note that $\NF=r_S$). 
\end{proof}

\begin{corollary}\label{corbasis}
The images of the nod-paths in $L_K(H)$ form a basis of the $K$-vector space $L_K(H)$.
\end{corollary}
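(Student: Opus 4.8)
The plan is to obtain the corollary as a formal consequence of Theorem~\ref{thmbasis}; the substantive work of exhibiting the normal form and proving its uniqueness has already been carried out there, so what remains is a short basis-transport argument.

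First I would note that the nod-paths form a $K$-basis of the subspace $K\X_{\nod}$ of $K\X$. By Definition~\ref{defnod} they span $K\X_{\nod}$ by construction, and since distinct nod-paths are distinct elements of the word basis $\X$ of the free algebra $K\X$, they are linearly independent in $K\X$ and a fortiori in $K\X_{\nod}$; hence they constitute a basis of $K\X_{\nod}$.

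Next I would identify the inverse of $\NF$ with the (restricted) quotient map. Write $\pi\colon K\X\to L_K(H)$ for the canonical surjection and $\iota\colon K\X_{\nod}\hookrightarrow K\X$ for the inclusion. For a nod-path $p$, the element $\pi(\iota(p))\in L_K(H)$ has $p$ as a normal representative, so by the uniqueness asserted in Theorem~\ref{thmbasis} we get $\NF(\pi(\iota(p)))=p$; symmetrically, $\pi(\iota(\NF(a)))=a$ for every $a\in L_K(H)$ since $\NF(a)$ is by construction a representative of $a$. Thus $\pi\circ\iota$ is the two-sided inverse of $\NF$, and in particular $\NF^{-1}(p)=\pi(p)$ is exactly the image of the nod-path $p$ in $L_K(H)$.

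Finally I would invoke the fact that a linear isomorphism carries a basis to a basis: since $\NF$ (equivalently its inverse $\pi\circ\iota$) is an isomorphism of $K$-vector spaces by Theorem~\ref{thmbasis}, it sends the basis of nod-paths of $K\X_{\nod}$ to the set $\{\pi(p)\mid p \text{ a nod-path}\}$, which is therefore a basis of $L_K(H)$; this is precisely the asserted statement. I expect no genuine obstacle here, as the theorem has done the real work; the only point requiring a moment's attention is the explicit identification $\NF^{-1}(p)=\pi(p)$, which rests squarely on the uniqueness of normal representatives supplied by the theorem.
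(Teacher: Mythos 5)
Your proposal is correct and follows essentially the same route as the paper: the paper's proof likewise observes that the inverse of $\NF$ is the map $K\X_{\nod}\rightarrow L_K(H)$ induced by the inclusion $K\X_{\nod}\hookrightarrow K\X$ (your $\pi\circ\iota$) and then transports the basis of nod-paths through this isomorphism. Your explicit verification that $\pi\circ\iota$ is a two-sided inverse via uniqueness of normal representatives merely spells out a step the paper leaves implicit.
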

\begin{proof}
By Theorem \ref{thmbasis}, the map $\NF:L_K(H)\rightarrow K\X_{\nod}$ is an isomorphism of $K$-vector spaces. Its inverse is the map $K\X_{\nod}\rightarrow L_K(H)$ induced by the inclusion map $K\X_{\nod}\hookrightarrow K\X$. Since the nod-paths form a $K$-basis for $K\X_{\nod}$, the assertion of the corollary follows.
\end{proof}

\section{The Gelfand-Kirillov dimension}
First we want to recall some general facts on the growth of algebras. Let $A\neq\{0\}$ be a finitely generated $K$-algebra. Let $V$ be a {\it finite-dimensional generating subspace} of $A$, i.e. a finite-dimensional subspace of $A$ that generates $A$ as a $K$-algebra. For $n\geq 1$ let $V^n$ denote the linear span of the set $\{v_1\dots v_k\mid k\leq n, v_1,\dots,v_k\in V\}$. Then 
\[V =V^1\subseteq V^2\subseteq V^3\subseteq \dots, \quad A =\bigcup\limits_{n\in \N}V^n\text{ and }d_V(n):=\dim V^n<\infty.\] 
Given functions $f, g:\N\rightarrow \R^+$, we write $f\preccurlyeq g$ if there is a $c\in\N$ such that $f(n)\leq cg(cn)$ for all $n$. If $f\preccurlyeq g$ and $g\preccurlyeq f$, then the functions $f, g$ are called {\it asymptotically equivalent} and we write $f\sim g$. If $W$ is another finite-dimensional generating subspace of $A$, then $d_V\sim d_W$. The {\it Gelfand-Kirillov dimension} or {\it GK dimension} of $A$ is defined as
\[\GKdim A := \limsup\limits_{n\rightarrow \infty}\log_nd_V(n).\]
The definition of the GK dimension does not depend on the choice of the finite-dimensional generating subspace $V$. If $d_V\preccurlyeq n^m$ for some $m\in \N$, then $A$ is said to have {\it polynomial growth} and we have $\GKdim A \leq m$. If $d_V\sim a^n$ for some real number $a>1$, then $A$ is said to have {\it exponential growth} and we have $\GKdim A =\infty$. If $A$ does not happen to be finitely generated over $K$, then the GK dimension of $A$ is defined as
\[\GKdim(A) := \sup\{\GKdim(B)\mid B \text{ is a finitely generated subalgebra of }A\}.\]

\begin{definition}\label{deffnodc}
Let $H$ be a hypergraph. Let $p$ and $q$ be nod-paths. If there is a nod-path $o$ such that $p$ is not a prefix of $o$ and $poq$ is a nod-path, then we write $p\overset{\nod}{\Longrightarrow} q$. If $pq$ is a nod-path or $p\overset{\nod}{\Longrightarrow} q$, then we write $p\Longrightarrow q$.
\end{definition}


\begin{definition}\label{deffnod2}
Let $H$ be a hypergraph. A {\it nod$^2$-path} is a nod-path $p$ such that $p^2$ is a nod-path. A {\it quasi-cycle} is a nod$^2$-path $p$ such that none of the subwords of $p^2$ of length $<|p|$ is a nod$^2$-path. A quasi-cycle $p$ is called {\it selfconnected} if $p\overset{\nod}{\Longrightarrow}p$.
\end{definition}
 
\begin{remark}\label{remnod2}
$~$\vspace{-0.2cm}
\begin{enumerate}[(a)]
\item
Let $p=x_1\dots x_n$ be a quasi-cycle. Assume that $x_i=x_j$ for some $1\leq i <j\leq n$. Then we get the contradiction that $x_i\dots x_{j-1}$ is a nod$^2$-path of length $<n$. Hence $x_i\neq x_j$ for all $i\neq j$. It follows that there is only a finite number of quasi-cycles if $(E,w)$ is finite.
\item
If $x_1\dots x_n$ is a nonempty word over some alphabet, then we call the words $x_{m+1}\dots x_{n}x_1\dots x_{m}~(1\leq m\leq n)$ {\it shifts} of $x_1\dots x_n$. One checks easily that any shift of a quasi-cycle $p$ is a quasi-cycle (note that if $q$ is a shift of $p$, then any subword of $q^2$ of length $<|q|=|p|$ is also a subword of $p^2$). If $p$ and $q$ are quasi-cycles, then we write $p\approx q$ iff $q$ is a shift of $p$. Clearly $\approx$ is an equivalence relation on the set of all quasi-cycles.
\item 
Let $p=x_1\dots x_n$ be a quasi-cycle. Then $p^*:=x_n^*\dots x_1^*$ is a quasi-cycle.
\end{enumerate}
\end{remark}

The following lemma shows, that quasi-cycles behave like cycles in a way (one cannot ``take a shortcut").
\begin{lemma}\label{lemshc}
Let $H$ be a hypergraph, $p=x_1\dots x_n$ a quasi-cycle and $1\leq i,j\leq n$. Then $x_ix_j$ is a nod-path iff $i<n$ and $j=i+1$ or $i=n$ and $j=1$.
\end{lemma}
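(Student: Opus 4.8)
The plan is to prove both implications by exploiting the definition of a quasi-cycle, namely that $p^2 = x_1\dots x_n x_1 \dots x_n$ is a nod-path and that no subword of $p^2$ of length $<n$ is a nod$^2$-path. First I record the elementary facts used throughout. Since $p^2$ is a nod-path of positive length, every $x_k$ is an edge of $\hat E$; the matching condition $r(x_k)=s(x_{k+1})$ holds for $1\le k\le n-1$ and, reading the pair at positions $n,n+1$ of the d-path $p^2$, also $r(x_n)=s(x_1)$; and none of the consecutive pairs $x_k x_{k+1}$ nor the wrap-around pair $x_n x_1$ is forbidden. The crucial observation is that, because every forbidden word has length exactly $2$, a d-path is a nod-path precisely when none of its length-$2$ consecutive subwords is forbidden. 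Consequently, to check that a square $q^2$ of a cyclic concatenation is a nod-path it suffices to verify that every consecutive pair occurring in $q^2$ is a non-forbidden d-path.

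The ``if'' direction is immediate: if $i<n$ and $j=i+1$, or $i=n$ and $j=1$, then $x_i x_j$ is exactly one of the consecutive pairs occurring in $p^2$, hence a length-$2$ subword of the nod-path $p^2$, and therefore itself a nod-path.

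For the ``only if'' direction I assume $x_i x_j$ is a nod-path and, unless $(i,j)$ is one of the two admissible pairs, produce a nod$^2$-path that is a subword of $p^2$ of length $<n$, contradicting the minimality clause in the definition of a quasi-cycle. I split into three cases. If $i=j$ and $n>1$, then $x_i x_i$ is a nod-path, so $x_i$ is a nod$^2$-path of length $1<n$, a contradiction (when $n=1$ the pair $(1,1)$ is the admissible pair $i=n,\ j=1$). If $i>j$, I take $q:=x_j x_{j+1}\dots x_i$, a subword of $p$: its internal junctions all come from $p$ and the unique new junction in $q^2$ is $x_i x_j$, which by hypothesis is a non-forbidden d-path, so $q^2$ is a nod-path and $q$ is a nod$^2$-path of length $i-j+1$; this length is $<n$ unless $(i,j)=(n,1)$. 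If $i<j$, I take $w:=x_j x_{j+1}\dots x_n x_1\dots x_i$, the contiguous subword of $p^2$ running from position $j$ to position $n+i$: its internal junctions come from $p$ together with the $p^2$-wrap pair $x_n x_1$, and the unique new junction in $w^2$ is again $x_i x_j$, so $w^2$ is a nod-path and $w$ is a nod$^2$-path of length $n+i-j+1$; this length is $<n$ unless $j=i+1$. In every case the only alternative to a contradiction is precisely an admissible pair, which establishes the claim.

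I expect the main obstacle to be the verification that the auxiliary words $q$ and $w$ are genuine nod$^2$-paths. This reduces, via the length-$2$ remark above, to the bookkeeping that the only consecutive pair appearing in $q^2$ (resp. $w^2$) but not already in the nod-path $p^2$ is the junction $x_i x_j$, combined with the index arithmetic pinning down exactly when the resulting nod$^2$-path is strictly shorter than $p$. Once these are in place, the quasi-cycle minimality closes each case at once.
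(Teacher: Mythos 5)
Your proof is correct and takes essentially the same route as the paper's: the same three cases ($i=j$, $i<j$, $j<i$), with the same auxiliary subwords $x_j\dots x_i$ and $x_j\dots x_nx_1\dots x_i$ of $p^2$, each contradicting the minimality clause in the definition of a quasi-cycle unless $(i,j)$ is one of the two admissible pairs. The only difference is that you make explicit the junction bookkeeping verifying these subwords are nod$^2$-paths (using that forbidden words have length $2$), which the paper treats as immediate.
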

\begin{proof}
If $i<n$ and $j=i+1$ or $i=n$ and $j=1$, then clearly $x_ix_j$ is a nod-path. Suppose now that $x_ix_j$ is a nod-path. \\
\\
\underline{case 1} Suppose $i=j$. Assume that $n>1$. Then we get the contradiction that $x_i$ is a nod$^2$-path which is a subword of $p^2$ of length $1<|p|=n$. Hence $n=1$ and we have $i=j=1=n$.\\
\\
\underline{case 2} Suppose $i<j$. Then $x_j\dots x_nx_1\dots x_i$ is a nod$^2$-path which is a subword of $p^2$ of length $n-j+1+i$. It follows that $j=i+1$.\\
\\
\underline{case 3} Suppose $j<i$. Then $x_j\dots x_i$ is a nod$^2$-path which is a subword of $p^2$ of length $i-j+1$. It follows that $j=1$ and $i=n$.
\end{proof}

\begin{lemma}\label{lemexp}
Let $H$ be a finite hypergraph. If there is a selfconnected quasi-cycle $p$, then $L_K(H)$ has exponential growth.
\end{lemma}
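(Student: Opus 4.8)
The plan is to exhibit, inside $L_K(H)$, an exponentially growing family of linearly independent nod-paths obtained by freely concatenating two fixed loops. Since $p$ is a selfconnected quasi-cycle, Definitions \ref{deffnod2} and \ref{deffnodc} furnish a nod-path $o$ such that $pop$ is a nod-path and $p$ is not a prefix of $o$; moreover $p^2$ is a nod-path and $s(p)=r(p)$. Writing $p=x_1\cdots x_d$ (so $d=|p|$), note that $pop$ has length $\geq 2$, hence consists entirely of edges, so $o$ is a nonempty edge-path. I set $\alpha:=p$ and $\beta:=pop$ and consider the submonoid $M$ of $\X$ they generate.

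First I would check that every word in $M$ is a nod-path. Composability is automatic because both $\alpha$ and $\beta$ start and end at the vertex $s(p)=r(p)$. By Definition \ref{defnod} a d-path fails to be a nod-path only if it contains a forbidden subword, and forbidden words have length $2$; so it suffices to verify that every consecutive pair of letters occurring in a word of $M$ is non-forbidden. Such a pair either lies inside a single copy of $p$ or of $o$, or else is a block boundary. Every block begins with $x_1$ and ends with $x_d$, so each block boundary is the pair $(x_d,x_1)$, which occurs in the nod-path $p^2$; the two boundaries internal to $\beta$ pair the last letter of $p$ with the first letter of $o$, and the last letter of $o$ with the first letter of $p$, both of which occur in the nod-path $pop$; and all remaining pairs occur in $p^2$ or in $pop$. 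Hence no forbidden pair occurs, and every word of $M$ is a nod-path.

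The crucial step, and the one I expect to be the main obstacle, is to show that $\{p,pop\}$ is a \emph{code}, i.e. that $M$ is free on $\alpha,\beta$, so that distinct block sequences yield distinct words. I would invoke the classical fact that two distinct nonempty words form a code if and only if they do not commute (equivalently, are not both powers of a common word). So suppose for contradiction that $p\cdot pop=pop\cdot p$ and write $p=t^{a}$, $pop=t^{c}$ with $a,c\geq 1$. Comparing lengths gives $o=t^{\,c-2a}$ with $c-2a\geq 1$. Since $p^2=t^{2a}$ is a nod-path and $2a\geq 2$, the subword $t^2$ is a nod-path, so $t$ is a nod$^2$-path. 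If $a\geq 2$, then $t$ is a nod$^2$-path which is a subword of $p^2$ of length $|t|<a|t|=|p|$, contradicting that $p$ is a quasi-cycle (Definition \ref{deffnod2}). Hence $a=1$, so $p=t$ and $o=p^{\,c-2}$ with $c-2\geq 1$; but then $p$ is a prefix of $o$, contradicting selfconnectedness. Therefore $\{p,pop\}$ is a code.

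Finally I would count. Fix $k\geq 1$. The $2^{k}$ words obtained by choosing each of $k$ successive blocks to be $\alpha$ or $\beta$ are pairwise distinct by the code property, are nod-paths by the second step, and have length at most $k\,|pop|$. By Corollary \ref{corbasis} distinct nod-paths are linearly independent in $L_K(H)$, so, taking for $V$ the (finite-dimensional, as $H$ is finite) span of all generators $v,h_{ij},h_{ij}^{*}$, the space $V^{k|pop|}$ contains $2^{k}$ linearly independent elements. Hence $d_V(n)\geq 2^{\lfloor n/|pop|\rfloor}$ for all $n$, an exponential lower bound with base $2^{1/|pop|}>1$. Together with the trivial bound $d_V(n)\leq (n+1)(\dim V)^{n}$, this yields $d_V\sim a^{n}$ for some real $a>1$, so $L_K(H)$ has exponential growth (and in particular $\GKdim L_K(H)=\infty$).
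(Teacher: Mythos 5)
Your proof is correct, and its overall architecture matches the paper's: interleave copies of $p$ and $o$ to manufacture exponentially many words, check they are nod-paths, and use the basis of nod-paths (Theorem \ref{thmbasis}, Corollary \ref{corbasis}) to get an exponential lower bound on $d_V$. Where you genuinely diverge is the injectivity step. The paper compares two words $p^{i_1}op^{i_2}\cdots op^{i_k}$ head-on: after cancelling the common prefix, one word begins with $o$ and the other with $p$, which forces $|o|<|p|$ (as $p$ is not a prefix of $o$) and then $x_{|o|+1}=x_1$, contradicting the pairwise distinctness of the letters of a quasi-cycle (Remark \ref{remnod2}(a)). You instead invoke the classical two-element code criterion: $\{p,\,pop\}$ fails to be a code only if $p$ and $pop$ commute, hence are powers of a common word $t$; the defining minimality property of a quasi-cycle (no nod$^2$-subword of $p^2$ of length $<|p|$) kills $p=t^a$ with $a\geq 2$, and $a=1$ makes $p$ a prefix of $o=p^{c-2}$, a contradiction. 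So both arguments consume the quasi-cycle hypothesis, but through different consequences of it --- yours uses the minimality condition directly and imports a standard fact from combinatorics on words, while the paper's is self-contained and uses the derived distinct-letters property. Your write-up also has a small merit the paper lacks: you verify explicitly that every word in the monoid generated by $p$ and $pop$ is a nod-path, by reducing to length-two forbidden subwords and by observing that $pop$ being a d-path forces $o$ to be a nonempty edge-path (ruling out a trivial $o$); the paper simply asserts that the words in its family are nod-paths. The final counting and the passage from the lower bound $2^{\lfloor n/|pop|\rfloor}$ to exponential growth in the sense of the asymptotic equivalence $d_V\sim a^n$ are handled correctly.
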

\begin{proof}
Let $o$ be a nod-path such that $p$ is not a prefix of $o$ and $pop$ is a nod-path. Let $n\in \N$. Consider the nod-paths
\begin{equation}
p^{i_1}op^{i_2}\dots op^{i_k}
\end{equation}
where $k,i_1,\dots,i_k\in \N$ satisfy
\begin{equation}
(i_1+\dots+i_k)|p|+(k-1)|o|\leq n.
\end{equation}
Let $A=(k,i_1,\dots,i_k)$ and $B=(k',i'_1,\dots,i'_{k'})$ be different solutions of (2). Assume that $A$ and $B$ define the same nod-path in (1). After cutting out the common beginning, we can assume that the nod-path defined by $A$ starts with $o$ and the nod-path defined by $B$ with $p$ or vice versa. Since $p$  is not a prefix of $o$, it follows that $|p|>|o|$. Write $p=x_1\dots x_m$. Since the next letter after an $o$ must be a $p$, we get $x_{|o|+1}=x_1$ which contradicts Remark \ref{remnod2}(a). Hence different solutions of (2) define different nod-paths in (1). Let $V$ denote the finite-dimensional generating subspace of $L_K(H)$ spanned by the set $\{v,h_{ij},h_{ij}^*\mid v\in H^0, h\in H^1, 1\leq i\leq |s(h)|, 1\leq j\leq |r(h)|\}$. By Theorem \ref{thmbasis} the nod-paths in (1) are linearly independent in $V^n$. The number of solutions of (2) is $\sim 2^n$ and hence $L_K(H)$ has exponential growth. 
\end{proof}

In Definition \ref{defcondab} below, we introduce the Condition (A') for hypergraphs. This condition will be used again in Section 9.

We call a multiset $M:X\rightarrow \N_0$ a {\it set} and identify it with $M_{\supp}$ if $M(x)\leq 1$ for any $x\in X$. If the multiset $M$ is not a set, then we call it a {\it proper multiset}. 

\begin{definition}\label{defcondab}
$H$ satisfies {\it Condition (A')} if there is an $h\in H^1$ such that $|s(h)|, |r(h)|\geq 2$ and moreover $s(h)$ is a proper multiset or $r(h)$ is a proper multiset or $s(h)$ and $r(h)$ are sets with nonempty intersection.
\end{definition}

\begin{corollary}\label{corexp}
If $H$ is a finite hypergraph that satisfies Condition (A'), then $L_K(H)$ has exponential growth. 
\end{corollary}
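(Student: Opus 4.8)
The plan is to reduce everything to Lemma \ref{lemexp}: it suffices to exhibit a single selfconnected quasi-cycle in $L_K(H)$, for then $L_K(H)$ has exponential growth. Using Condition (A') I fix a hyperedge $h$ with $m:=|s(h)|\geq 2$ and $k:=|r(h)|\geq 2$ satisfying one of the three alternatives of Definition \ref{defcondab}, and I build the desired quasi-cycle using only edges and ghost edges of this single $h$. Throughout I will use freely that, by Remark \ref{remhlpa}(a), the isomorphism type of $L_K(H)$ --- hence its growth --- does not depend on the chosen orderings of $s(h)$ and $r(h)$, so I may relabel these multisets whenever convenient.

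First I would treat the case in which a loop occurs, i.e. $s(h)_{i_0}=r(h)_{j_0}=:v$ for some indices; this always happens under alternative (III) and may happen under (I) or (II). After reordering $r(h)$ so that $v$ sits in range-position $2$ (legitimate since $k\geq 2$), the real edge $p:=h_{i_0 2}$ is a loop at $v$, hence a quasi-cycle (a single letter whose square is a valid, non-forbidden path). To see that it is selfconnected I would take $o:=h_{a2}^{*}h_{a2}$ for any source index $a\neq 1$ (available since $m\geq 2$): this $o$ is a nod-path from $v$ to $v$ that does not begin with $p$, and one checks that $pop=h_{i_0 2}h_{a2}^{*}h_{a2}h_{i_0 2}$ contains no forbidden subword, so $p\overset{\nod}{\Longrightarrow}p$.

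If no loop occurs, then alternative (III) is excluded, and I would argue by the source/range symmetry furnished by the involution $*$ of Proposition \ref{propprop}(ii), treating alternative (I) (the case (II) being the mirror image). Here $s(h)_i=s(h)_{i'}=u$ for distinct source-positions $i,i'$, which I choose so that $i'\neq 1$ (possible, since at most one of two distinct positions can equal $1$), and I pick a range index $j\neq 1$ (possible since $k\geq 2$). Because there is no loop, $r(h)_j\neq u$, so neither $h_{ij}$ nor $h_{i'j}^{*}$ is a loop; consequently $p:=h_{ij}h_{i'j}^{*}$, a loop at $u$, is a genuine length-$2$ quasi-cycle. I would then verify selfconnectedness with the detour $o:=h_{i'j}h_{i'j}^{*}$, whose first letter $h_{i'j}$ differs from the first letter $h_{ij}$ of $p$, by checking that every length-$2$ subword of $pop=h_{ij}h_{i'j}^{*}h_{i'j}h_{i'j}^{*}h_{ij}h_{i'j}^{*}$ avoids both forbidden patterns; the choices $i'\neq 1$ and $j\neq 1$ are exactly what is needed.

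The main obstacle is not the existence of cyclic paths --- loops at $u$ or $v$ are immediate --- but the asymmetric ``index $1$'' bookkeeping built into the forbidden words $h_{i1}h_{j1}^{*}$ and $h_{1i}^{*}h_{1j}$: a detour or a junction becomes forbidden precisely when a relevant source- or range-index equals $1$. The point of the hypotheses $|s(h)|,|r(h)|\geq 2$ (together with order-independence) is to supply enough indices different from $1$ to route $o$ around these forbidden patterns, and the delicate part of the write-up is the finite check that, with the indices chosen as above, none of the interior junctions of $pop$ is forbidden. The dual alternative (II) is handled by the same computation after applying $*$, or directly via $p:=h_{ij}^{*}h_{ij'}$ and $o:=h_{ij'}^{*}h_{ij'}$ with $i\neq 1$ and $j'\neq 1$.
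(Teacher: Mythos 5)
Your proposal is correct and follows essentially the same route as the paper: reduce to Lemma \ref{lemexp} by exhibiting a selfconnected quasi-cycle built from a single hyperedge $h$ supplied by Condition (A'), splitting into the case where the supports of $s(h)$ and $r(h)$ meet (your ``loop'' case, where the paper takes $p=h_{22}$ with $o=h_{22}^*$) and the disjoint case with a proper multiset (where the paper takes $p=h_{j2}h_{j2}^*$ with $o=h_{j2}h_{i2}^*$). Your explicit witnesses $o=h_{a2}^*h_{a2}$ and $p=h_{ij}h_{i'j}^*$, $o=h_{i'j}h_{i'j}^*$ differ slightly from the paper's but pass the same forbidden-subword checks, so this is the same argument in substance.
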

\begin{proof}
Since $H$ satisfies Condition (A'), we can choose an $h\in H^1$ such that $|s(h)|, |r(h)|\geq 2$ and $s(h)$ is a proper multiset or $r(h)$ is a proper multiset or $s(h)$ and $r(h)$ are sets with nonempty intersection.\\
First suppose that $s(h)_{\supp}\cap r(h)_{\supp}\neq\emptyset$. By Remark \ref{remhlpa} (a) we may assume that $s(h)_2=r(h)_2$. Clearly $p:=h_{22}$ is a quasi-cycle. Further $h_{22}h_{22}^*h_{22}$ is a nod-path and therefore $p$ is selfconnected. Hence, by the previous lemma, $L_K(H)$ has exponential growth.\\
Now suppose that $s(h)_{\supp}\cap r(h)_{\supp}=\emptyset$ and $s(h)$ is a proper multiset. Choose $1\leq i<j\leq |s(h)|$ such that $s(h)_i=s(h)_j$. Clearly $p:=h_{j2}h_{j2}^*$ is a quasi-cycle. Further $h_{j2}h_{j2}^*h_{j2}h_{i2}^*h_{j2}h_{j2}^*$ is a nod-path and therefore $p$ is selfconnected. Hence, by the previous lemma, $L_K(H)$ has exponential growth. The case that $s(h)_{\supp}\cap r(h)_{\supp}=\emptyset$ and $r(h)$ is a proper multiset can be handled similarly.
\end{proof}

If $H$ is a hypergraph, then we denote by $E'$ the subset of $\{h_{ij},h_{ij}^*\mid h\in H^1, 1\leq i\leq |s(h)|, 1\leq j\leq |r(h)|\}$ consisting of all the elements which are not a letter of a quasi-cycle. We denote by $P'$ the set of all nod-paths which are composed from elements of $E'$.

\begin{lemma}\label{lemfnt}
Let $H$ be a finite hypergraph. Then $|P'|<\infty$.
\end{lemma}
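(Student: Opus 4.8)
The plan is to bound the length of any nod-path lying in $P'$ by the finite number of available letters, via pigeonhole. Since $H$ is finite, the set $\{h_{ij},h_{ij}^*\mid h\in H^1,\,1\le i\le|s(h)|,\,1\le j\le|r(h)|\}$ is finite: there are finitely many hyperedges, and each $|s(h)|$ and $|r(h)|$ is a finite sum of finite multiplicities over a finite support. Hence $E'$ is finite, say $|E'|=N$. If I can show that every $p\in P'$ has \emph{pairwise distinct} letters, then $|p|\le N$, and there are at most $\sum_{k=1}^{N}N^{k}$ nod-paths over the alphabet $E'$, giving $|P'|<\infty$. So it suffices to rule out a repeated letter in a nod-path all of whose letters lie in $E'$.

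Suppose, for contradiction, that $p=x_1\dots x_n\in P'$ satisfies $x_i=x_j$ for some $i<j$. First I would check that the nonempty word $w:=x_i\dots x_{j-1}$ is a nod$^2$-path. It is a subword of the nod-path $p$, hence itself a nod-path. Moreover $w^2=x_i\dots x_{j-1}\,x_i\dots x_{j-1}$ is again a nod-path: the only consecutive pair not already occurring in $p$ is the junction pair $x_{j-1}x_i$, and since $x_i=x_j$ this equals $x_{j-1}x_j$, which is a non-forbidden pair of $p$; the source/range compatibility $r(x_{j-1})=s(x_j)=s(x_i)$ holds for the same reason. Thus $w^2$ is a nod-path and $w$ is a nod$^2$-path.

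The main step, and the real obstacle, is to extract a genuine quasi-cycle from the nod$^2$-path $w$. Here I would argue by induction on length. If $w$ is a quasi-cycle, we are done. Otherwise, by Definition \ref{deffnod2}, some subword $w'$ of $w^2$ with $|w'|<|w|$ is itself a nod$^2$-path; the key observation is that every letter of $w'$ occurs in $w^2$ and hence is a letter of $w$. Iterating this reduction, the lengths strictly decrease and stay $\ge 1$ while the set of letters never grows, so the process terminates at a quasi-cycle $q$, all of whose letters are letters of $w$, and therefore letters of $p$.

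This yields the desired contradiction. Since $q$ is a quasi-cycle whose letters appear among $x_i,\dots,x_{j-1}$, at least one letter $x_k$ of $p$ is a letter of a quasi-cycle, contradicting $x_k\in E'$. Consequently no element of $P'$ repeats a letter, every $p\in P'$ has length at most $N=|E'|$, and therefore $|P'|<\infty$.
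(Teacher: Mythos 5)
Your proof is correct and takes essentially the same approach as the paper: assume a repeated letter $x_i=x_j$ in some $p\in P'$, observe that $x_i\dots x_{j-1}$ is a nod$^2$-path, and extract from it by iterated shortening a quasi-cycle whose letters all lie among the letters of $p$, contradicting $p\in P'$; finiteness then follows since $|E'|<\infty$ bounds the length of any letter-distinct word. You merely spell out two steps the paper leaves implicit, namely the check that $w^2$ is a nod-path (the junction pair $x_{j-1}x_i=x_{j-1}x_j$ already occurs in $p$) and the terminating descent from a nod$^2$-path to a quasi-cycle with no new letters.
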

\begin{proof}
Let $p'=x_1\dots x_n\in P'$. Assume that there are $1\leq i <j\leq n$ such that $x_i=x_j$. Then $x_i\dots x_{j-1}$ is a nod$^2$-path. Since for any nod$^2$-path $q$ which is not a quasi-cycle there is a shorter nod$^2$-path $q'$ such that any letter of $q'$ already appears in $q$, we get a contradiction. Hence the $x_i$'s are pairwise distinct. It follows that $|P'|<\infty$ since $|E'|<\infty$.
\end{proof}

Let $H$ be a hypergraph. A sequence $p_1,\dots,p_k$ of quasi-cycles such that $p_i\not\approx p_j$ for any $i\neq j$ is called a {\it chain of length $k$} if $p_1 \Longrightarrow p_2\Longrightarrow \dots \Longrightarrow p_k$. We call a nod-path $p$ {\it trivial} if $p=v$ for some $v\in H^0$ and {\it nontrivial} otherwise.

\begin{lemma}\label{lemnew}
Let $H$ be a hypergraph. If there is no selfconnected quasi-cycle, then any nontrivial nod-path $\alpha$ can be written as
\begin{equation}
\alpha=o_1p_1^{l_1}q_1o_2p_2^{l_2}q_2\dots o_kp_k^{l_k}q_ko_{k+1}
\end{equation}
where $k\geq 0$, $o_i$ is the empty word or $o_i\in P'~(1\leq i \leq k+1)$, $p_1,\dots,p_k$ is a chain of quasi-cycles, $l_i$ is a nonnegative integer $(1\leq i \leq k)$, and $q_i\neq p_i$ is a prefix of $p_i~(1\leq i \leq k)$.
\end{lemma}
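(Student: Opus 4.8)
The plan is to decompose an arbitrary nontrivial nod-path $\alpha = x_1 \dots x_n$ by scanning left to right and isolating the maximal ``quasi-cyclic'' stretches, separating them with segments built purely from letters in $E'$. The underlying dichotomy is that each letter $x_i$ either lies on some quasi-cycle or belongs to $E'$; by the definition of $E'$ these two possibilities are exhaustive and mutually exclusive. So the skeleton of the proof is an induction on the length $n$ of $\alpha$, peeling off an initial $P'$-segment followed by a quasi-cyclic block.

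\emph{Setting up the induction.} First I would dispose of the base case: if every letter of $\alpha$ lies in $E'$, then $\alpha$ itself is a nod-path composed of elements of $E'$, so $\alpha\in P'$ and we may take $k=0$, $o_1=\alpha$. Otherwise, let $x_m$ be the first letter of $\alpha$ that is not in $E'$; then $o_1:=x_1\dots x_{m-1}$ is either empty or a nod-path in $P'$ (it is a subword of a nod-path, hence a nod-path, and all its letters are in $E'$). Since $x_m\notin E'$, it is a letter of some quasi-cycle $p$. Here I would invoke Lemma \ref{lemshc}, the ``no shortcut'' property: once we are sitting at a letter of a quasi-cycle, the only way to continue the nod-path is to follow the quasi-cycle around. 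Concretely, up to replacing $p$ by a shift (using Remark \ref{remnod2}(b)), I may arrange $x_m = p_1$ where $p=p_1\dots p_{|p|}$, and then Lemma \ref{lemshc} forces $x_{m+1}$ to be the next letter of $p$, and so on, so that $\alpha$ continues by reading off powers of $p$. Let this quasi-cyclic block be maximal: it consists of some full power $p^{\ell}$ followed by a proper prefix $q\neq p$ of $p$ (the block must stop at a proper prefix, since if a full copy of $p$ were followed by another letter of $p$ we would still be inside the block). Write $p_1:=p$, $l_1:=\ell$, $q_1:=q$, and let the remainder of $\alpha$ after this block be $\alpha'$.

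\emph{Closing the induction and the chain condition.} The remaining word $\alpha'$ is a nod-path of strictly smaller length, so by the induction hypothesis it admits a decomposition of the required form $\alpha'=o_2 p_2^{l_2}q_2\dots o_{k+1}$, and concatenating gives the desired expression for $\alpha$. Two points require care. The first is that the distinct quasi-cycles appearing must satisfy $p_i\not\approx p_j$ for $i\neq j$ and form a \emph{chain}, i.e.\ $p_1\Longrightarrow p_2\Longrightarrow\cdots$. The relation $p_i\Longrightarrow p_{i+1}$ (Definition \ref{deffnodc}) is essentially automatic from the construction: the way the block for $p_{i+1}$ follows that of $p_i$ inside the single nod-path $\alpha$ exhibits either $p_ip_{i+1}$ as a nod-path or an intervening nod-path $o$ with $p_i\overset{\nod}{\Longrightarrow}p_{i+1}$. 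The second point, the distinctness up to shift, is where the hypothesis that \emph{there is no selfconnected quasi-cycle} enters: if two consecutive blocks used shift-equivalent quasi-cycles $p_i\approx p_{i+1}$, then chaining them would produce $p_i\overset{\nod}{\Longrightarrow}p_i$, making $p_i$ selfconnected, contrary to assumption. So consecutive blocks use inequivalent quasi-cycles, and maximality of each block rules out spurious repetitions.

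\emph{The main obstacle.} The delicate step is verifying that the quasi-cyclic block is genuinely of the form $p^{\ell}q$ with $q$ a \emph{proper} prefix, and that the letters strictly between two quasi-cyclic blocks land in $P'$ rather than sneaking back into a quasi-cycle. This is exactly what Lemma \ref{lemshc} controls: inside a quasi-cycle the continuation is rigidly determined, so no ``branching'' out of the cycle and back in along a shortcut can occur. I would therefore spend the bulk of the argument carefully applying Lemma \ref{lemshc} to show the block structure is forced, and using the no-selfconnected-quasi-cycle hypothesis to guarantee that once we leave a quasi-cycle $p$ via a $P'$-segment we never immediately re-enter a shift of the same $p$. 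Everything else is routine bookkeeping in the induction.
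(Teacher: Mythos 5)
Your construction is essentially the paper's: scan $\alpha$ left to right, peel off a maximal initial segment with letters in $E'$, normalize the quasi-cycle by a shift (Remark \ref{remnod2}(b)) so that it begins at the first non-$E'$ letter, and use Lemma \ref{lemshc} to force the block structure $p_i^{l_i}q_i$ with $q_i\neq p_i$ a prefix; the relation $p_i\Longrightarrow p_{i+1}$ then comes from the intervening word $q_io_{i+1}$, of which $p_i$ is not a prefix (either it is shorter than $p_i$, or its letter at position $|q_i|+1$ lies in $E'$ while the corresponding letter of $p_i$ does not). All of this matches the paper's proof.

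There is, however, one genuine gap. A chain requires $p_i\not\approx p_j$ for \emph{all} $i\neq j$, but you only argue the consecutive case $p_i\not\approx p_{i+1}$, and your stated reason for the remaining pairs --- ``maximality of each block rules out spurious repetitions'' --- is not a valid reason: maximality of a block says nothing about a much later block re-entering a shift of an earlier quasi-cycle, e.g.\ it does not by itself forbid $\alpha=p_1^{l_1}q_1o_2p_2^{l_2}q_2o_3p_3^{l_3}\cdots$ with $p_3\approx p_1$. What actually rules this out is the very selfconnectedness argument you used for consecutive blocks, run across all the intervening material, which is how the paper proceeds: if $p_i\approx p_j$ with $i<j$, write $p_i=y_1\dots y_t$ and $p_j=y_{m+1}\dots y_ty_1\dots y_m$; then
\[p_i\,\underbrace{q_io_{i+1}p_{i+1}^{l_{i+1}}q_{i+1}\dots o_jy_{m+1}\dots y_t}_{\gamma}\,p_i\]
is a nod-path (one may complete the final cyclic segment to a full copy of $p_i$ because forbidden words have length two, so appending letters in the cyclic order of a quasi-cycle preserves the nod-path property --- this word need not be a subword of $\alpha$), and $p_i$ is not a prefix of $\gamma$ because, by your own scanning construction, the letter of $\alpha$ immediately after $q_i$ is not a letter of $p_i$, whereas the letter of $p_i$ at position $|q_i|+1$ is. This yields $p_i\overset{\nod}{\Longrightarrow}p_i$, contradicting the hypothesis that no quasi-cycle is selfconnected. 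With this replacement for the ``maximality'' claim (and the cosmetic point that your induction should carry the chain conditions across the splice, which the paper's iterative formulation handles automatically), your argument is complete and coincides with the paper's.
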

\begin{proof}
Let $\alpha=x_1\dots x_r$ be a nontrivial nod-path. Then $x_1,\dots,x_r \in \{e_i,e_i^*\mid e\in E^1, 1\leq i\leq w(e)\}$. Let $r_1$ be minimal such that $x_{r_1}\not\in E'$ (if such an $r_1$ does not exist, then $\alpha\in P'$ and (3) holds with $k=0$ and $o_1=\alpha$). Then $o_1:=x_1\dots x_{r_1-1}$ is either the empty word or $o_1\in P'$. Since $x_{r_1}\not\in E'$, we have that $x_{r_1}$ is a letter of a quasi-cycle, say $p_1$. By Remark \ref{remnod2}(b) we may assume that $x_{r_1}$ is the first letter of $p_1$. Let $r_2>r_1$ be minimal such that $x_{r_2}$ is not a letter of $p_1$. Then $x_{r_1},\dots, x_{r_2-1}$ are letters of $p_1$. It follows from Lemma \ref{lemshc} that $x_{r_1}\dots x_{r_2-1}=p_1^{l_1}q_1$ for some $l_1\geq 0$ and $q_1\neq p_1$ that is a prefix of $p_1$. Let $r_3\geq r_2$ be minimal such that $x_{r_3}\not\in E'$ and set $o_2:=x_{r_2}\dots x_{r_3-1}$. Then $o_2$ is either the empty word or $o_2\in P'$. Since $x_{r_3}\not \in E'$, we have that $x_{r_3}$ is a letter of a quasi-cycle, say $p_2$, and we may assume that $p_2$ starts with $x_{r_3}$. Let $r_4>r_3$ be minimal such that $x_{r_4}$ is not a letter of $p_2$. Then $x_{r_3}\dots x_{r_4-1}=p_2^{l_2}q_2$ for some $l_2\geq 0$ and $q_2\neq p_2$ that is a prefix of $p_2$ (see above).\\
By repeating the procedure described in the previous paragraph one gets that $\alpha$ can be written as in Equation (3) where $k\geq 0$, $o_i$ is the empty word or $o_i\in P'~(1\leq i \leq k+1)$, $p_1,\dots,p_k$ are quasi-cycles, $l_i$ is a nonnegative integer $(1\leq i \leq k)$, and $q_i\neq p_i$ is a prefix of $p_i~(1\leq i \leq k)$. It remains to show that $p_1,\dots,p_k$ is a chain of quasi-cycles.\\
First we show that $p_i \Longrightarrow p_{i+1}$ for any $1\leq i\leq k-1$. Because of (3) we know that  $p_i\underbrace{q_io_{i+1}}_{\beta:=}p_{i+1}$ is a nod-path. If $\beta$ is the empty word, then $p_i\Longrightarrow p_{i+1}$ by the definition of $\Longrightarrow$. Otherwise $p_i$ is not a prefix of $\beta$ since $q_i$ is the empty word or $|q_i|<|p_i|$ and $o_i$ is the empty word or $o_{i+1}\in P'$. Hence we get again $p_i\Longrightarrow p_{i+1}$.\\
Now assume that $p_i\approx p_j$ for some $1\leq i<j\leq n$. Write $p_i=y_1\dots y_t$. Then $p_j=y_{m+1}\dots y_{t}y_1\dots y_{m}$ for some $1\leq m\leq t$. Because of (3) we have that $p_iq_io_{i+1}p_{i+1}q_{i+1}\dots o_{j-1}p_{j-1}q_{j-1}o_jp_j$ is a nod-path. It follows that 
\[p_i\underbrace{q_io_{i+1}p_{i+1}q_{i+1}\dots o_{j-1}p_{j-1}q_{j-1}o_{j}y_{m+1}\dots y_{t}}_{\gamma:=}p_i\]
is a nod-path. By the construction in the first paragraph of this proof, the first letter in $\alpha$ after $q_i$ is not a letter of $p_i$. Hence $p_i$ is not a prefix of $\gamma$ and hence we get the contradiction $p_i\overset{\nod}{\Longrightarrow} p_i$. Therefore $p_i\not\approx p_j$ for any $i\neq j$ and thus $p_1,\dots,p_k$ is a chain of quasi-cycles.
\end{proof}
\begin{theorem}\label{thmgk}
Let $H$ be a finite hypergraph. Then:
\begin{enumerate}[(i)]
\item $L_K(H)$ has polynomial growth iff there is no selfconnected quasi-cycle.
\item If $L_K(H)$ has polynomial growth, then $\GKdim L_K(H)=d$ where $d$ is the maximal length of a chain of quasi-cycles.
\end{enumerate}
\end{theorem}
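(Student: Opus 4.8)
The proof of Theorem \ref{thmgk} breaks naturally into the two claimed equivalences, and the machinery for both has been assembled in Lemmas \ref{lemexp}, \ref{lemfnt} and \ref{lemnew}. For part (i), the direction ``selfconnected quasi-cycle $\Rightarrow$ exponential growth'' is precisely Lemma \ref{lemexp}, and exponential growth rules out polynomial growth; so one implication is free. The content is the converse: I plan to assume there is no selfconnected quasi-cycle and produce a polynomial upper bound on the growth function $d_V(n)$, where $V$ is the finite-dimensional generating subspace spanned by the vertices and the symbols $h_{ij}, h_{ij}^*$. By Theorem \ref{thmbasis} and Corollary \ref{corbasis}, $\dim V^n$ equals the number of nod-paths of length $\le n$ (plus the vertices), so I need to \emph{count} nod-paths of bounded length.

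\textbf{Counting via the normal form of Lemma \ref{lemnew}.} The key tool is the canonical decomposition
\[\alpha=o_1p_1^{l_1}q_1o_2p_2^{l_2}q_2\dots o_kp_k^{l_k}q_ko_{k+1}\]
from Lemma \ref{lemnew}, valid for every nontrivial nod-path when no selfconnected quasi-cycle exists. Here $p_1,\dots,p_k$ is a \emph{chain} of quasi-cycles, so $k$ is bounded above by $d$, the maximal chain length. Since $H$ is finite, there are only finitely many quasi-cycles (Remark \ref{remnod2}(a)) and, by Lemma \ref{lemfnt}, only finitely many paths $o_i\in P'$; the prefixes $q_i$ are also drawn from a finite set. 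Thus the only unbounded data in the decomposition are the exponents $l_1,\dots,l_k$, each a nonnegative integer. I would argue that the decomposition in Lemma \ref{lemnew} is essentially unique (the first-letter-of-quasi-cycle markers $r_1,r_2,\dots$ in that proof are determined by $\alpha$), so distinct nod-paths correspond to distinct tuples of this combinatorial data. A nod-path of length $\le n$ then has $\sum l_i|p_i|\le n$, so the number of admissible exponent tuples $(l_1,\dots,l_k)$ with $k\le d$ is bounded by a polynomial in $n$ of degree $d$: there are $O(n^k)$ ways to choose the $l_i$, and the finitely many choices of $o_i,p_i,q_i$ contribute only a constant factor. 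This gives $d_V(n)\preccurlyeq n^d$, hence polynomial growth and $\GKdim L_K(H)\le d$, completing part (i) and half of part (ii).

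\textbf{The matching lower bound for (ii).} For $\GKdim L_K(H)\ge d$, I take a chain $p_1\Longrightarrow p_2\Longrightarrow\dots\Longrightarrow p_d$ realizing the maximal length and, using the definition of $\Longrightarrow$, choose connecting words $o_i$ so that $p_1^{l_1}o_2p_2^{l_2}o_3\dots o_dp_d^{l_d}$ is a nod-path for all nonnegative exponents $l_i$. (When $p_i\overset{\nod}{\Longrightarrow}p_{i+1}$ I insert the witnessing path; when $p_ip_{i+1}$ is already a nod-path I take $o_{i+1}$ empty.) Since the $p_i$ are pairwise non-shift-equivalent, distinct exponent tuples give distinct nod-paths, and I must check they are genuinely distinct as words — this is where I would lean on Lemma \ref{lemshc} (``no shortcuts'' inside a quasi-cycle) to recover each $l_i$ from the word. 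Counting tuples with $\sum l_i|p_i|\le n$ gives $\sim n^d$ linearly independent elements in $V^{n}$ (up to a constant rescaling of $n$ absorbed by $\preccurlyeq$), so $d_V(n)\succcurlyeq n^d$ and $\GKdim L_K(H)\ge d$.

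\textbf{Main obstacle.} The routine parts are the finiteness inputs and the polynomial tuple-count; the delicate point is the \emph{injectivity} of the parametrization in both bounds — that the combinatorial data $(o_i,p_i,l_i,q_i)$ can be recovered unambiguously from the nod-path $\alpha$, so that counting tuples really counts distinct basis elements. For the upper bound this is the uniqueness of the Lemma \ref{lemnew} decomposition, and for the lower bound it is the claim that the explicitly constructed words are pairwise distinct nod-paths. I expect the crux to be ruling out accidental coincidences where a quasi-cycle block could be reparsed against an adjacent $o_i$ or against another quasi-cycle; this is exactly the scenario excluded by the non-shift-equivalence in the chain together with Lemma \ref{lemshc}, so the argument should turn on invoking those two facts carefully rather than on any new computation.
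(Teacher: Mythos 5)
Your proposal follows the paper's own proof essentially step for step: Lemma \ref{lemexp} for the selfconnected-quasi-cycle direction, the decomposition of Lemma \ref{lemnew} together with the finiteness facts (Remark \ref{remnod2}(a), Lemma \ref{lemfnt}) for the upper bound $\preccurlyeq n^d$, and a maximal chain $p_1\Longrightarrow\dots\Longrightarrow p_d$ with a distinctness-of-words argument for the lower bound $\succcurlyeq n^d$. Two cosmetic corrections: the upper bound needs no uniqueness of the decomposition (a tuple $(o_i,p_i,l_i,q_i)$ determines the word by concatenation, so Lemma \ref{lemnew} alone gives that the number of nod-paths is at most the number of tuples), and in the lower bound the exponents $l_i$ should be taken strictly positive as in the paper (with $l_i=0$ a junction $o_io_{i+1}$ may contain a forbidden subword); also, the paper's distinctness case analysis runs through Remark \ref{remnod2}(b) and the minimality property of quasi-cycles rather than through Lemma \ref{lemshc}, which is instead used inside the proof of Lemma \ref{lemnew}.
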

\begin{proof}
If there is a selfconnected quasi-cycle, then $L_K(H)$ has exponential growth by Lemma \ref{lemexp}. Suppose now that there is no selfconnected quasi-cycle. Let $V$ denote the finite-dimensional generating subspace of $L_K(H)$ spanned by $\{v,h_{ij},h_{ij}^*\mid v\in H^0, h\in H^1, 1\leq i\leq |s(h)|, 1\leq j\leq |r(h)|\}$. By Theorem \ref{thmbasis} the nod-paths of length $\leq n$ form a basis for $V^n$. 
By Lemma \ref{lemnew} we can write any nontrivial nod-path $\alpha$ of length $\leq n$ as
\begin{equation}
\alpha=o_1p_1^{l_1}q_1o_2p_2^{l_2}q_2\dots o_kp_k^{l_k}q_ko_{k+1}
\end{equation}
where $k\geq 0$, $o_i$ is the empty word or $o_i\in P'~(1\leq i \leq k+1)$, $p_1,\dots,p_k$ is a chain of quasi-cycles, $l_i$ is a nonnegative integer $(1\leq i \leq k)$, and $q_i\neq p_i$ is a prefix of $p_i~(1\leq i \leq k)$. Clearly 
\begin{equation}
l_1|p_1| +\dots+l_k|p_k| \leq n
\end{equation}
since $|\alpha|\leq n$. Now fix a chain $p_1,\dots,p_k$ of quasi-cycles and further $o_i$'s and $q_i$'s as above. The number of solutions $(l_1,\dots,l_k)$ of (5) is $\sim n^k$. This implies that the number of nod-paths $\alpha$ of length $n$ or less that can be written as in (4) (corresponding to the choice of the $p_i$'s, $o_i$'s and $q_i$'s) is $\preccurlyeq n^k \leq n^d$. Since there are only finitely many quasi-cycles and finitely many choices for the $o_i$'s and $q_i$'s (note that $|P'|<\infty$ by Lemma \ref{lemfnt}), the number of nod-paths of length $n$ or less is $\preccurlyeq n^d$.\\
On the other hand, choose a chain $p_1,\dots,p_d$ of length $d$. Then $p_1o_1p_2\dots o_{d-1}p_d$ is a nod-path for some $o_1,\dots,o_{d-1}$ such that for any $i\in\{1,\dots,d-1\}$, $o_i$ is either the empty word or a nod-path such that $p_i$ is not a prefix of $o_i$. Consider the nod-paths
\begin{equation}
p_1^{l_1}o_1p_2^{l_2}\dots o_{d-1}p_d^{l_d}
\end{equation}
where $l_1,\dots,l_d\in \N$ satisfy
\begin{equation}
l_1|p_1| +\dots+l_d|p_d|+|o_1|+\dots+|o_{d-1}|\leq n.
\end{equation}
Let $A=(l_1,\dots,l_d)$ and $B=(l'_1,\dots,l'_{d})$ be different solutions of (7). Assume that $A$ and $B$ define the same nod-path in (6). After cutting out the common beginning, we can assume that the nod-path defined by $A$ starts with $o_ip_{i+1}$ for some $i\in\{1,\dots,d-1\}$ and the nod-path defined by $B$ with $p_io_i$ or $p_i^2$. If $o_i$ is the empty word, then we get the contradiction $p_i=p_{i+1}$, since $p_i$ and $p_{i+1}$ are quasi-cycles. Suppose now that $o_i$ is not the empty word. Since $p_i$ is not a prefix of $o_i$, it follows that $|o_i|<|p_i|$. Further $|p_i|<|o_i|+|p_{i+1}|$ (otherwise $p_{i+1}$ would be a subword of $p_i$ of length $<|p_i|$). Write $p_i=x_1\dots x_k$ and $p_{i+1}=y_1\dots y_m$.\\
\\
\underline{case 1} Assume that $|p_i|\leq |p_{i+1}|$. Then $o_i=x_1\dots x_j$ and $p_{i+1}=x_{j+1}\dots x_k x_1\dots x_jy_{k+1}\dots y_m$ for some $j\in\{1,\dots,k-1\}$. By Remark \ref{remnod2}(b), $x_{j+1}\dots x_k x_1\dots x_j$ is a quasi-cycle. It follows that $k=m$. Hence we get the contradiction $p_i\approx p_{i+1}$.\\
\\
\underline{case 2} Assume that $|p_i|>|p_{i+1}|$. Then $o_i=x_1\dots x_j$ and $p_{i+1}=x_{j+1}\dots x_k x_1\dots x_l$ for some $j\in\{1,\dots,k-1\}$ and $l\in\{1,\dots, j-1\}$. But this yields the contradiction that $p_{i+1}$ is a subword of $p_i^2$ of length $<|p_i|$.\\
\\
Hence different solutions of (7) define different nod-paths in (6). The number of solutions of (7) is $\sim n^d$ and thus $n^d \preccurlyeq$ the number of nod-paths of length $n$ or less.
\end{proof}

\begin{remark}
Let $H$ be a nonfinite hypergraph. One can use Theorem \ref{thmgk} to determine $\GKdim L_K(H)$ as follows. Let $\{H_i\mid i\in I\}$ be the direct system of all finite subhypergraphs of $H$. Then $L_K(E,w)=\varinjlim\limits_i L_K(H_i)$ by Proposition \ref{proplim2}. By \cite[Theorem 3.1]{moreno-molina} we have $\GKdim L_K(H)=\sup\limits_i \GKdim L_K(H_i)$.
\end{remark}

In general it is not so easy to read off the quasi-cycles from a finite hypergraph. But there is the following algorithm to find all the quasi-cycles: For any vertex $v$ list all the d-paths $x_1\dots x_n$ starting and ending at $v$ and having the property that $x_i\neq x_j$ for any $i\neq j$ (there are only finitely many of them). Now delete from that list any $p$ such that $p^2$ is not a nod-path. Next delete from the list any $p$ such that $p^2$ has a subword $q$ of length $|q|<|p|$ such that $q^2$ is a nod-path. The remaining d-paths on the list are precisely the quasi-cycles starting (and ending) at $v$.

\begin{example}\label{ex11}
Consider again the hypergraph 
\begin{center}
\begin{tikzpicture}
  \node (H) at (0,-1.5) { $H:$ };
  \node (v_1) at (2,0)   { $v_1$ };
  \node (v_2) at (2,-3) { $v_2$ };
  \node (w_1) at (8,0)  { $w_1$ };
  \node (w_2) at (8,-3)  { $w_2$ };
  \connectFour[
  @ratio=.5,
  @pos1=.7,
  @pos2=.7,
  @edge 3=->,
  @edge 4=->,
  @edge=thick
  ]{v_1}{v_2}{w_1}{w_2}{h}
\end{tikzpicture}
\end{center}
from Example \ref{ex1}. By applying the algorithm described in the paragraph right before this example we find that the only quasi-cycles are $q=h_{22}h_{22}^*$ and $q'=h_{22}^*h_{22}$. Clearly $q\approx q'$. It follows from Theorem \ref{thmgk} that $\GKdim L_K(H)=1$.
\end{example}

\section{Valuations and local valuations}
\subsection{General results}

\begin{definition}\label{defval}
Let $R$ be a ring. A {\it valuation} on $R$ is a map $\nu:R\rightarrow \N_0\cup\{-\infty\}$ such that 
\begin{enumerate}[(i)]
\item $\nu(x)=-\infty\Leftrightarrow x=0$ for any $x\in R$,
\item $\nu(x-y)\leq \max\{\nu(x),\nu(y)\}$ for any $x,y\in R$ and
\item $\nu(xy)=\nu(x)+\nu(y)$ for any $x,y\in R$.
\end{enumerate}
We use the conventions $-\infty<n$ for any $n\in\N_0$ and $(-\infty)+n=n+(-\infty)=-\infty$ for any $n\in\N_0\cup\{-\infty\}$.
\end{definition}

\begin{remark}
One checks easily that condition (ii) in Definition \ref{defval} is satisfied iff the conditions (iia) and (iib) below are satisfied.
\begin{enumerate}[({ii}a)]
\item $\nu(x+y)\leq \max\{\nu(x),\nu(y)\}$ for any $x,y\in R$.
\item $\nu(x)=\nu(-x)$ for any $x\in R$.
\end{enumerate}
\end{remark}

Recall that a {\it domain} is a nonzero ring without zero divisors. 
\begin{lemma}\label{lemval}
Let $R$ be a nonzero ring that has a valuation. Then $R$ is a domain.
\end{lemma}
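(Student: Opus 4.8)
The goal is to show that a nonzero ring $R$ equipped with a valuation $\nu$ has no zero divisors. Since $R$ is assumed nonzero, the only thing to verify is that the product of two nonzero elements is nonzero. The plan is to argue by contraposition on property (i) of Definition \ref{defval}: an element $x$ is zero if and only if $\nu(x)=-\infty$, so it suffices to show that $\nu(xy)\neq -\infty$ whenever $\nu(x)\neq -\infty$ and $\nu(y)\neq -\infty$.

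\textbf{Main argument.} Let $x,y\in R$ with $x\neq 0$ and $y\neq 0$. By property (i) we have $\nu(x)\in\N_0$ and $\nu(y)\in\N_0$, so in particular both values are genuine nonnegative integers, not $-\infty$. Property (iii) gives $\nu(xy)=\nu(x)+\nu(y)$, and the sum of two nonnegative integers is again a nonnegative integer. Hence $\nu(xy)\in\N_0$, so $\nu(xy)\neq -\infty$, and applying property (i) once more yields $xy\neq 0$. This shows $R$ has no zero divisors; combined with the hypothesis that $R$ is nonzero, $R$ is a domain.

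\textbf{Remarks on the obstacle.} There is essentially no obstacle here: the statement is a direct unwinding of the definition, and the only subtlety is bookkeeping with the conventions for $-\infty$ adopted at the end of Definition \ref{defval} (namely that $(-\infty)+n=-\infty$, which is precisely why one needs $x,y$ nonzero to conclude $\nu(x),\nu(y)\in\N_0$ before adding). I would make sure to invoke property (i) in both directions — once to pass from $x,y\neq 0$ to finite valuations, and once at the end to pass from $\nu(xy)\neq -\infty$ back to $xy\neq 0$ — and to note explicitly that the nonzeroness of $R$ in the hypothesis supplies the ``nonzero ring'' clause in the definition of a domain. Properties (ii) and (iia)/(iib) play no role in this particular lemma and can be ignored.
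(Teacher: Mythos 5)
Your proof is correct and is essentially the paper's argument in contrapositive form: the paper starts from $xy=0$ and uses properties (i) and (iii) to conclude $\nu(x)=-\infty$ or $\nu(y)=-\infty$, while you start from $x,y\neq 0$ and run the same computation forward. The two are logically identical, so no further comparison is needed.
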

\begin{proof}
Let $\nu$ be a valuation on $R$. Let $x,y\in R$ such that $xy=0$. Then $-\infty=\nu(0)=\nu(xy)=\nu(x)+\nu(y)$ and hence $\nu(x)=-\infty$ or $\nu(y)=-\infty$. Thus $x=0$ or $y=0$.
\end{proof}

\begin{definition}
A ring with enough idempotents is a pair $(R,E)$ where $R$ is a ring and $E$ is a set
of nonzero orthogonal idempotents in $R$ for which the set of finite sums of
distinct elements of $E$ is a set of local units for $R$. Note that if $(R,E)$ is a ring with enough idempotents, then $R=\bigoplus\limits_{e\in E}eR=\bigoplus\limits_{f\in E}Rf=\bigoplus\limits_{e,f\in E}eRf$ as additive groups. A ring with enough idempotents $(R,E)$ is called {\it connected} if $eRf\neq\{0\}$ for any $e,f\in E$.
\end{definition}

\begin{definition}\label{deflocval}
Let $(R,E)$ be a ring with enough idempotents. A {\it local valuation} on $(R,E)$ is a map $\nu:R\rightarrow \N_0\cup\{-\infty\}$ such that 
\begin{enumerate}[(i)]
\item $\nu(x)=-\infty\Leftrightarrow x=0$ for any $x\in R$,
\item $\nu(x-y)\leq \max\{\nu(x),\nu(y)\}$ for any $x,y\in R$ and
\item $\nu(xy)=\nu(x)+\nu(y)$ for any $e\in E$, $x\in Re$ and $y\in eR$.
\end{enumerate}
A local valuation $\nu$ on $(R,E)$ is called {\it trivial} if $\nu(x)=0$ for any $x\in R\setminus\{0\}$ and {\it nontrivial} otherwise.
\end{definition}

Let $R$ be a ring. Recall that a left ideal $I$ of $R$ is called {\it essential} if $I \cap J=\{0\}~\Rightarrow ~J=\{0\}$ for any left ideal $J$ of $R$. If $I$ is an essential left ideal of $R$, then we write $I\subseteq_e R$. For any $x\in R$ define the left ideal $\ann(x):=\{y\in R \mid yx=0\}$. The ring $R$ is called {\it left nonsingular}, if $\ann(x)\subseteq_e R~\Leftrightarrow x=0$ for any $x\in R$. A right nonsingular ring is defined similarly. $R$ is called {\it nonsingular} if it is left and right nonsingular.

\begin{proposition}\label{proplocval1}
Let $(R,E)$ be a ring with enough idempotents that has a local valuation. Then $R$ is nonsingular. 
\end{proposition}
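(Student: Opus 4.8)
The plan is to establish left and right nonsingularity separately, obtaining the right-handed statement for free by passing to the opposite ring. Indeed, I would first check that $\nu$ is again a local valuation on $(R^{\operatorname{op}},E)$: conditions (i) and (ii) of Definition \ref{deflocval} are insensitive to the order of multiplication, and since $R^{\operatorname{op}}e=eR$ and $eR^{\operatorname{op}}=Re$, condition (iii) for $R^{\operatorname{op}}$ reads $\nu(yx)=\nu(y)+\nu(x)$ for $e\in E$, $y\in Re$, $x\in eR$, which is exactly condition (iii) for $R$. Since left nonsingularity of $R^{\operatorname{op}}$ is precisely right nonsingularity of $R$, it then suffices to prove that $R$ is left nonsingular.

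For left nonsingularity I must show $\ann(x)\subseteq_e R\Leftrightarrow x=0$. The implication $\Leftarrow$ is immediate, as $\ann(0)=R$ is essential. For the converse I would argue contrapositively: given $x\neq 0$, I will exhibit a nonzero left ideal meeting $\ann(x)$ trivially, so that $\ann(x)$ cannot be essential. Using the decomposition $R=\bigoplus_{e\in E}eR$ recorded for rings with enough idempotents, pick $e\in E$ with $x':=ex\neq 0$; then $ex'=x'$, so $x'\in eR$ and $\nu(x')\in\N_0$.

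The key computation is then to show $Re\cap\ann(x)=\{0\}$. Let $z\in Re\cap\ann(x)$, so $z=ze$ and $zx=0$. The crucial observation is that $zx=(ze)x=z(ex)=zx'$, so the product now has $z\in Re$ and $x'\in eR$ in exactly the position required by condition (iii); hence $\nu(zx)=\nu(z)+\nu(x')$. Since $zx=0$ gives $\nu(zx)=-\infty$ while $\nu(x')\in\N_0$, the only possibility is $\nu(z)=-\infty$, i.e.\ $z=0$. As $Re$ is a nonzero left ideal (it contains the nonzero idempotent $e$), this shows $\ann(x)$ is not essential, completing the left-nonsingular case.

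I expect the only real subtlety to be this reduction step: condition (iii) gives only a \emph{local} multiplicativity, valid when the right factor lies in $eR$ and the left factor in $Re$ for a common $e\in E$, so one cannot apply it to $zx$ as written. The trick of replacing $x$ by its component $x'=ex$ and rewriting $zx=zx'$ is what makes the valuation usable, and the verification that $R^{\operatorname{op}}$ again carries a local valuation is the other point deserving a line of care.
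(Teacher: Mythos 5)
Your proof is correct and takes essentially the same route as the paper: for $x\neq 0$ you pick $e\in E$ with $ex\neq 0$ and use local multiplicativity, applied to $zx=z(ex)$ with $z\in Re$ and $ex\in eR$, to get $\ann(x)\cap Re=\{0\}$ with $Re\neq\{0\}$, which is exactly the computation in the paper's proof of Proposition \ref{proplocval1}. The only difference is packaging: the paper "leaves the right singularity to the reader," and your verification that $\nu$ is again a local valuation on $(R^{\operatorname{op}},E)$ (using $R^{\operatorname{op}}e=eR$, $eR^{\operatorname{op}}=Re$ and commutativity of addition on $\N_0\cup\{-\infty\}$) is a clean way to discharge that omitted half.
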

\begin{proof}
We show only left singularity of $R$ and leave the right singularity to the reader. Let $\nu$ be a local valuation on $(R,E)$ and $x\in R\setminus\{0\}$. Choose an $e\in E$ such that $ex\neq 0$. Suppose that $ye\in \ann(x)$ for some $y\in R$. Then 
\[\nu(ye)+\nu(ex)=\nu(yex)=\nu(0)=-\infty\]
and hence $ye=0$. This shows that $\ann(x)\cap Re=\{0\}$. But $Re\neq \{0\}$ since $e\in Re$. Hence $\ann(x)$ is not essential.
\end{proof}

Recall that a nonzero ring $R$ is called a {\it prime ring} if $IJ=\{0\}~\Rightarrow I=\{0\} \lor J=\{0\}$ for any ideals $I$ and $J$ of $R$. Equivalently, $R$ is a prime ring if $xRy=\{0\}~\Rightarrow~(x=0 \lor y=0)$ for any $x,y\in R$.

\begin{proposition}\label{proplocval2}
Let $(R,E)$ be a nonzero, connected ring with enough idempotents that has a local valuation. Then $R$ is a prime ring. 
\end{proposition}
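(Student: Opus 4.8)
The plan is to verify the elementwise characterization of primeness recalled just before the statement: I will show that if $x,y\in R\setminus\{0\}$, then $xRy\neq\{0\}$. Since $(R,E)$ is a ring with enough idempotents, the additive decomposition $R=\bigoplus_{e,f\in E}eRf$ lets me pick idempotents $e,f,g,h\in E$ with $a:=exf\neq 0$ and $b:=gyh\neq 0$; in particular $a=eaf$ (so $a\in Rf$) and $b=gbh$ (so $b\in gR$).

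The key idea is to use connectedness to \emph{bridge} the idempotents $f$ and $g$. Since $(R,E)$ is connected, $fRg\neq\{0\}$, so I can choose $c\in fRg\setminus\{0\}$, i.e.\ $c=fcg$; note this gives $fc=c$ and $cg=c$. Now I apply condition (iii) of the local valuation twice. First, $a\in Rf$ and $c\in fR$, so $\nu(ac)=\nu(a)+\nu(c)$; as $a,c\neq 0$ their valuations lie in $\N_0$ by condition (i), whence $ac\neq 0$ and $\nu(ac)\in\N_0$. Moreover $ac=acg\in Rg$ while $b\in gR$, so a second application yields $\nu(acb)=\nu(ac)+\nu(b)=\nu(a)+\nu(c)+\nu(b)\in\N_0$, and therefore $acb\neq 0$.

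Finally I unwind the definitions: using $ff=f$, $fc=c$ and $cg=c$,
\[
acb=(exf)(fcg)(gyh)=exfcg\,yh=ex\,c\,yh=e(xcy)h,
\]
so $acb\neq 0$ forces $xcy\neq 0$. As $xcy\in xRy$, this gives $xRy\neq\{0\}$, which is exactly what I wanted. The only genuine subtlety—and the step I would watch most carefully—is the bookkeeping of idempotents needed to invoke the multiplicativity axiom (iii), which requires the inner idempotent to match on both factors of each product; connectedness is precisely what supplies the bridging element $c\in fRg$ that makes the two applications legitimate, while the finiteness of $\nu$ on nonzero elements is immediate from axiom (i).
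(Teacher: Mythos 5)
Your proof is correct and follows essentially the same route as the paper: the paper picks $e,f\in E$ with $xe\neq 0$ and $fy\neq 0$, bridges with $z\in eRf\setminus\{0\}$ via connectedness, and computes $\nu(xzy)=\nu(xe)+\nu(z)+\nu(fy)\geq 0$ by two applications of axiom (iii), exactly as you do. Your only deviation is cosmetic: you take two-sided components $exf$ and $gyh$ where the paper uses the lighter one-sided components $xe$ and $fy$, which spares the final unwinding step $acb=e(xcy)h$.
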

\begin{proof}
Let $\nu$ be a local valuation on $(R,E)$ and $x,y\in R\setminus\{0\}$. Clearly there are $e,f\in E$ such that $xe, fy\neq 0$. Since $(R,E)$ is connected, we can choose a $z\in eRf\setminus\{0\}$. Clearly 
\[\nu(xzy)=\nu(xezfy)=\nu(xe)+\nu(z)+\nu(fy)\geq 0.\]
Hence $xzy\neq 0$ and thus $xRy\neq \{0\}$.
\end{proof}

Recall that a ring $R$ is called {\it von Neumann regular} if for any $x\in R$ there is a $y\in R$ such that $xyx=x$. 

\begin{proposition}\label{proplocval3}
Let $(R,E)$ be a ring with enough idempotents that has a nontrivial local valuation. Then $R$ is not von Neumann regular. 
\end{proposition}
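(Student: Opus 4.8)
The plan is to argue by contradiction: assuming $R$ is von Neumann regular, I will show that any local valuation $\nu$ on $(R,E)$ is forced to be trivial, contradicting the hypothesis. Two preliminary observations set up the reduction. First, $\nu(e)=0$ for every $e\in E$, since $e=e\cdot e$ with $e\in Re$ and $e\in eR$, so axiom (iii) of Definition~\ref{deflocval} gives $\nu(e)=2\nu(e)$ and hence $\nu(e)=0$. Second, because the finite sums of distinct elements of $E$ are local units, every nonzero $z\in R$ can be written as a finite sum $z=\sum_{e,f\in F}ezf$ with $F\subseteq E$ finite, and axiom (ii) (which as in the Remark after Definition~\ref{defval} yields $\nu(a+b)\le\max\{\nu(a),\nu(b)\}$) gives $\nu(z)\le\max_{e,f\in F}\nu(ezf)$. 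Since $\nu(z)\ge 0$ for $z\neq 0$, it therefore suffices to prove that $\nu(w)=0$ for every nonzero $w\in eRf$ with $e,f\in E$.

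I would first analyze the corners. For a fixed $e\in E$, the set $eRe$ is a unital ring with identity $e$, and a standard sandwiching argument shows it inherits von Neumann regularity: if $a\in eRe$ and $aya=a$ in $R$, then $y':=eye\in eRe$ satisfies $a y' a=(ae)y(ea)=aya=a$. Moreover $\nu$ restricts to a genuine \emph{valuation} on $eRe$, since for $a,b\in eRe$ one has $a\in Re$ and $b\in eR$, so the multiplicativity axiom (iii) applies to the product $ab$. By Lemma~\ref{lemval} the nonzero ring $eRe$ is then a domain, and a unital von Neumann regular domain is a division ring (from $aba=a$ one gets $ab=e=ba$ for $a\neq 0$). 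In particular, the only nonzero idempotent of $eRe$ is $e$.

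For a general nonzero $w\in eRf$ I would invoke regularity of $R$ to choose $y$ with $wyw=w$, and then replace $y$ by $fye\in fRe$; this preserves $wyw=w$ because $ew=w=wf$. With $y\in fRe$ one checks $wy\in eRe$ and $yw\in fRf$, and both are nonzero idempotents (e.g. $(wy)w=w\neq 0$). By the division-ring structure of the corners this forces $wy=e$ and $yw=f$. Finally, since $w\in Rf$ and $y\in fR$ share the single idempotent $f$, axiom (iii) yields $\nu(w)+\nu(y)=\nu(wy)=\nu(e)=0$; as both values are nonnegative, $\nu(w)=0$. Together with the reduction of the first paragraph this shows $\nu$ is trivial, the desired contradiction.

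The step I expect to be the main obstacle is the passage from the diagonal corners to the off-diagonal pieces $eRf$: the multiplicativity axiom only fires through a \emph{single} idempotent of $E$, so the essential maneuver is to normalize the regularity witness to $y\in fRe$, which makes the products $wy$ and $yw$ land inside the corners $eRe$ and $fRf$ where the division-ring structure is available. Everything else—the computation $\nu(e)=0$, the subadditivity bound on sums, and the verification that $eRe$ is a von Neumann regular domain—is routine bookkeeping with the local units and the three axioms of a local valuation.
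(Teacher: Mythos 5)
Your proof is correct, but it runs in the opposite direction from the paper's. The paper argues directly: it picks a single element $x$ with $\nu(x)>0$, reduces to $x\in eRf$ via axiom (ii), and observes that for every $y\in R$ one has $\nu(xyx)=\nu(xfyex)=\nu(x)+\nu(fye)+\nu(x)$, which is either $-\infty$ or $\geq 2\nu(x)>\nu(x)$; hence $xyx\neq x$ and that one element already witnesses the failure of regularity --- three lines, no contradiction hypothesis, no structure theory. You instead assume regularity and show every local valuation is forced to be trivial: you prove $\nu(e)=0$ on $E$, reduce via the Peirce decomposition to the components $eRf$, show each corner $eRe$ is a von Neumann regular domain (using Lemma~\ref{lemval}) and hence a division ring, and then normalize the regularity witness to $y\in fRe$ so that $wy=e$ and $yw=f$, from which $\nu(w)=0$ follows through the single idempotent $f$. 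All the steps check out: the normalization $y\mapsto fye$ is the right move to make axiom (iii) applicable, the idempotents $wy$, $yw$ are nonzero since $wyw=w\neq 0$, and the uniqueness of nonzero idempotents in a domain pins them to $e$ and $f$. What your route buys is a genuinely stronger structural conclusion --- in a von Neumann regular ring with enough idempotents every local valuation is trivial, and the corners supporting a valuation are division rings --- at the cost of invoking regularity globally, Lemma~\ref{lemval}, and the corner analysis; the paper's argument buys brevity and locality, showing that any single element of positive value is already non-regular, which is all the proposition needs.
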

\begin{proof}
Let $\nu$ be a nontrivial local valuation on $(R,E)$. Choose an $x\in R$ such that $\nu(x)>0$. By condition (ii) in Definition \ref{deflocval} we may assume that $x\in eRf$ for some $e,f\in E$. Let $y\in R$. Clearly 
\[\nu(xyx)=\nu(xfyex)=\nu(x)+\nu(fye)+\nu(x).\]
It follows that either $\nu(xyx)=-\infty$ (if $\nu(fye)=-\infty$) or $\nu(xyx)\geq 2\nu(x)$ (if $\nu(fye)\geq 0$). Thus $xyx\neq x$.
\end{proof}

Recall that a ring is called {\it semiprimitive} if its Jacobson radical is the zero ideal. 
\begin{proposition}\label{proplocval4}
Let $(R,E)$ be a connected ring with enough idempotents. Suppose $R$ is a $K$-algebra and there is a local valuation $\nu$ on $(R,E)$ such that $\nu(x)=0$ iff $x\in \Span(E)\setminus\{0\}$ where $\Span(E)$ denotes the linear subspace of $R$ spanned by $E$. Then $R$ is semiprimitive.
\end{proposition}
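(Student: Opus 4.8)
The plan is to prove $R$ is semiprimitive by showing $J(R)=\{0\}$, arguing by contradiction from a hypothetical nonzero $x\in J(R)$. The guiding principle is that a local valuation forces any nonzero radical element into strictly positive valuation, whereas quasi-invertibility (which every radical element enjoys) keeps valuations bounded; multiplicativity of $\nu$ then collides these two facts. The one genuine obstacle is that $\nu$ is multiplicative only in the restricted form of Definition \ref{deflocval}(iii), namely for $x\in Re$ and $y\in eR$ with a \emph{single} $e\in E$. Consequently the technical heart of the argument is to manoeuvre the putative radical element into one corner $eRe$, where this honest multiplicativity becomes available.

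First I would concentrate into a single corner. Given $0\neq x\in J(R)$, the decomposition $R=\bigoplus_{e,f\in E}eRf$ lets me pick $e,f\in E$ with $x':=exf\neq 0$; note $x'\in J(R)$ since $J(R)$ is an ideal. Using connectedness I choose $0\neq w\in fRe$ and set $y:=x'w\in eRe\cap J(R)$. Because $x'\in Rf$ and $w\in fR$, Definition \ref{deflocval}(iii) applies (with idempotent $f$) to give $\nu(y)=\nu(x')+\nu(w)$.

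Next I would establish $\nu(y)\geq 1$, which is exactly where the extra hypothesis $\nu(z)=0\Leftrightarrow z\in\Span(E)\setminus\{0\}$ enters. Since $w\neq 0$ we have $\nu(w)\geq 0$, so it suffices to see $\nu(x')\geq 1$, i.e. $x'\notin\Span(E)$. If $e\neq f$ this is automatic, as sandwiching $\sum_g c_g g\in\Span(E)$ by $e(\cdot)f$ yields $\sum_g c_g\,\delta_{eg}\delta_{gf}\,e=0$, so $eRf\cap\Span(E)=\{0\}$. If $e=f$ and $x'\in\Span(E)$, then $x'=ce$ with $c\in K\setminus\{0\}$, whence $e=c^{-1}x'\in J(R)$; this contradicts the standard fact that $J(R)$ contains no nonzero idempotent (if $g^2=g\in J(R)$, a left quasi-inverse relation $g+h-gh=0$ left-multiplied by $g$ gives $g=0$). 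Hence $\nu(x')\geq 1$, so $\nu(y)\geq 1$ and in particular $y\neq 0$.

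Finally I would exploit quasi-invertibility inside the unital corner $eRe$. Since $y\in J(R)$ it has a two-sided quasi-inverse $w'\in R$; projecting the relations $y+w'-yw'=0=y+w'-w'y$ by $e(\cdot)e$ and using $ey=ye=y$ produces $w_0:=ew'e\in eRe$ with $y+w_0-yw_0=0=y+w_0-w_0y$, so that $z:=e-w_0\in eRe$ satisfies $(e-y)z=e$. Then $yz=z-e$ with $z\neq 0$ (as $(e-y)z=e\neq 0$), so on one hand $\nu(yz)=\nu(z-e)\leq\max\{\nu(z),\nu(e)\}=\nu(z)$, using $\nu(e)=0$ and $\nu(z)\geq 0$; on the other hand, since $y\in Re$ and $z\in eR$, Definition \ref{deflocval}(iii) (with idempotent $e$) gives $\nu(yz)=\nu(y)+\nu(z)\geq 1+\nu(z)>\nu(z)$. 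This contradiction forces $J(R)=\{0\}$, so $R$ is semiprimitive. The only delicate points are the restricted multiplicativity, handled by collapsing everything into $eRe$, and the self-contained construction of the inverse $z\in eRe$ by projecting the ambient quasi-inverse; everything else is routine.
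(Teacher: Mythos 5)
Your proof is correct and follows essentially the same route as the paper: pass from a nonzero element of $J(R)$ to a corner element of $J(R)\cap eRe$ of strictly positive valuation (using connectedness, the restricted multiplicativity of $\nu$, the hypothesis $\nu(x)=0\Leftrightarrow x\in\Span(E)\setminus\{0\}$, and the fact that $J(R)$ contains no nonzero idempotents), then collide quasi-regularity with the valuation. The only difference is cosmetic and lies in the endgame: the paper applies $\nu$ directly to the quasi-regularity relation $x+y=yx$ to force $\nu(y)=0$, hence $y=ke$, and gets the contradiction from $y=(k-1)x$, whereas you build the explicit corner inverse $z=e-w_0$ of $e-y$ and collide $\nu(yz)=\nu(y)+\nu(z)\geq 1+\nu(z)$ with $\nu(z-e)\leq\nu(z)$, which incidentally avoids the paper's second use of the hypothesis.
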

\begin{proof}
Let $\nu$ be the local valuation on $(R,E)$ such that $\nu(x)=0$ iff $x\in \Span(E)\setminus\{0\}$. Assume that the Jacobson radical $J$ of $R$ is not zero. Since $R=\bigoplus\limits_{v,w\in E}eRf$, there are $e,f\in E$ and an $x'\in J\cap eRf\setminus\{0\}$. Since $R$ is connected, we can choose an element $z\in fRe\setminus\{0\}$. Then $x:=x'z\in J\cap eRe\setminus\{0\}$ since $\nu(x)=\nu(x'z)=\nu(x')+\nu(z)\geq 0$. Since $J$ does not contain any nonzero idempotents, it follows that $\nu(x)>0$ (if $\nu(x)=0$, then $x=ke$ for some $k\in K$ and hence $J$ contains the nonzero idempotent $e$). Since $x\in J$, we have that $x$ is left quasi-regular, i.e. there is a $y\in R$ such that $x+y=yx$. By multiplying $e$ from the right and from the left one gets $x+eye=eyex$. Hence we may assume that $y\in eRe$. It follows that 
\[\max\{\nu(x),\nu(y)\}\geq \nu(x+y)=\nu(yx)=\nu(x)+\nu(y).\]
This implies that $\nu(y)=0$ and hence $y=ke$ for some $k\in K$. It follows that $y=yx-x=kex-x=kx-x=(k-1)x$. But this yield a contradiction since $\nu(y)=0$ but either $\nu((k-1)x)=-\infty$, if $k=1$, or $\nu((k-1)x)=\nu(x)>0$, if $k\neq 1$ (note that $\nu((k-1)x)=\nu((k-1)ex)=\nu((k-1)e)+\nu(x)$). Thus the Jacobson radical of $R$ is zero.
\end{proof}

\subsection{Applications to Leavitt path algebras of hypergraphs}

In this subsection $H$ denotes a hypergraph. Note that $(L_K(H),H^0)$ is a ring with enough idempotents. Set $X:=\{v,h_{ij}, h_{ij}^*\mid v\in H^0, h\in H^1, 1\leq i\leq |s(h)|, 1\leq j\leq |r(h)|\}$. Let $K\X$ be the free algebra generated by $X$ and $K\X_{\nod}$ the subspace of $K\X$ generated by the nod-paths. By Theorem \ref{thmbasis} there is an isomorphism $\NF:L_K(H)\rightarrow K\X_{\nod}$ of $K$-vector spaces. Note that for an $a\in L_K(H)$, its normal representative $\NF(a)$ can be obtained by taking an arbitrary representative of $a$ in $K\X$ and then applying reductions corresponding to the relations (i')-(v') in the proof of Theorem \ref{thmbasis} (i.e. replacing an occurrence of a word in the representative of $a$ that equals a LHS of one the relations (i')-(v') by the correponding RHS) in an arbitrary order until it is no longer possible.  For an $a\in L_K(H)$ we define its {\it support} $\supp(a)$ as the set of all nod-paths which appear in $\NF(a)$ with nonzero coefficient. Recall that if $p=x_1\dots x_n$ is a nod-path, then its length $|p|$ is $n$ if $x_1,\dots,x_n\in X\setminus H^0$ and $0$ if $n=1$ and $x_1\in H^0$. 

\begin{definition}
$H$ satisfies {\it Condition (LV)} if $|s(h)|, |r(h)|\geq 2$ for any $h\in H^1$.
\end{definition}

\begin{theorem}\label{thmlocval}
If $H$ satisfies Condition (LV), then the map
\begin{align*}
\nu:L_K(H)&\rightarrow \N_0\cup\{-\infty\}\\
a&\mapsto \max\{|p|\ | \ p\in \supp(a)\}.
\end{align*}
is a local valuation on $(L_K(H), H^0)$. Here we use the convention $\max(\emptyset)=-\infty$.
\end{theorem}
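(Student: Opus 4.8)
The plan is to verify the three defining conditions of a local valuation from Definition \ref{deflocval} for the map $\nu(a)=\max\{|p|\mid p\in\supp(a)\}$. Condition (i) is immediate: $\nu(a)=-\infty$ exactly when $\supp(a)=\emptyset$, which by Theorem \ref{thmbasis} happens precisely when $\NF(a)=0$, i.e. $a=0$. For condition (ii), I would use the equivalent formulation (iia)+(iib) noted after Definition \ref{defval}. Property (iib), $\nu(a)=\nu(-a)$, is clear since negation does not change which nod-paths occur with nonzero coefficient. Property (iia), $\nu(a+b)\leq\max\{\nu(a),\nu(b)\}$, follows because $\NF$ is linear (Theorem \ref{thmbasis}), so $\supp(a+b)\subseteq\supp(a)\cup\supp(b)$; every nod-path occurring in $a+b$ already occurs in $a$ or in $b$, hence its length is bounded by $\max\{\nu(a),\nu(b)\}$.

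The essential content is condition (iii): for $v\in H^0$, $a\in L_K(H)v$ and $b\in vL_K(H)$, one must show $\nu(ab)=\nu(a)+\nu(b)$. The inequality $\nu(ab)\leq\nu(a)+\nu(b)$ should hold without any hypothesis, because any nod-path in $\NF(ab)$ arises from reducing products $pq$ with $p\in\supp(a)$, $q\in\supp(b)$, and the reductions (i')--(v') never increase length: relations (i'),(ii'),(iii') either delete letters or send a word to $0$, while (iv') and (v') replace a length-two word by a $K$-linear combination of terms each of length $\leq 2$. So each reduction weakly decreases length, giving $|p'|\le|pq|=|p|+|q|\le\nu(a)+\nu(b)$ for every surviving nod-path $p'$. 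The hard part will be the reverse inequality $\nu(ab)\geq\nu(a)+\nu(b)$, which is exactly where Condition (LV) must enter. Here I expect the argument to single out nod-paths $p\in\supp(a)$ and $q\in\supp(b)$ realizing the maxima $\nu(a)=|p|$ and $\nu(b)=|q|$, and to track what happens to the concatenation $pq$ under normalization. The difficulty is that cancellation could in principle kill the top-length term; the role of $|s(h)|,|r(h)|\geq 2$ is presumably to guarantee that a ``leading'' product of maximal-length nod-paths survives reduction and is not cancelled by other cross terms.

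Concretely, I would choose $p,q$ of maximal length, write $p=\dots x$ ending in the letter $x$ and $q=y\dots$ beginning with $y$, where $x,y$ are edge-symbols at the common vertex $v$. The only way $pq$ fails to already be a nod-path is that the junction $xy$ is forbidden, i.e. $x=h_{i1}$, $y=h_{j1}^*$ (or the starred analogue), so reduction (iv')/(v') applies at the seam. The key point enabled by Condition (LV) is that since $|s(h)|,|r(h)|\geq 2$, the relations (iv') and (v') expand $h_{i1}h_{j1}^*$ into $\delta_{ij}s(h)_i-\sum_{k=2}^{|r(h)|}h_{ik}h_{jk}^*$, a sum that genuinely contains length-two terms $h_{ik}h_{jk}^*$ with $k\geq 2$; these are \emph{not} forbidden words, so the resulting nod-path has the same length $|p|+|q|$ as $pq$. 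I would then argue that among all contributions to $\NF(ab)$, a term of length exactly $\nu(a)+\nu(b)$ appears with nonzero coefficient and cannot be cancelled, because any two distinct source pairs $(p,q)$ of maximal length produce reduced terms that remain distinct as nod-paths (distinct leading segments stay distinct after the seam reduction). This non-cancellation step is the genuine obstacle, and making it rigorous will require a careful bookkeeping of how the seam reduction interacts with the surrounding maximal-length letters, using Lemma \ref{lemshc} or the explicit ambiguity resolution from the proof of Theorem \ref{thmbasis} to control the reduced forms.
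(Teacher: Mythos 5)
Your setup matches the paper's proof in every routine respect: conditions (i) and (ii) of Definition \ref{deflocval}, the upper bound $\nu(ab)\leq\nu(a)+\nu(b)$ via length-non-increasing reductions, and the key observation that under Condition (LV) the seam reduction of a forbidden junction $h_{i1}h_{j1}^*$ produces the full-length, non-forbidden terms $h_{ik}h_{jk}^*$ with $k\geq 2$. But the step you yourself flag as the ``genuine obstacle'' is resolved by an argument that is false: your claim that any two distinct maximal-length source pairs $(p,q)$ produce reduced terms that remain distinct as nod-paths fails precisely in the situation the paper's proof is built to handle. Concretely, suppose $\supp(a)$ contains both $p=x_1\dots x_{\nu(a)-1}h_{i1}$ and $p'=x_1\dots x_{\nu(a)-1}h_{i2}$, and $\supp(b)$ contains both $q=h_{j1}^*y_2\dots y_{\nu(b)}$ and $q'=h_{j2}^*y_2\dots y_{\nu(b)}$, with coefficients $\lambda,\lambda',\mu,\mu'$. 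The seam reduction of $pq$ contributes the nod-path $x_1\dots x_{\nu(a)-1}h_{i2}h_{j2}^*y_2\dots y_{\nu(b)}$ with coefficient $-\lambda\mu$, while the direct, already-normal product $p'q'$ contributes the \emph{same} nod-path with coefficient $+\lambda'\mu'$; if $\lambda'\mu'=\lambda\mu$, the full-length term you rely on cancels. So a seam-reduced term can coincide with an unreduced cross product, and ``distinct leading segments stay distinct'' is simply not true across the reduced-versus-direct divide. (Within the seam-reduced terms alone your distinctness claim does hold — two forbidden-seam pairs reduce to distinct words — but that is not where the cancellation threat lies.)

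The paper closes exactly this gap with a swap maneuver (its Cases 2.2 and 3.2): when the reduced term from $(p_k,q_l)$ coincides with some direct product $p_{k'}q_{l'}$, one abandons that term and instead exhibits the mixed product $p_kq_{l'}=x_1\dots x_{\nu(a)-1}h_{i1}h_{j2}^*y_2\dots y_{\nu(b)}$. This word is already a nod-path, since only $h_{i1}h_{j1}^*$ and $h_{1i}^*h_{1j}$ are forbidden (Definition \ref{defnod}); it occurs in $\NF(ab)$ with coefficient $\lambda_k\mu_{l'}\neq 0$; and nothing can cancel it, because a type-(iv') seam reduction only produces terms $h_{im}h_{jm}^*$ with \emph{equal} second indices $m\geq 2$, a type-(v') reduction produces the starred-then-unstarred pattern, and any other direct product of elements of $\supp(a)\times\supp(b)$ yielding this word would force the same pair $(p_k,q_{l'})$. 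Without this case split — Case 2.1/3.1 (no coincidence, the reduced term survives) versus Case 2.2/3.2 (coincidence, swap to the mixed product) — the lower bound $\nu(ab)\geq\nu(a)+\nu(b)$ does not follow, so your proposal has a genuine gap at its central step even though the surrounding architecture is the same as the paper's.
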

\begin{proof}
Obviously conditions (i) and (ii) in Definition \ref{deflocval} are satisfied. It remains to show that condition (iii) is satisfied. Let $v\in H^0$, $a\in L_K(H)v$ and $b\in vL_K(H)$. If one of the terms $\nu(a)$ and $\nu(b)$ equals $0$ or $-\infty$, then clearly  $\nu(ab)=\nu(a)+\nu(b)$. Suppose now $\nu(a),\nu(b)\geq 1$. Clearly $\nu(ab)\leq \nu(a)+\nu(b)$ since a reduction preserves or decreases the length of a monomial. It remains to show that $\nu(ab)\geq \nu(a)+\nu(b)$. Let 
\[p_k=x^k_1\dots x^k_{\nu(a)}~(1\leq k \leq r)\]
be the elements of $\supp(a)$ with maximal length (namely $\nu(a)$) and 
\[q_l=y^l_1\dots y^l_{\nu(b)}~(1\leq l \leq s)\]
be the elements of $\supp(b)$ with maximal length (namely $\nu(b)$). We assume that the $p_k$'s are pairwise distinct and also that the $q_l$'s are pairwise distinct. Note that $x^k_i, y^l_j\in X\setminus H^0$ for any $i,j,k,l$ since $\nu(a),\nu(b)\geq 1$. Since $\NF$ is a linear map, we have 
\begin{displaymath}
\NF(p_k q_l)= \left\{
\begin{array}{ll}
p_k q_l, & \text{if }x^k_{\nu(a)}y^l_1\text{ is not forbidden},\\
\\
\NF(\delta_{ij}x^k_1\dots x^k_{\nu(a)-1}s(h)_iy^l_2\dots y^l_{\nu(b)})\\-\sum\limits_{k=2}^{|r(h)|}x^k_1\dots x^k_{\nu(a)-1}h_{ik}h_{jk}^*y^l_2\dots y^l_{\nu(b)}, & \text{if }x^k_{\nu(a)}y^l_1=h_{i1}h_{j1}^*,\\
\\
\NF(\delta_{ij}x^k_1\dots x^k_{\nu(a)-1}r(h)_iy^l_2\dots y^l_{\nu(b)})\\-\sum\limits_{k=2}^{|s(h)|}x^k_1\dots x^k_{\nu(a)-1}h_{ki}^*h_{kj}y^l_2\dots y^l_{\nu(b)}, & \text{if }x^k_{\nu(a)}y^l_1=h_{1i}^*h_{1j}.
\end{array}\right.
\end{displaymath}

\medskip

\begin{enumerate}

\item[Case 1]   {\it Assume that $x^k_{\nu(a)}y^l_1$ is not forbidden for any $k,l$}.\\
Then $p_k q_l\in \supp(ab)$ for any $k,l$. It follows that $\nu(ab)\geq |p_k q_l|=\nu(a)+\nu(b)$.
 
\medskip 

\item[Case 2] {\it Assume that there are $k,l$ such that $x^k_{\nu(a)}y^l_1=h_{i1}h_{j1}^*$ for some $h\in H^1$ and $1\leq i,j\leq |s(h)|$.}

\medskip

\begin{enumerate}

\item [Case 2.1] {\it Assume $p_{k'} q_{l'}\neq x^k_1\dots x^k_{\nu(a)-1}h_{i2}h_{j2}^*y^l_2\dots y^l_{\nu(b)}$ for any $k',l'$.}\\
Then
\[x^k_1\dots x^k_{\nu(a)-1}h_{i2}h_{j2}^*y^l_2\dots y^l_{\nu(b)}\in \supp(ab)\]
since it does not cancel with another term. It follows that $\nu(ab)\geq \nu(a)+\nu(b)$. 

\medskip 

\item[Case 2.2]  {\it Assume $p_{k'}  q_{l'}= x^k_1\dots x^k_{\nu(a)-1}h_{i2}h_{j2}^*y^l_2\dots y^l_{\nu(b)}$ for some $k',l'$.}\\
One checks easily that in this case 
\[p_k  q_{l'}=x^k_1\dots x^k_{\nu(a)-1}h_{i1}h_{j2}^*y^l_2\dots y^l_{\nu(b)}\in \supp(ab).\]
It follows that $\nu(ab)\geq \nu(a)+\nu(b)$.

\end{enumerate}

\medskip 
\item[Case 3] {\it Assume that there are $k,l$ such that $x^k_{\nu(a)}y^l_1=h_{1i}^*h_{1j}$ for some $h\in h^1$ and $1\leq i,j\leq |r(h)|$.}

\medskip

\begin{enumerate}

\item [Case 3.1] {\it Assume $p_{k'} q_{l'}\neq x^k_1\dots x^k_{\nu(a)-1}h_{2i}^*h_{2j}y^l_2\dots y^l_{\nu(b)}$ for any $k',l'$. }\\
Then
\[x^k_1\dots x^k_{\nu(a)-1}h_{2i}^*h_{2j}y^l_2\dots y^l_{\nu(b)}\in \supp(ab)\]
since it does not cancel with another term. It follows that $\nu(ab)\geq \nu(a)+\nu(b)$. 

\medskip 

\item[Case 3.2]  {\it Assume $p_{k'}  q_{l'}= x^k_1\dots x^k_{\nu(a)-1}h_{2i}^*h_{2j}y^l_2\dots y^l_{\nu(b)}$ for some $k',l'$.}\\
One checks easily that in this case 
\[p_k  q_{l'}=x^k_1\dots x^k_{\nu(a)-1}h_{1i}^*h_{2j}y^l_2\dots y^l_{\nu(b)}\in \supp(ab)\]
It follows that $\nu(ab)\geq \nu(a)+\nu(b)$.

\end{enumerate}
\end{enumerate}
Hence condition (iii) also is satisfied and thus $\nu$ is a local valuation on $(L_K(H),H^0)$.
\end{proof}

\begin{corollary}\label{corlocval1}
Suppose that $H$ satisfies Condition (LV). Then $L_K(H)$ is nonsingular. 
\end{corollary}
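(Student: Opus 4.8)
The plan is to deduce this immediately by stringing together the two general results established earlier in the section, since Condition (LV) is precisely the hypothesis under which the relevant local valuation exists. First I would recall the observation made at the start of this subsection that $(L_K(H), H^0)$ is a ring with enough idempotents; this is what allows the local-valuation machinery of Subsection 7.1 to be applied to $L_K(H)$. Next, because $H$ satisfies Condition (LV), Theorem \ref{thmlocval} supplies a local valuation $\nu$ on $(L_K(H), H^0)$, namely the map sending each $a$ to $\max\{|p| \mid p \in \supp(a)\}$.

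With a local valuation in hand, the conclusion follows directly from Proposition \ref{proplocval1}, which states that every ring with enough idempotents admitting a local valuation is nonsingular. Applying that proposition to $(L_K(H), H^0)$ yields that $L_K(H)$ is nonsingular, which is exactly the assertion of the corollary. I do not expect any genuine obstacle at this stage: all the substantive work has already been carried out upstream. The real labor lives in Theorem \ref{thmlocval}, where multiplicativity of $\nu$ on products $a b$ with $a \in L_K(H)v$ and $b \in v L_K(H)$ is verified through the case analysis ensuring that a maximal-length term survives reduction even when the junction $x^k_{\nu(a)} y^l_1$ is forbidden; and in Proposition \ref{proplocval1}, where one checks for $x \neq 0$ that $\ann(x) \cap Re = \{0\}$ for an idempotent $e$ with $ex \neq 0$, so that $\ann(x)$ fails to be essential. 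Consequently the proof I would write amounts to little more than invoking these two results in sequence, together with the remark that $(L_K(H),H^0)$ has enough idempotents.
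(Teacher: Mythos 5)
Your proposal is correct and matches the paper's own argument exactly: the paper proves Corollary \ref{corlocval1} by citing Theorem \ref{thmlocval} (which supplies the local valuation on $(L_K(H),H^0)$ under Condition (LV)) together with Proposition \ref{proplocval1}. Your additional remarks correctly identify where the substantive work lies, namely upstream in those two results.
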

\begin{proof}
Follows from Proposition \ref{proplocval1} and Theorem \ref{thmlocval}.
\end{proof}

Recall that a d-path is a path in the double graph $\hat E$ of the directed graph $E$ associated to $H$. We call $H$ {\it connected}, if for any $v,w\in H^0$ there is a d-path $p$ such that $s(p)=v$ and $r(p)=w$. 
\begin{corollary}\label{corlocval2}
Suppose that $H$ satisfies Condition (LV) and is connected. Then $L_K(H)$ is a prime ring. 
\end{corollary}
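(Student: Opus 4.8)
The plan is to apply Proposition~\ref{proplocval2}, which states that a nonzero, connected ring with enough idempotents that admits a local valuation is prime. The point is that all three hypotheses are already within reach. First I would observe that $(L_K(H),H^0)$ is a ring with enough idempotents, as noted at the start of this subsection, and that it is nonzero since $H$ is nonempty. Next, by Theorem~\ref{thmlocval}, the hypothesis that $H$ satisfies Condition~(LV) provides a local valuation $\nu$ on $(L_K(H),H^0)$. So the only genuine work is to verify that $(L_K(H),H^0)$ is a \emph{connected} ring with enough idempotents, i.e.\ that $vL_K(H)w\neq\{0\}$ for any two vertices $v,w\in H^0$.

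To establish connectedness I would exploit the combinatorial hypothesis that $H$ is connected in the sense defined just before the statement: for any $v,w\in H^0$ there is a d-path $p$ in $\hat E$ with $s(p)=v$ and $r(p)=w$. The natural candidate for a nonzero element of $vL_K(H)w$ is the image of such a d-path $p$ in $L_K(H)$. I would first reduce $p$ to a nod-path: if $p$ is not already normal, one applies the reduction system from the proof of Theorem~\ref{thmbasis}. The key subtlety is that a reduction could in principle collapse $p$ to zero, so I would instead argue directly that there exists a \emph{nonzero} element of $vL_K(H)w$. Concretely, by relations (ii)--(ii') the image of any d-path $p$ from $v$ to $w$ satisfies $vp=p=pw$, so $p\in vL_K(H)w$; it then suffices to exhibit one such $p$ whose normal form $\NF(p)$ is nonzero. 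Since the nod-paths form a basis by Corollary~\ref{corbasis}, a nod-path from $v$ to $w$ is automatically a nonzero element of $vL_K(H)w$, so I would aim to produce a \emph{nod-path} (not merely a d-path) from $v$ to $w$.

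The main obstacle is therefore this upgrade from ``d-path'' to ``nod-path'': a shortest d-path from $v$ to $w$ need not be normal if it contains a forbidden subword of the form $h_{i1}h_{j1}^*$ or $h_{1i}^*h_{1j}$. I would handle this by taking a d-path $p$ from $v$ to $w$ of minimal length and arguing that a minimal-length d-path cannot contain a forbidden subword: if $x_m x_{m+1}$ were forbidden, say $x_m x_{m+1}=h_{i1}h_{j1}^*$, then $r(x_m)=r(x_{m+1})$ forces $s(x_m)=s(x_{m+1})$ (both equal $s(h)_i$ when $i=j$) and one could excise this segment or reroute to obtain a strictly shorter d-path between the same endpoints, contradicting minimality --- here one uses that the source and range of the offending edges coincide so that the shortened word is still a legitimate d-path. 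After dispensing with the forbidden subwords, $p$ is a nod-path, hence $\NF(p)=p\neq 0$ by Corollary~\ref{corbasis}, giving $vL_K(H)w\neq\{0\}$.

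With connectedness in hand, the conclusion is immediate: $(L_K(H),H^0)$ is a nonzero, connected ring with enough idempotents carrying a local valuation, so Proposition~\ref{proplocval2} yields that $L_K(H)$ is a prime ring. I expect the connectedness verification, and within it the minimal-length argument ruling out forbidden subwords, to be the only nonroutine part; everything else is a direct invocation of the already-established results.
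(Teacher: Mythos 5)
Your overall strategy is the paper's: check that $(L_K(H),H^0)$ is a nonzero connected ring with enough idempotents carrying the local valuation of Theorem \ref{thmlocval}, then invoke Proposition \ref{proplocval2}. But your verification of connectedness has a genuine gap: the claim that a minimal-length d-path from $v$ to $w$ contains no forbidden subword is false. If $x_mx_{m+1}=h_{i1}h_{j1}^*$ with $i\neq j$, then in $\hat E$ one has $s(x_m)=s(h)_i$ and $r(x_{m+1})=s(h)_j$ (what actually holds is $r(x_m)=s(x_{m+1})=r(h)_1$, not your ``$r(x_m)=r(x_{m+1})$ forces $s(x_m)=s(x_{m+1})$''), so excising the segment does not leave a legitimate d-path, and the natural reroute $h_{i1}h_{j1}^*\mapsto h_{i2}h_{j2}^*$ (available by Condition (LV)) produces a path of the \emph{same} length, not a strictly shorter one --- so there is no contradiction with minimality. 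Concretely, in the hypergraph of Example \ref{ex1} the word $h_{11}h_{21}^*$ is a minimal-length d-path from $v_1$ to $v_2$ and contains a forbidden subword, so your key claim fails there. The conclusion you want (a nod-path from $v$ to $w$ exists) is true and your argument is repairable --- e.g.\ induct lexicographically on the pair (length, number of forbidden occurrences), fixing the leftmost forbidden pair by excision when $i=j$ and by the index-$2$ replacement $h_{i1}h_{j1}^*\mapsto h_{i2}h_{j2}^*$ (resp.\ $h_{1i}^*h_{1j}\mapsto h_{2i}^*h_{2j}$) when $i\neq j$, checking that any newly created forbidden pair is compensated by one destroyed --- but as written the $i\neq j$ case is simply not covered.

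The paper sidesteps this combinatorics entirely, and its route is worth internalizing: for \emph{any} d-path $p=x_1\dots x_n$ from $v$ to $w$, each letter $x_k$ is itself a nod-path with $\nu(x_k)=1$ (and $\nu(u)=0$ for $u\in H^0$), and since consecutive letters are linked through vertices of $H^0$, repeated application of condition (iii) in Definition \ref{deflocval} gives $\nu(p)=\nu(x_1)+\dots+\nu(x_n)=|p|\geq 0$; hence $p\neq 0$ in $L_K(H)$ and $vL_K(H)w\neq\{0\}$ without ever upgrading $p$ to a nod-path. In other words, the local valuation does double duty --- it yields primeness via Proposition \ref{proplocval2} \emph{and} the nonvanishing needed for connectedness --- whereas your detour through Corollary \ref{corbasis} forces exactly the forbidden-subword analysis in which your proof breaks down.
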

\begin{proof}
Cleary $L_K(H)$ is not the zero ring since $H$ is not the empty hypergraph. Let $v, w\in H^0$. Since $H$ is connected, there is a d-path $p$ such that $s(p)=v$ and $r(p)=w$. Let $\nu$ be the local valuation on $(L_K(H),H^0)$ defined in Theorem \ref{thmlocval}. Clearly $\nu(p)=|p|\geq 0$ since $\nu(u)=0$ for any $u\in H^0$ and $\nu(h_{ij})=\nu(h_{ij}^*)=1$ for any $h\in H^1$, $1\leq i\leq |s(h)|$ and $1\leq j\leq |r(h)|$. Hence $vL_K(H)w\neq \{0\}$ for any $v, w\in H^0$ and therefore $(L_K(H),H^0)$ is a connected ring with enough idempotents. It follows from Proposition \ref{proplocval2} that $L_K(H)$ is a prime ring.
\end{proof}

\begin{corollary}\label{corlocval3}
Suppose that $H$ satisfies Condition (LV) and that $H^1\neq\emptyset$. Then $L_K(H)$ is not von Neumann regular. 
\end{corollary}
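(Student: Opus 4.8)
The plan is to deduce this corollary directly from Proposition \ref{proplocval3} together with Theorem \ref{thmlocval}, exactly as Corollaries \ref{corlocval1} and \ref{corlocval2} were deduced from their respective propositions. Since $H$ satisfies Condition (LV), Theorem \ref{thmlocval} furnishes a local valuation $\nu$ on $(L_K(H),H^0)$ given by $\nu(a)=\max\{|p|\mid p\in\supp(a)\}$. By Proposition \ref{proplocval3}, to conclude that $L_K(H)$ is not von Neumann regular it suffices to verify that this $\nu$ is \emph{nontrivial}, i.e. that there exists a nonzero element on which $\nu$ takes a positive value.

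First I would exhibit such an element explicitly. By hypothesis $H^1\neq\emptyset$, so pick any $h\in H^1$; Condition (LV) guarantees $|s(h)|,|r(h)|\geq 2$, so in particular the generator $h_{11}$ exists. I claim $h_{11}$ works: it is a nod-path of length $1$, so by Corollary \ref{corbasis} it is a nonzero element of $L_K(H)$, and $\NF(h_{11})=h_{11}$ so that $\supp(h_{11})=\{h_{11}\}$ and hence $\nu(h_{11})=|h_{11}|=1>0$. Therefore $\nu$ is nontrivial.

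Applying Proposition \ref{proplocval3} to the ring with enough idempotents $(L_K(H),H^0)$ equipped with the nontrivial local valuation $\nu$ then yields immediately that $L_K(H)$ is not von Neumann regular, completing the proof.

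There is essentially no obstacle here: the content has already been isolated in Theorem \ref{thmlocval} (construction of the local valuation) and Proposition \ref{proplocval3} (nontriviality forces failure of von Neumann regularity), so the only thing to check is the trivial observation that a single edge-generator has positive valuation, which uses Condition (LV) merely to ensure $H^1\neq\emptyset$ produces an honest generator $h_{11}$ and that the valuation $\nu$ of Theorem \ref{thmlocval} is available. The one point worth stating cleanly is why $h_{11}\neq 0$ in $L_K(H)$, which follows from the linear-basis result (Theorem \ref{thmbasis} / Corollary \ref{corbasis}) since $h_{11}$ is itself a nod-path.
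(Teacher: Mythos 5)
Your proposal is correct and matches the paper's own proof exactly: the paper likewise takes the local valuation $\nu$ from Theorem \ref{thmlocval}, picks $h\in H^1$, observes $\nu(h_{11})=1$ so that $\nu$ is nontrivial, and invokes Proposition \ref{proplocval3}. Your extra remark that $h_{11}\neq 0$ follows from Corollary \ref{corbasis} since $h_{11}$ is a nod-path is a welcome (if implicit in the paper) justification.
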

\begin{proof}
Let $\nu$ be the local valuation on $(L_K(H),H^0)$ defined in Theorem \ref{thmlocval}. Choose an $h\in H^1$. Then $\nu(h_{11})=1$ and therefore $\nu$ is nontrivial. It follows from Proposition \ref{proplocval3} that $L_K(H)$ is not von Neumann regular.
\end{proof}

\begin{corollary}\label{corlocval4}
Suppose that $H$ satisfies Condition (LV) and is connected. Then $L_K(H)$ is semiprimitive. 
\end{corollary}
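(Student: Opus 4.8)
The plan is to deduce this corollary directly from Proposition \ref{proplocval4}, applied to the ring with enough idempotents $(L_K(H), H^0)$ equipped with the local valuation $\nu$ from Theorem \ref{thmlocval}. Since $H$ satisfies Condition (LV), Theorem \ref{thmlocval} guarantees that $\nu$ really is a local valuation on $(L_K(H), H^0)$, and $L_K(H)$ is a $K$-algebra, so the structural hypotheses of Proposition \ref{proplocval4} are in place. What remains is to check the two remaining assumptions of that proposition: connectedness of $(L_K(H), H^0)$, and the distinguishing property that $\nu(x) = 0$ if and only if $x \in \Span(H^0) \setminus \{0\}$.

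First I would record that $(L_K(H), H^0)$ is connected. This is exactly the computation carried out in the proof of Corollary \ref{corlocval2}: for any $v, w \in H^0$, connectedness of $H$ provides a d-path $p$ with $s(p) = v$ and $r(p) = w$, and since $\nu(p) = |p| \geq 0$ this d-path is a nonzero element of $vL_K(H)w$. Hence $vL_K(H)w \neq \{0\}$ for all $v,w \in H^0$, which is precisely connectedness of the ring with enough idempotents.

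The key remaining step is the characterization $\nu(x) = 0 \iff x \in \Span(H^0) \setminus \{0\}$. The crucial observation is that, by Definition \ref{defpath}, a nod-path has length $0$ exactly when it is a single vertex $v \in H^0$. Therefore, for $x \in L_K(H)$, the equality $\nu(x) = \max\{|p| \mid p \in \supp(x)\} = 0$ says that $\supp(x)$ is nonempty (so $x \neq 0$) and consists entirely of length-$0$ nod-paths, i.e. of vertices; equivalently, $\NF(x)$ is a nonzero $K$-linear combination of elements of $H^0$, which by Theorem \ref{thmbasis} means exactly that $x \in \Span(H^0) \setminus \{0\}$. Conversely, any nonzero element of $\Span(H^0)$ is already in normal form as a nonzero combination of vertices and hence has valuation $0$.

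With connectedness and this characterization established, Proposition \ref{proplocval4} applies verbatim and yields that $L_K(H)$ is semiprimitive. I expect no real obstacle here: the whole argument is a short verification that the valuation of Theorem \ref{thmlocval} takes the value $0$ precisely on the nonzero span of the idempotents, and the only point that deserves to be stated carefully is the identification of length-$0$ nod-paths with the vertices of $H$ via Theorem \ref{thmbasis}.
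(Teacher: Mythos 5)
Your proposal is correct and follows exactly the paper's route: the paper likewise invokes Proposition \ref{proplocval4} with the local valuation $\nu$ of Theorem \ref{thmlocval}, cites the connectedness computation from the proof of Corollary \ref{corlocval2}, and asserts (as ``clear'') the characterization $\nu(x)=0 \Leftrightarrow x\in\Span(H^0)\setminus\{0\}$, which you spell out carefully via the identification of length-$0$ nod-paths with vertices under Theorem \ref{thmbasis}. The only difference is that you make explicit a verification the paper leaves implicit; there is no gap.
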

\begin{proof}
Let $\nu$ be the local valuation on $(L_K(H),H^0)$ defined in Theorem \ref{thmlocval}. Then clearly $\nu(x)=0$ iff $x\in \Span(H^0)\setminus\{0\}$ where $\Span(H^0)$ denotes the linear subspace of $L_K(H)$ spanned by $H^0$. Moreover, $(L_K(H),H^0)$ is connected since $H$ is connected (see the proof of Corollary \ref{corlocval2}). It follows from Proposition \ref{proplocval4} that $L_K(H)$ semiprimitive.
\end{proof}

\begin{example}\label{ex111}
Consider again the hypergraph 
\begin{center}
\begin{tikzpicture}
  \node (H) at (0,-1.5) { $H:$ };
  \node (v_1) at (2,0)   { $v_1$ };
  \node (v_2) at (2,-3) { $v_2$ };
  \node (w_1) at (8,0)  { $w_1$ };
  \node (w_2) at (8,-3)  { $w_2$ };
  \connectFour[
  @ratio=.5,
  @pos1=.7,
  @pos2=.7,
  @edge 3=->,
  @edge 4=->,
  @edge=thick
  ]{v_1}{v_2}{w_1}{w_2}{h}
\end{tikzpicture}
\end{center}
from Examples \ref{ex1} and \ref{ex11}. As shown in Example \ref{ex11}, $\GKdim L_K(H)=1$. Clearly $H$ satisfies Condition (LV) and is connected. It follows from the Corollaries \ref{corlocval1}, \ref{corlocval2}, \ref{corlocval3} and \ref{corlocval4} that $L_K(H)$ is nonsingular, prime, not von Neumann regular and semiprimitive. Since $L_K(H)$ is a finitely generated prime algebra of GK dimension one, $L_K(H)$ is fully bounded Noetherian and finitely generated as a module over its center (see \cite{small-warfield}). It follows that $L_K(H)$ is a PI-ring.
\end{example}


\subsection{Classification of the hypergraphs $H$ such that $L_K(H)$ is a domain}

\begin{definition}
A hypergraph $H$ satisfies {\it Condition (B)} if any $h\in H^1$ has the property that either $|s(h)|, |r(h)|\geq 2$ or $|s(h)|=|r(h)|=1$.
\end{definition}

\begin{theorem}
Let $H$ be a hypergraph, then $L_K(H)$ is a domain iff $|H^0|=1$ and $H$ satisfies Condition (B).
\end{theorem}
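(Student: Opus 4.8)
The plan is to prove the two directions separately, using the linear basis of nod-paths (Corollary \ref{corbasis}) as the central tool throughout.

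\medskip

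\textbf{The easy direction ($\Leftarrow$).} First I would assume $|H^0|=1$, say $H^0=\{v\}$, and that $H$ satisfies Condition (B). Since every hyperedge $h$ has a single source and range vertex (namely $v$), Condition (B) forces every $h\in H^1$ to satisfy either $|s(h)|=|r(h)|=1$ or $|s(h)|,|r(h)|\geq 2$. The goal is to exhibit a valuation on $L_K(H)$, so that Lemma \ref{lemval} gives that $L_K(H)$ is a domain. The natural candidate is the length map $\nu(a)=\max\{|p|\mid p\in\supp(a)\}$ on nod-paths, as in Theorem \ref{thmlocval}. The subtlety is that Theorem \ref{thmlocval} only produces a \emph{local} valuation and only under Condition (LV). Here, because $|H^0|=1$, we have $v$ as a two-sided identity, so $L_K(H)$ is unital and $Rv=vR=R$; thus a local valuation on $(L_K(H),\{v\})$ is automatically a genuine valuation. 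The remaining issue is the hyperedges with $|s(h)|=|r(h)|=1$: for such an $h$, relations (iii)--(iv) read $h_{11}h_{11}^*=v=h_{11}^*h_{11}$, so $h_{11}$ is a unit of length $1$, and I must check that the length map still satisfies $\nu(xy)=\nu(x)+\nu(y)$. The key point is that no product of a length-maximal term of $x$ with a length-maximal term of $y$ can undergo a length-decreasing reduction: a reduction of type (4$'$)/(5$'$) can only fire on a forbidden word $h_{i1}h_{j1}^*$ or $h_{1i}^*h_{1j}$, and when $|s(h)|=|r(h)|=1$ the relevant relation $h_{11}h_{11}^*=v$ does lower length, so I must verify that in the unary case these forbidden adjacencies never arise in a nod-path product of maximal terms, or argue directly that the argument of Theorem \ref{thmlocval} (Cases 1--3) goes through with the surviving top-length term always present. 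I expect the cleanest route is to adapt the Case analysis of Theorem \ref{thmlocval} directly, noting that the problematic cancellations occur only when $|r(h)|\geq 2$ (resp.\ $|s(h)|\geq 2$), precisely the situation Condition (LV)/(B) controls, so a surviving maximal-length nod-path always remains.

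\medskip

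\textbf{The hard direction ($\Rightarrow$).} Here I assume $L_K(H)$ is a domain and must deduce $|H^0|=1$ together with Condition (B). The contrapositive is cleaner: if $|H^0|\geq 2$ or if Condition (B) fails, I produce nonzero zero-divisors. If $|H^0|\geq 2$, pick distinct $u,v\in H^0$; by relation (i) these are orthogonal idempotents with $uv=0$, and by Corollary \ref{corbasis} both are nonzero (the trivial nod-paths $u$ and $v$ are distinct basis elements), so $u,v$ are zero-divisors and $L_K(H)$ is not a domain. If Condition (B) fails, there is an $h\in H^1$ with exactly one of $|s(h)|,|r(h)|$ equal to $1$; say $|s(h)|=1$ and $|r(h)|=m\geq 2$ (the mirror case is symmetric via the involution of Proposition \ref{propprop}(ii)). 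Then relation (iv) gives $\sum_{k=1}^{1}h_{ki}^*h_{kj}=\delta_{ij}r(h)_i$, i.e.\ $h_{1i}^*h_{1j}=\delta_{ij}r(h)_i$, so for $i\neq j$ we get $h_{1i}^*h_{1j}=0$ while by Corollary \ref{corbasis} the factors $h_{1i}^*$ and $h_{1j}$ are themselves nonzero nod-paths. This exhibits a zero-divisor, so $L_K(H)$ is not a domain.

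\medskip

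\textbf{Main obstacle and bookkeeping.} The genuinely delicate step is confirming the valuation property in the easy direction, i.e.\ that the length function is multiplicative on the unital algebra $L_K(H)$ when $|H^0|=1$ and Condition (B) holds, including hyperedges with $|s(h)|=|r(h)|=1$. I would handle this by isolating the two types of hyperedges: for $|s(h)|=|r(h)|=1$ the generator $h_{11}$ is invertible with $\nu(h_{11})=\nu(h_{11}^*)=1$ and multiplication by it merely shifts length additively without triggering any collapsing reduction (there is no sum in relations (iii)--(iv) to create cancellation); for $|s(h)|,|r(h)|\geq 2$ the argument is exactly that of Theorem \ref{thmlocval}, whose Case 1--3 analysis guarantees a surviving term of length $\nu(a)+\nu(b)$. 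Assembling these, $\nu$ is a valuation, Lemma \ref{lemval} applies, and $L_K(H)$ is a domain, completing both directions.
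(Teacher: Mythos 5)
Your $\Rightarrow$ direction (via the contrapositive) is exactly the paper's argument: distinct vertices are orthogonal nonzero idempotents, and failure of Condition (B) yields $h_{1i}^*h_{1j}=0$ with both factors nonzero nod-paths, so that part is fine. The gap is in the $\Leftarrow$ direction, and it is not a matter of missing bookkeeping: the length function simply cannot be a valuation on $L_K(H)$ once a hyperedge $h$ with $|s(h)|=|r(h)|=1$ is present. For such an $h$ the relations give $h_{11}h_{11}^*=v=h_{11}^*h_{11}$ (the sums in (iii)--(iv) are singletons), so taking $a=h_{11}$ and $b=h_{11}^*$ you get $\nu(ab)=\nu(v)=0$ while $\nu(a)+\nu(b)=2$. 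Your assertion that multiplication by $h_{11}$ ``merely shifts length additively without triggering any collapsing reduction'' is false precisely here: the reduction $h_{11}h_{11}^*\to v$ is itself length-collapsing, and since there is no second index, no surviving term of top length remains --- the Case 2/Case 3 rescue of Theorem~\ref{thmlocval} is unavailable. Worse, the defect is structural: in any valuation $\nu$ in the sense of Definition~\ref{defval} one has $\nu(v)=0$ (from $v^2=v$) and hence $\nu(h_{11})+\nu(h_{11}^*)=\nu(v)=0$, forcing $\nu(h_{11})=\nu(h_{11}^*)=0$. So no valuation assigning positive value to the unary generators exists, and the plain length function is not a valuation on, e.g., $K[X,X^{-1}]$.

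The paper circumvents the mixed situation that Condition (B) permits by decomposing rather than by a single global valuation: for each $h\in H^1$ it forms the one-hyperedge hypergraph $H_h$ (on the single vertex), observes that $L_K(H)$ is the coproduct of the algebras $L_K(H_h)$, and shows each factor is a domain --- via Theorem~\ref{thmlocval} and Lemma~\ref{lemval} when $|s(h)|,|r(h)|\geq 2$ (the local valuation is a genuine valuation since $|H^0|=1$, as you correctly note), and by the isomorphism $L_K(H_h)\cong K[X,X^{-1}]$ when $|s(h)|=|r(h)|=1$. The domain property of the coproduct then comes from Cohn's theorem \cite[Theorem 3.2]{cohn68} that a free product of $1$-firs over a field is a $1$-fir. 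If you wanted to salvage a valuation-theoretic proof, you would need a weighted length that counts only letters belonging to hyperedges with $|s(h)|,|r(h)|\geq 2$ and assigns weight $0$ to the unary generators, and then redo the entire case analysis of Theorem~\ref{thmlocval} for this weighted function (including new potential cancellations among maximal terms involving the invertible unary generators); as written, your proposal does not do this, so the forward implication of the theorem is unproven.
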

\begin{proof}
If there are distinct $v,w\in H^0$. Then $vw=0$ in $L_K(H)$ by relation (i) in Definition \ref{defhlpa}. But $v,w\neq 0$ in $L_K(H)$ by Theorem \ref{thmbasis} (note that $v$ and $w$ are nod-paths). Hence $L_K(H)$ is not a domain.\\
Suppose now that Condition (B) is not satisfied. Then there is an $h\in H^1$ such that $|s(h)|=1~\land~|r(h)|\geq 2$ or $|s(h)|\geq 2~\land~|r(h)|=1$. We only consider the first case and leave the second to the reader. Choose $i\neq j\in \{1,\dots,|r(h)|\}$ (possible since $|r(h)|\geq 2$). Then $h_{1i}^*h_{1j}=0$ by relation (iv) in Definition \ref{defhlpa}. But $h_{1i}^*,h_{1j}\neq 0$ in $L_K(H)$ by Theorem \ref{thmbasis} (note that $h_{1i}^*$ and $h_{1j}$ are nod-paths). Hence $L_K(H)$ is not a domain.\\
Now suppose that $|H^0|=1$ and $H$ satisfies Condition (B). For any $h\in H^1$ define a hypergraph $H_h=(H_h^0,H_h^1, s_h, r_h)$ by $H_h^0=H^0$, $H_h^1=\{h\}$, $s_h(h)=s(h)$ and $r_h(h)=r(h)$. If $|s(h)|, |r(h)|\geq 2$, then $H_h$ satisfies Condition (LV) and hence $L_K(H_h)$ is a domain by Lemma \ref{lemval} and Theorem \ref{thmlocval} (the local valuation $\nu$ is in fact a valuation since $|H^0|=1$). If $|s(h)|=|r(h)|=1$, then $L_K(H_h)$ is isomorphic to the Laurent polynomial ring $K[X,X^{-1}]$, which is a domain. Hence all the algebras $L_K(H_h)~(h\in H^1)$ are domains. One checks easily that $L_K(H)$ is the coproduct (sometimes also called ``free product'') of the $K$-algebras $L_K(H_h)~(h\in H^1)$. It follows from \cite[Theorem 3.2]{cohn68} that $L_K(H)$ is a domain (note that a ``$1$-fir'' is the same as a domain).
\end{proof}

\section{Additional applications of the linear bases}
\subsection{General results}
In this subsection $(A,E)$ denotes a {\it $K$-algebra with enough idempotents}, i.e. $A$ is a $K$-algebra and $(A,E)$ is a ring with idempotents. We call an element $a\in A$ {\it homogeneous} if $a\in eAf$ for some $e,f\in E$ (recall that $A=\bigoplus\limits_{e,f\in E}eAf$). $B$ denotes a $K$-basis for $A$ which consists of homogeneous elements and contains $E$. Moreover, $\mu$ denotes a map $B\rightarrow \N_0$ which has the property that $\mu(b)=0~\Leftrightarrow~b\in E$.

\begin{definition}
An element $b\in B\cap eAf$ is called {\it left adhesive} if $ab \in B$ for any $a\in B \cap Ae$ and {\it right adhesive} if $bc \in B$ for any $c\in B\cap fA$. An element $b\in B$ is called {\it adhesive} if it is left and right adhesive. A {\it left valuative basis element} is a left adhesive element $b\in B\cap eA$ such that $\mu(ab)=\mu(a)+\mu(b)$ for any $a\in B \cap Ae$. A {\it right valuative basis element} is a right adhesive element $b\in B\cap Af$ such that $\mu(bc)=\mu(b)+\mu(c)$ for any $c\in B\cap fA$. A {\it valuative basis element} is an adhesive element $b\in B\cap eAf$ such that $\mu(abc)=\mu(a)+\mu(b)+\mu(c)$ for any $a\in B \cap Ae$ and $c\in B\cap fA$.
\end{definition}

If $S\in A$ is a subset, then we denote by $I_l(S)$ the left ideal of $A$ generated by $S$, by $I_r(S)$ the right ideal of $A$ generated by $S$ and by $I(S)$ the ideal of $A$ generated by $S$. The following lemma is straightforward to check.

\begin{lemma}\label{lemidbas}
The following is true:
\begin{enumerate}[(i)]
\item If $S_l=\{b_i\in B\cap e_iAf_i\mid i\in \Phi\}$ is a set of left adhesive basis elements, then $I_l(S_l)$ is free with basis $\{ab_i\mid i\in \Phi,a\in B\cap Ae_i\}$. 
\item If $S_r=\{b_i\in B\cap e_iAf_i\mid i\in \Phi\}$ is a set of right adhesive basis elements, then $I_r(S_r)$ is free with basis $\{b_ic\mid i\in\Phi, c\in B\cap f_iA\}$.
\item If $S=\{b_i\in B\cap e_iAf_i\mid i\in \Phi\}$ is a set of adhesive basis elements, then $I(S)$ is free with basis $\{ab_ic\mid i\in\Phi,a\in B\cap Ae_i, c\in B\cap f_iA\}$.
\end{enumerate}
\end{lemma}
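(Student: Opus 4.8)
The plan is to treat the three parts uniformly: first rewrite each generated ideal in a normal form adapted to the idempotents $e_i,f_i$, then expand that normal form in the homogeneous basis $B$ and use adhesiveness to recognise the resulting spanning family as a subset of $B$. The reductions rest on the fact that $(A,E)$ has enough idempotents, so that $e_ib_i=b_i=b_if_i$ for each $i$. First I would observe that for any scalar $k$ one has $kb_i=(ke_i)b_i\in Ab_i$, so the $K$-span of the generators is already absorbed into $\sum_i Ab_i$; since each $b_i=e_ib_i\in Ab_i$ and $\sum_i Ab_i$ is a left ideal, this gives $I_l(S_l)=\sum_i Ab_i$. The same computation on the other side yields $I_r(S_r)=\sum_i b_iA$, and for the two-sided case the identities $xb_i=x\,b_i\,f_i$, $b_iy=e_i\,b_i\,y$ and $kb_i=(ke_i)\,b_i\,f_i$ show that every generator, one-sided product and scalar multiple lies in $Ab_iA$, whence $I(S)=\sum_i Ab_iA$.

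For (i) I would then use that $B$ is a homogeneous basis. Because $A=\bigoplus_{g,h\in E}gAh$ and orthogonality of $E$ forces $B\cap Ae_i$ to be exactly the homogeneous basis vectors right-fixed by $e_i$, the set $B\cap Ae_i$ is a $K$-basis of $Ae_i$. Writing $Ab_i=Ae_ib_i$ and expanding an arbitrary $xe_i\in Ae_i$ in this basis shows $Ab_i=\Span_K\{ab_i\mid a\in B\cap Ae_i\}$, so $I_l(S_l)=\Span_K\{ab_i\mid i\in\Phi,\ a\in B\cap Ae_i\}$. Left adhesiveness of each $b_i$ places every such $ab_i$ in $B$; being a subset of the $K$-basis $B$, the family $\{ab_i\}$ is automatically $K$-linearly independent, and since it spans $I_l(S_l)$ it is a basis. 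Part (ii) is identical with the sides reversed, using right adhesiveness to put each $b_ic$ into $B$.

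For (iii) I would expand $Ab_iA=Ae_i\,b_i\,f_iA$ on both sides at once: writing $xe_i=\sum_a\lambda_a a$ and $f_iy=\sum_c\mu_c c$ with $a\in B\cap Ae_i$ and $c\in B\cap f_iA$ gives $xb_iy=\sum_{a,c}\lambda_a\mu_c\,ab_ic$, hence $I(S)=\Span_K\{ab_ic\mid i\in\Phi,\ a\in B\cap Ae_i,\ c\in B\cap f_iA\}$. Exactly as before, once it is known that every triple product $ab_ic$ lies in $B$, this spanning family sits inside the basis $B$, is therefore $K$-linearly independent, and is a basis of $I(S)$.

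The step I expect to be the crux is precisely this last point: that $ab_ic\in B$ for all admissible $a,c$. Binary adhesiveness of $b_i$ only delivers $ab_i\in B$ and $b_ic\in B$ separately, and in a purely formal basis these two facts do not by themselves force the product $(ab_i)c=a(b_ic)$ to be a single basis vector. The natural way to close the gap is to prove the auxiliary statement that adhesiveness \emph{propagates}, i.e. that for adhesive $b_i$ the element $ab_i$ is again right adhesive (equivalently, $b_ic$ is left adhesive), which immediately yields $ab_ic=(ab_i)c\in B$. In the concrete realisation relevant to this paper, where $A=L_K(H)$, $B$ is the set of nod-paths and membership in $B$ is obstructed only by the length-two forbidden subwords of Definition \ref{defnod}, this propagation is transparent: the only new length-two junctions created in forming $ab_ic$ are the two junctions already controlled by the left- and right-adhesiveness of $b_i$, so no fresh forbidden subword can appear. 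I would therefore phrase the general argument so as to isolate and verify exactly this junction condition, which is the single nontrivial ingredient behind the otherwise routine bookkeeping.
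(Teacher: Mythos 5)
The paper offers no argument at all for this lemma (it is dismissed as ``straightforward to check''), so there is nothing to compare against except the intended routine verification; your treatment of (i) and (ii) is that verification, carried out completely and correctly: local units give $I_l(S_l)=\sum_i Ab_i$, homogeneity of $B$ and orthogonality of $E$ make $B\cap Ae_i$ a $K$-basis of $Ae_i$, so the family $\{ab_i\}$ spans, and left adhesiveness places it inside $B$, whence linear independence. More importantly, the crux you isolate in (iii) is a genuine gap in the lemma as stated, not excess caution: binary adhesiveness really does not imply $ab_ic\in B$, and one can see this fail. Take $A=K\langle x,y\rangle$ with $E=\{1\}$ and let $B$ consist of the words rescaled by nonzero scalars $\lambda_w$, where $\lambda_w$ depends only on the word obtained from $w$ by deleting all leading and trailing occurrences of $y$; put $\lambda_{xyx}=2$ and $\lambda_w=1$ whenever the stripped word is not $xyx$, and let $\mu$ be word length. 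Then $b:=y\in B$ is adhesive, since appending or prepending $y$ leaves the stripped word unchanged, yet the family $\{\beta y\gamma\mid \beta,\gamma\in B\}$ contains both $xyxy$ (from $\beta=x$, $\gamma=xy$) and $2xyxy$ (from $\beta=2xyx$, $\gamma=1$): it is not contained in $B$, is not linearly independent, and is not a basis of $I(b)$. So part (iii) genuinely needs the propagation hypothesis you propose (that $ab_i$ remains right adhesive), and cannot be proved from adhesiveness alone.

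Two further points confirm that your repair is the right one for this paper. First, the paper's own definition of a valuative basis element implicitly presupposes your propagation property: $\mu$ is defined only on $B$, so the requirement $\mu(ab c)=\mu(a)+\mu(b)+\mu(c)$ only makes sense if $abc\in B$; under that reading Lemma~\ref{lemvalbas} and Proposition~\ref{propvalbas1} are sound, and the defect sits solely in Lemma~\ref{lemidbas}(iii) as stated for merely adhesive elements. Second, your junction argument for the nod-path basis needs the caveat $|b_i|\geq 1$: if $b_i=v\in E$ (and every $v\in E$ is trivially adhesive), then $avc=ac$ creates a fresh junction between $a$ and $c$ that nothing controls, so (iii) fails there even concretely. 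This caveat is harmless for the paper, since every element ever fed into (iii) --- $h_{22}h_{22}^*$, $h_{j2}h_{j2}^*$, $h_{22}$, $h_{22}^*$ --- is a nontrivial nod-path whose boundary letters occur in no forbidden word of Definition~\ref{defnod}, so all products $ab_ic$ are nod-paths and the conclusion of the lemma holds in each instance where it is invoked. In short: your (i) and (ii) are complete, your diagnosis of (iii) is correct and sharper than the paper's own account, and stating the propagation property (or the ``boundary letters avoid forbidden words'' condition) as an explicit hypothesis is exactly how the lemma should be fixed.
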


\begin{lemma}\label{lemvalbas}
Distinct left valuative basis elements generate distinct left ideals, distinct right valuative basis elements generate distinct right ideals and distinct valuative basis elements generate distinct ideals.
\end{lemma}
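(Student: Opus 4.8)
The plan is to prove all three statements by one and the same argument, so I would treat the left case in detail and then indicate the symmetric modifications. I argue by contraposition: I take two left valuative basis elements $b,b'$ with $I_l(b)=I_l(b')$ and deduce $b=b'$. Write $b\in B\cap eAf$ and $b'\in B\cap e'Af'$ with $e,e',f,f'\in E$. The main tool is Lemma \ref{lemidbas}(i) applied to the singleton $S_l=\{b\}$, which presents $I_l(b)$ as the free $K$-module with basis $\mathcal{B}_b:=\{ab\mid a\in B\cap Ae\}$, all of whose elements lie in $B$.

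First I would record two elementary facts about $E$. Since the elements of $E$ are pairwise orthogonal idempotents, the only $a\in E$ with $ae=a$ is $a=e$; equivalently $E\cap Ae=\{e\}$, and likewise $E\cap fA=\{f\}$. In particular $e\in B\cap Ae$ and $eb=b$, so $b$ itself is the term $a=e$ of the basis $\mathcal{B}_b$.

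The heart of the argument is the following. Since $b'\in I_l(b')=I_l(b)$ and $\mathcal{B}_b\subseteq B$, I expand $b'$ in the basis $\mathcal{B}_b$ of $I_l(b)$; because $B$ is a $K$-basis of $A$ and $\mathcal{B}_b$ is a subset of $B$ consisting of distinct basis vectors, the uniqueness of the $B$-expansion of the single basis vector $b'$ forces $b'=a_0b$ for a unique $a_0\in B\cap Ae$. By the symmetric statement (using $b\in I_l(b')$) I likewise get $b=a_1b'$ for some $a_1\in B\cap Ae'$. Now the valuative property enters: $\mu(b')=\mu(a_0b)=\mu(a_0)+\mu(b)$ and $\mu(b)=\mu(a_1b')=\mu(a_1)+\mu(b')$. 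Adding these and cancelling $\mu(b)+\mu(b')$ yields $\mu(a_0)+\mu(a_1)=0$; as $\mu$ takes values in $\N_0$, we get $\mu(a_0)=0$, so $a_0\in E$. Combined with $a_0\in Ae$ this gives $a_0=e$, whence $b'=a_0b=eb=b$, the desired contradiction.

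The right-adhesive case is entirely symmetric, using Lemma \ref{lemidbas}(ii) and the factorizations $b'=bc_0$, $b=b'c_1$. For the two-sided case I apply Lemma \ref{lemidbas}(iii): $I(b)$ is free with basis $\{abc\mid a\in B\cap Ae,\ c\in B\cap fA\}$, the same uniqueness argument gives $b'=a_0bc_0$ and $b=a_1b'c_1$, and the identity $\mu(abc)=\mu(a)+\mu(b)+\mu(c)$ forces $\mu(a_0)=\mu(c_0)=0$, hence $a_0=e$, $c_0=f$, and $b'=ebf=b$. I expect the only genuinely delicate step to be the passage from ``$b'$ lies in the ideal $I_l(b)$'' to ``$b'$ is literally one of the generators $a_0b$''; this is exactly where the compatibility of the two bases $\mathcal{B}_b\subseteq B$ (provided by Lemma \ref{lemidbas}) and the fact that $b'$ is itself a single basis vector are essential, while everything else reduces to the short $\mu$-cancellation.
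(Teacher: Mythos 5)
Your proof is correct and follows essentially the same route as the paper: both expand the basis vector $b'$ in the free basis of the ideal generated by $b$ supplied by Lemma \ref{lemidbas}, use that this basis sits inside $B$ so that uniqueness of $B$-expansions forces $b'$ to be a single generator, and then invoke the valuative property of $\mu$ together with $\mu^{-1}(0)=E$ to conclude $b'=b$. The only cosmetic difference is that you use both containments $b'\in I_l(b)$, $b\in I_l(b')$ and cancel $\mu$-values symmetrically, whereas the paper assumes w.l.o.g.\ $\mu(b)\geq\mu(b')$ and argues from the single containment $b'\in I(b)$.
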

\begin{proof}
Let $b\in B\cap eAf$ and $b'\in B\cap e'Af'$ be valuative basis elements. W.l.o.g assume that $\mu(b)\geq\mu(b')$. Assume that $b'\in I(b)$. By the previous lemma, $B_{I(b)}:=\{abc\mid a\in B\cap Ae, c\in B\cap fA\}$ is a basis for $I(b)$. Hence $b'=\sum\limits_{x\in B_{I(b)}}k_xx$ where almost all $k_x$ are zero. Since $b$ is valuative, $\mu(x)>\mu(b)\geq \mu(b')$ for any $x\in B_{I(b)}\setminus\{b\}$. It follows that $k_x=0$ for any $x\in B_{I(b)}\setminus\{b\}$ and $k_b=1$. Thus $b=b'$. The other assertions of the lemma can be shown similarly.
\end{proof}

\begin{proposition}\label{propvalbas1}
Suppose there exists a valuative basis element $b\in (B\setminus E)\cap eAe$. Then $\dim_K(A)=\infty$, $A$ is not simple (in fact it has infinitely many ideals), neither left nor right Artinian and not von Neumann regular.
\end{proposition}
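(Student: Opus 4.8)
The plan is to exploit the powers $b,b^{2},b^{3},\dots$ of the given element. The first step is to show by induction that each $b^{n}$ (for $n\ge 1$) is again a valuative basis element lying in $eAe$, with $\mu(b^{n})=n\,\mu(b)$. That $b^{n}\in B$ follows from adhesiveness: if $b^{n}\in B\cap eAe$, then $b^{n}\in B\cap eA$, so right adhesiveness of $b$ gives $b\cdot b^{n}\in B$, and since $b^{n+1}\in eAe$ this stays in $B\cap eAe$. To propagate the valuative identity I would first record the two special cases $\mu(bc)=\mu(b)+\mu(c)$ for $c\in B\cap eA$ and $\mu(ab)=\mu(a)+\mu(b)$ for $a\in B\cap Ae$ (obtained from the defining equation of a valuative element by taking $a=e$ or $c=e$), and then compute $\mu(a\,b^{n+1}\,c)=\mu\bigl(a\,b^{n}(bc)\bigr)=\mu(a)+n\mu(b)+\mu(bc)=\mu(a)+(n+1)\mu(b)+\mu(c)$, using the inductive hypothesis for $b^{n}$ together with $bc\in B\cap eA$. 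Taking $a=c=e$ yields $\mu(b^{n})=n\mu(b)$, which is positive since $b\notin E$.

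Once this is in place, the four assertions follow quickly. Because the values $\mu(b^{n})=n\mu(b)$ are pairwise distinct, the elements $b,b^{2},b^{3},\dots$ are pairwise distinct members of the basis $B$, so $B$ is infinite and $\dim_K(A)=\infty$. Each $b^{n}$ is a valuative basis element, so by Lemma \ref{lemvalbas} the ideals $I(b^{n})$ are pairwise distinct; as $b^{n+1}=b^{n}\cdot b\in I(b^{n})$ we obtain a descending chain $I(b)\supseteq I(b^{2})\supseteq I(b^{3})\supseteq\cdots$ whose inclusions are therefore all strict. This exhibits infinitely many two-sided ideals, so $A$ is not simple, and since a strictly descending chain of two-sided ideals is in particular a strictly descending chain of left ideals and of right ideals, $A$ is neither left nor right Artinian.

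For von Neumann regularity I would argue by contradiction: suppose $byb=b$ for some $y\in A$. Replacing $y$ by $eye$ (legitimate since $be=eb=b$) I may assume $y\in eAe$, and then expand $y=\sum_i k_i d_i$ as a finite $K$-linear combination of basis elements $d_i\in B\cap eAe$. Adhesiveness of $b$ shows that each $b d_i b$ again lies in $B$, and the valuative identity gives $\mu(b d_i b)=2\mu(b)+\mu(d_i)\ge 2\mu(b)>\mu(b)$. Hence every basis element occurring in $byb=\sum_i k_i\,(b d_i b)$ has $\mu$-value strictly larger than $\mu(b)$, so $b$ itself never appears in the basis expansion of $byb$; this contradicts $byb=b$.

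The routine parts are the idempotent bookkeeping (for instance $eb=be=b$, the fact that $b^{n}\in eAe$, and that $B\cap eAe$ is a basis of $eAe$, which uses the orthogonality of $E$ and homogeneity of $B$), together with the basis description of principal ideals supplied by Lemma \ref{lemidbas}. The one step that requires genuine care is the inductive verification that the valuative property survives passage to the powers $b^{n}$ and to products $b d_i b$; everything else is an immediate consequence of Lemmas \ref{lemidbas} and \ref{lemvalbas} once the $\mu$-values are controlled. I expect this $\mu$-additivity bookkeeping to be the main, though not deep, obstacle.
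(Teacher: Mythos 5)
Your proposal is correct and follows essentially the same route as the paper's proof: powers $b^n$ are valuative basis elements with $\mu(b^n)=n\mu(b)$, Lemma \ref{lemvalbas} yields the strictly descending chain $I(b)\supsetneq I(b^2)\supsetneq\cdots$ of two-sided ideals, and the identity $\mu(bcb)=2\mu(b)+\mu(c)>\mu(b)$ rules out $byb=b$. The only difference is that you spell out the inductive verification that the valuative property passes to the powers $b^n$, which the paper asserts as clear.
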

\begin{proof}
Clear $\mu(b^n)=n\mu(b)$ since $b$ is valuative. Hence $b^m\neq b^n$ if $m\neq n$ (note that $\mu(b)\geq 1$ since $b\not\in E$). Therefore $\dim_K(A)=\infty$. Lemma \ref{lemvalbas} implies that we have an infinite descending chain $I(b)\supsetneq I(b^2)\supsetneq I(b^3)\dots$ of ideals (note that $b^n$ is valuative for any $n\in\N$ since $b$ valuative). Hence $A$ is neither left nor right Artinian. Now assume that $bab=b$ for some $a\in A$. Clearly we may assume that $a\in eAe$. Write $a=\sum\limits_{c\in B\cap eAe}k_cc$ where almost all $k_c$ are zero. Then $b=bab=\sum\limits_{c\in B\cap eAe}k_cbcb$. But $\mu(bcb)=\mu(b)+\mu(c)+\mu(b)>\mu(b)$ for any $c\in B\cap eAe$ and hence we arrived at a contradiction. Thus $A$ is not von Neumann regular.
\end{proof}

\begin{definition}
We say that two basis elements $b\in B\cap eAf$ and $b'\in B\cap e'Af'$ {\it have no common left multiple} if there are no $a\in B\cap Ae,a'\in B\cap Ae'$ such that $ab=a'b'$. We say that $b$ and $b'$ {\it have no common right multiple} if there are no $c\in B\cap fA,c'\in B\cap f'A$ such that $bc=b'c'$.
\end{definition}

\begin{definition}
An element $b\in B\cap eAf$ is called {\it right cancellative} if $ab=cb~\Rightarrow~a=c$ for any $a,c \in B\cap Ae$ and {\it left cancellative} if $ba'=bc'~\Rightarrow~a'=c'$ for any $a',c' \in B\cap fA$.
\end{definition}

\begin{proposition}\label{propvalbas2}
If there are elements $b,b'\in B\cap eAe$ such that $b$ is adhesive and right cancellative, $b'$ is left adhesive and $b$ and $b'$ have no common left multiple, then $A$ is not left Noetherian. If there are elements $c,c'\in B\cap eAe$ such that $c$ is adhesive and left cancellative, $c'$ is right adhesive and $c$ and $c'$ have no common right multiple, then $A$ is not right Noetherian.
\end{proposition}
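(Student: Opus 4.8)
The plan is to witness the failure of the left Noetherian property by an explicit strictly ascending chain of left ideals, and to obtain the second statement by the symmetric argument. For $n\ge 0$ set
\[
J_n:=I_l\big(\{\,b'b^k\mid 0\le k\le n\,\}\big).
\]
Since the generating set of $J_n$ is contained in that of $J_{n+1}$ we have $J_0\subseteq J_1\subseteq\cdots$, and $b'b^{n+1}\in J_{n+1}$; everything reduces to showing $b'b^{n+1}\notin J_n$, which makes each inclusion strict and hence $A$ not left Noetherian. Note that the generators are $b'$ \emph{post}-multiplied by powers of $b$ (not $b^kb'$, which all lie in $I_l(b')$ and would give a constant chain); getting this order right is the one genuinely delicate point.

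First I would record that each $b'b^k$ is a left adhesive basis element lying in $B\cap eAe$. That $b'b^k\in B$ follows by induction on $k$ from the left adhesiveness of $b$ (so $b'b,\,b'b^2,\dots\in B$), and the left adhesiveness of $b'b^k$ is a routine check: for $a\in B\cap Ae$ one has $ab'\in B\cap Ae$ because $b'$ is left adhesive, and then $ab'b^k=(ab')b^k\in B$ because $b^k$ is left adhesive. Consequently Lemma \ref{lemidbas}(i) applies and yields the explicit $K$-basis
\[
J_n=\Span_K\{\,ab'b^k\mid 0\le k\le n,\ a\in B\cap Ae\,\}.
\]

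The core of the argument is then the contradiction. Suppose $b'b^{n+1}\in J_n$. As $b'b^{n+1}\in B$ and the displayed spanning set consists of basis elements, comparing coefficients forces $b'b^{n+1}=ab'b^k$ for some $0\le k\le n$ and some $a\in B\cap Ae$. Now I would right-cancel $b$ exactly $k$ times: writing both sides as words ending in $b$ and using that $b$ is right cancellative (the intermediate factors $b'b^{p-1}$ and $ab'b^{q-1}$ all lie in $B\cap Ae$), the equality $b'b^{n+1}=ab'b^k$ collapses to $b'b^{\,n+1-k}=ab'$. Since $k\le n$ we have $n+1-k\ge 1$, so the left-hand side is $(b'b^{\,n-k})b$, and therefore
\[
(b'b^{\,n-k})\,b=a\,b'
\]
exhibits a common left multiple of $b$ and $b'$ (both $b'b^{\,n-k}$ and $a$ lie in $B\cap Ae$), contradicting the hypothesis that $b$ and $b'$ have no common left multiple. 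Hence $b'b^{n+1}\notin J_n$, and the chain is strictly ascending.

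For the second statement I would run the mirror image: take $J_n:=I_r(\{\,c^kc'\mid 0\le k\le n\,\})$, use that $c$ is adhesive and $c'$ right adhesive to see each $c^kc'$ is a right adhesive basis element, invoke Lemma \ref{lemidbas}(ii), and reach a common right multiple $c\,(c^{\,m-1}c')=c'\,d$ after left-cancelling $c$ (legitimate since $c$ is left cancellative), contradicting the absence of a common right multiple. The only real obstacle in the whole proof is organising the cancellation bookkeeping and, as noted, selecting the correct one-sided chain; once the chain $\{b'b^k\}$ is chosen, the cancellativity and no-common-multiple hypotheses slot in exactly.
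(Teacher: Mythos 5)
Your proof is correct and takes essentially the same approach as the paper: the same strictly ascending chain of left ideals generated by $b'b^k$ (you merely include the $k=0$ generator $b'$, where the paper starts at $k=1$), the same appeal to Lemma \ref{lemidbas}(i) to get a basis of $J_n$ and force $b'b^{n+1}=ab'b^k$, and the same repeated right-cancellation of $b$ producing a forbidden common left multiple, with the mirror argument for the right-Noetherian claim. The only difference is cosmetic: you spell out the left-adhesiveness checks and the second assertion, which the paper leaves to the reader.
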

\begin{proof}
For any $n\in \N$ set $I_n:=I_l(b'b,b'b^2, \dots, b'b^n)$. Then clearly $I_n\subseteq I_{n+1}$ for any $n\in \N$. Now assume that $b'b^{n+1}\in I_n$ for some $n\in \N$. By Lemma \ref{lemidbas} the set $\{ab'b^i\mid 1\leq i\leq n, a\in B\cap Ae \}$ is a basis for $I_n$ (note that $b'b^i$ is adhesive for any $1\leq i \leq n$ since $b$ is adhesive and $b'$ is left adhesive). It follows that $b'b^{n+1}=ab'b^i$ for some $1\leq i\leq n$ and $a\in B\cap Ae$. This implies $b'b^{n+1-i}=ab'$ since $b$ is right cancellative. But this contradicts the assumption that $b$ and $b'$ have no common left multiple. Hence $I_n\subsetneq I_{n+1}$ for any $n\in \N$ and therefore $A$ is not left Noetherian. Similarly one can prove the second assertion of the proposition.
\end{proof}

\subsection{Applications to Leavitt path algebras of hypergraphs}

\begin{definition}
A hypergraph $H$ satisfies {\it Condition (A)} if there is an $h\in H^1$ such that $|s(h)|, |r(h)|\geq 2$.
\end{definition}
\begin{remark}$~$\\
\vspace{-0.6cm}
\begin{enumerate}[(a)]
\item If $H$ is a hypergraph that does not satisfy Condition (A), then $L_K(H)$ is isomorphic to a Leavitt path algebra of a finitely separated graph, cf. Example \ref{ex3}. 
\item If $H$ is a hypergraph, then 
\[\text{Condition (A')}~\Longrightarrow~\text{Condition (A)}~\overset{H^1\neq \emptyset}{\Longleftarrow}~\text{Condition (LV)}~\Longrightarrow~\text{Condition (B)}.\]
\end{enumerate}
\end{remark}

The theorem below shows that a Leavitt path algebra $L_K(H)$ that is finite-dimensional as a $K$-vector space or simple or left Artinian or right Artininan or von Neumann regular is isomorphic to a Leavitt path algebra of a finitely separated graph.

\begin{theorem}
Let $H$ be a hypergraph that satisfies Condition (A). Then $\dim_K(L_K(H))=\infty$, $L_K(H)$ is not simple (in fact it has infinitely many ideals), neither left nor right Artinian and not von Neumann regular.
\end{theorem}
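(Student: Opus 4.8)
The plan is to apply Proposition \ref{propvalbas1} with $A = L_K(H)$, so that all four assertions follow simultaneously once we produce a single valuative basis element lying in $(B\setminus E)\cap eAe$ for some idempotent $e$. First I would fix the data required by the framework of the preceding subsection: take $E = H^0$, let $B$ be the set of (images in $L_K(H)$ of the) nod-paths, which by Corollary \ref{corbasis} is a $K$-basis of $L_K(H)$ consisting of homogeneous elements and containing $E$, and let $\mu\colon B\to\N_0$ be the length function $p\mapsto |p|$. Since a nod-path has length $0$ precisely when it is a single vertex, we have $\mu(b)=0\Leftrightarrow b\in E$, as required.

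Using Condition (A), choose $h\in H^1$ with $|s(h)|,|r(h)|\geq 2$; in particular the generators $h_{22}$ and $h_{22}^*$ exist. Set $e:=s(h)_2$ and $b:=h_{22}h_{22}^*$. Then $b$ is a nod-path, because its only length-$2$ subword is $h_{22}h_{22}^*$, which is not forbidden (a forbidden word of the first kind has the shape $h_{i1}h_{j1}^*$ with both second indices equal to $1$). Since $h_{22}$ starts at $s(h)_2=e$ and $h_{22}^*$ ends at $s(h)_2=e$, we get $b\in(B\setminus E)\cap eAe$ with $\mu(b)=2\geq 1$.

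The crux is to verify that $b$ is adhesive, and the choice of $b$ is engineered precisely for this. For left adhesiveness, let $a\in B\cap Ae$, i.e. a nod-path ending at $e$. The concatenation $ab$ is a valid $d$-path because $r(a)=e=s(h_{22})$, and the only length-$2$ subword of $ab$ not already occurring in $a$ or in $b$ is the junction (last letter of $a$) followed by $h_{22}$. As $h_{22}$ is unstarred with first index $2\neq 1$, it can be the second letter of neither a forbidden word of the first kind (whose second letter is starred) nor of the second kind (whose second letter has first index $1$); hence the junction is not forbidden, $ab$ is a nod-path, and so $ab\in B$. The right-adhesiveness argument is symmetric: the last letter $h_{22}^*$ of $b$ is starred with first index $2\neq 1$, so it can be the first letter of no forbidden word, giving $bc\in B$ for every $c\in B\cap eA$.

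Valuativity is then immediate: for $a\in B\cap Ae$ and $c\in B\cap eA$ the products $ab$, $bc$ and $abc$ are genuine concatenations of nod-paths with no reduction occurring (by the junction analysis together with Theorem \ref{thmbasis}, the concatenations are already in normal form), so $\mu(abc)=|a|+|b|+|c|=\mu(a)+\mu(b)+\mu(c)$. Thus $b$ is a valuative basis element in $(B\setminus E)\cap eAe$, and Proposition \ref{propvalbas1} delivers that $\dim_K(L_K(H))=\infty$, that $L_K(H)$ is not simple and has infinitely many ideals, that it is neither left nor right Artinian, and that it is not von Neumann regular. I expect the only point genuinely requiring care to be the junction analysis establishing adhesiveness; the remaining steps are bookkeeping within the established framework.
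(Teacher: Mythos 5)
Your proposal is correct and follows essentially the same route as the paper: the same basis $B$ of nod-paths with $\mu(p)=|p|$, the same element $b=h_{22}h_{22}^*\in s(h)_2L_K(H)s(h)_2$, and the same application of Proposition \ref{propvalbas1}. Your junction analysis simply makes explicit the paper's parenthetical remark that $h_{22}$ and $h_{22}^*$ never occur in a forbidden word, so $b$ is adhesive and valuative.
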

\begin{proof}
Let $B$ be the basis for $L_K(H)$ consisting of all nod-paths. Clearly $B$ consists of elements that are homogeneous with respect to $(L_K(H), H^0)$ and contains $H^0$. Define a map $\mu:B\rightarrow \N_0$ by $\mu(p)=|p|$ for any nod-path $p$. Then clearly $\mu(b)=0$ iff $b\in H^0$. Since $H$ satisfies Condition (A), we can choose an $h\in H^1$ such that $|s(h)|, |r(h)|\geq 2$. Set $b:=h_{22}h_{22}^*$. One checks easily that $b\in(B\setminus H^0)\cap s(h)_2L_K(H)s(h)_2$ is a valuative basis elements (note that $h_{22}$ and $h_{22}^*$ do not appear in a forbidden word, see Definition \ref{defnod}). The assertion of the theorem follows now from Proposition \ref{propvalbas1}.
\end{proof}

Example \ref{ex111} shows that there is a hypergraph $H$ which satisfies Condition (A) (even Condition (LV)) such that $L_K(H)$ is Noetherian. But the next theorem shows that if a hypergraph $H$ satisfies Condition (A'), then $L_K(H)$ cannot be left or right Noetherian.

\begin{theorem}
Let $H$ be a hypergraph that satisfies Condition (A'). Then $L_K(H)$ is neither left nor right Noetherian.
\end{theorem}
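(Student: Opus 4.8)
The plan is to apply Proposition \ref{propvalbas2} to the basis $B$ of $L_K(H)$ consisting of all nod-paths (Corollary \ref{corbasis}), with the length function $\mu(p)=|p|$, so that $\mu(b)=0$ exactly when $b\in H^0$. Since the involution $*$ of Proposition \ref{propprop}(ii) is a $K$-linear anti-automorphism, it yields an isomorphism $L_K(H)\cong L_K(H)^{\mathrm{op}}$; hence $L_K(H)$ is left Noetherian if and only if it is right Noetherian, and it suffices to produce, for some $v\in H^0$, elements $b,b'\in B\cap vL_K(H)v$ such that $b$ is adhesive and right cancellative, $b'$ is left adhesive, and $b,b'$ have no common left multiple. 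Proposition \ref{propvalbas2} then gives that $L_K(H)$ is not left Noetherian, and the involution upgrades this to the two-sided statement.

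The combinatorial heart is an observation about the forbidden words $h_{i1}h_{j1}^*$ and $h_{1i}^*h_{1j}$ of Definition \ref{defnod}: an edge $g_{ij}$ occurs as the \emph{first} letter of a forbidden word only if $j=1$ and as the \emph{second} letter only if $i=1$, while a starred edge $g_{ij}^*$ occurs as a first letter only if $i=1$ and as a second letter only if $j=1$. Since forbidden words have length two, no forbidden subword can be created at the junction when a nod-path is appended to or prepended to a nod-path $b$ whose last letter is never the first letter of a forbidden word and whose first letter is never the second letter of a forbidden word; such a $b$ is therefore adhesive. In particular, any letter carrying the index $2$ in the relevant slot (such as $h_{j2},h_{j2}^*,h_{2j},h_{2j}^*$ with that index $\geq 2$) is harmless. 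Right and left cancellativity are then immediate, because equality of nod-paths is equality of words, so a common suffix (respectively prefix) may be cancelled.

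It remains to exhibit $b$ and $b'$ in each of the three cases of Condition (A'); in every case Condition (A') supplies two distinct letters returning to a common vertex $v$, which is exactly what forces the absence of a common left multiple, as $b$ and $b'$ will end in different letters. If $s(h)$ and $r(h)$ are sets with (after reordering) $s(h)_2=r(h)_2=v$, I take $b=h_{22}$ and $b'=h_{22}h_{22}^*$, both loops at $v$. If $s(h)$ is a proper multiset, I pick $i<j$ with $s(h)_i=s(h)_j=v$ and set $b=h_{j2}h_{j2}^*$ and $b'=h_{j2}h_{i2}^*$, again loops at $v$. If $r(h)$ is a proper multiset, I pick $i<j$ with $r(h)_i=r(h)_j=v$ and set $b=h_{2j}^*h_{2j}$ and $b'=h_{2j}^*h_{2i}$. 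In each case the indices appearing are all $2$, so none of these short words is forbidden; using the observation above, $b$ is adhesive and right cancellative and $b'$ is left adhesive; and the final letters differ ($h_{22}$ versus $h_{22}^*$, or $h_{j2}^*$ versus $h_{i2}^*$ with $i\neq j$, or $h_{2j}$ versus $h_{2i}$ with $i\neq j$), so no word ending in $b$ can equal a word ending in $b'$.

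The step I expect to be the main obstacle is verifying adhesiveness cleanly, that is, confirming that appending or prepending an arbitrary nod-path never manufactures a forbidden subword at the new junction, while simultaneously keeping $b$ and $b'$ as loops at one and the same vertex $v$ and forcing their terminal letters to differ. This is precisely where Condition (A') is indispensable: it provides either a genuine loop at $v$ together with its starred partner, or two distinct indices of $s(h)$ (respectively $r(h)$) pointing at $v$, delivering the two required letters. Without it, as Example \ref{ex111} shows, $L_K(H)$ may well be Noetherian.
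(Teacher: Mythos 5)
Your proof is correct and takes essentially the same route as the paper: the same nod-path basis with $\mu=$ length, the same case analysis on Condition (A'), and in the proper-multiset case the identical witnesses $b=h_{j2}h_{j2}^*$, $b'=h_{j2}h_{i2}^*$ fed into Proposition \ref{propvalbas2}. The only deviations are cosmetic and sound: the paper exhibits an explicit right-sided witness $b''$ (e.g.\ $h_{i2}h_{j2}^*$) and invokes the second half of Proposition \ref{propvalbas2}, where you instead transfer the left-sided conclusion via the involution $L_K(H)\cong L_K(H)^{\mathrm{op}}$, and in the intersection case the paper takes $b'=h_{22}^*$ rather than your $b'=h_{22}h_{22}^*$.
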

\begin{proof}
Define $B$ and $\mu$ as in the proof of the previous theorem. Since $H$ satisfies Condition (A'), we can choose an $h\in H^1$ such that $|s(h)|, |r(h)|\geq 2$ and $s(h)$ is a proper multiset or $r(h)$ is a proper multiset or $s(h)$ and $r(h)$ are sets with nontrivial intersection.\\
First suppose that $s(h)$ is a proper multiset. Choose $1\leq i<j\leq |s(h)|$ such that $s(h)_i=s(h)_j$. Set $b:=h_{j2}h_{j2}^*$, $b':=h_{j2}h_{i2}^*$ and $b'':=h_{i2}h_{j2}^*$. Then clearly $b,b',b''\in(B\setminus H^0)\cap vL_K(H)v$ where $v=s(h)_i=s(h)_j$. One checks easily that $b$ is adhesive and both left and right cancellative, $b'$ is left adhesive, $b''$ is right adhesive, $b$ and $b'$ have no common left multiple and $b$ and $b''$ have no common right multiple (note that $h_{j2}$ and $h_{j2}^*$ do not appear in a forbidden word, see Definition \ref{defnod}). It follows from Proposition \ref{propvalbas2} that $L_K(H)$ is neither left nor right Noetherian. The case that $r(h)$ is a proper multiset can be handled similarly. \\
Now suppose that $s(h)$ and $r(h)$ are sets with nontrivial intersection. By Remark \ref{remhlpa} (a) we may assume that $s(h)_2=r(h)_2$. Set $b:=h_{22}$ and $b':=h_{22}^*$. Then clearly $b,b'\in(B\setminus H^0)\cap vL_K(H)v$ where $v=s(h)_2=r(h)_2$. One checks easily that $b$ and $b'$ are adhesive and both left and right cancellative and that $b$ and $b'$ have neither a common left multiple nor a common right multiple (note that $h_{22}$ and $h_{22}^*$ do not appear in a forbidden word, see Definition \ref{defnod}). It follows from Proposition \ref{propvalbas2} that $L_K(H)$ is neither left nor right Noetherian.
\end{proof}

\section{The $V$-monoid}
Let $R$ be a ring. Recall that a left $R$-module $M$ is called {\it unital} if $RM=M$. We denote by $R$-$\Mod$ the category of unital left $R$-modules. Furthermore we denote by $R$-$\Mod_{\proj}$ the full subcategory of $R$-$\Mod$ whose objects are the projective objects of $R$-$\Mod$ that are finitely generated as a left $R$-module. When $R$ has local units, then
\[V(R)=\{[P]\mid P\in R\text{-}\Mod_{\proj}\},\]
see e.g. \cite[Subsection 4A]{ara-hazrat-li-sims}. $V(R)$ becomes an abelian monoid by defining $[P]+[Q]=[P\oplus Q]$. 

Recall from Section 4 that $\Hy$ denotes the category of hypergraphs, $\A$ the category of $K$-algebras with local units and that $L_K:\Hy \rightarrow \A$ is a functor that commutes with direct limits. We denote the category of abelian monoids by $\M$. One checks easily that $V$ defines a continuous functor $\A\rightarrow \M$ in a canonical way. In Subsection 10.1 we define a functor $M:\Hy\rightarrow \M$. In Subsection 10.2 we recall some universal ring constructions by G. Bergman which will be used in the proof of the main result of this section, namely Theorem \ref{thmm}. In Subsection 10.3 we prove Theorem \ref{thmm} which states that $V\circ L_K\cong M$ and that $L_K(H)$ is left and right hereditary provided $H$ is finite. 

\subsection{The functor $M:\Hy\rightarrow \M$}

\begin{definition}\label{defM}
For any hypergraph $H$ let $M(H)$ be the abelian monoid presented by the generating set $H^0$ and the relations $\sum s(h)=\sum r(h)~(h\in H^1)$ where $\sum s(h)=\sum\limits_{i=1}^{|s(h)|}s(h)_i$ and $\sum r(h)=\sum\limits_{j=1}^{|r(h)|}r(h)_j$. If $\phi:H\rightarrow I$ is a morphism in $\Hy$, then there is a unique monoid homomorphism $M(\phi):M(H)\rightarrow M(I)$ such that $M(\phi)(v)=\phi^0(v)$ for any $v\in H^0$. One checks easily that $M:\Hy\rightarrow \M$ is a functor that commutes with direct limits.
\end{definition}

\subsection{Some universal ring constructions by G. Bergman}
In this subsection all rings are assumed to be unital. Let $k$ be a commutative ring and $R$ a $k$-algebra (i.e. $R$ is a ring given with a homomorphism of $k$ into its center). A {\it $R$-ring$_k$} is a $k$-algebra $S$ given with a $k$-algebra homomorphism $R\rightarrow S$. By an $R$-module we mean a right $R$-module. In \cite{bergman74}, G. Bergman described the following two key constructions: \\

\begin{itemize}
\item ADJOINING MAPS Let $M$ be any $R$-module and $P$ a finitely generated projective $R$-module. Then there exists $R$-ring$_k$ $S$, having a universal module homomorphism $f: M\otimes S \rightarrow P\otimes S$, see \cite[Theorem 3.1]{bergman74}. $S$ can be obtained by adjoining to $R$ a family of generators subject to certain relations, see \cite[Proof of Theorem 3.1]{bergman74}.\\
\item IMPOSING RELATIONS Let $M$ be any $R$-module, $P$ a projective $R$-module and $f: M\rightarrow P$ any module homomorphism. Then there exists an $R$-ring$_k$ $S$ such that $f\otimes S = 0$ and universal for that property. $S$ can be chosen to be a quotient of $R$, see \cite[Proof of Theorem 3.2]{bergman74}.
\end{itemize}

Using the key constructions above Bergman described more complicated constructions. One of them is used in this paper:\\

\begin{itemize}
\item ADJOINING ISOMORPHISMS Given two finitely generated projective $R$-modules $P$
and $Q$, one can adjoin a universal isomorphism between $P\otimes $ and $Q\otimes$ by first
freely adjoining maps $i:P\otimes \rightarrow Q \otimes$ and $i^{-1}:Q\otimes \rightarrow P \otimes$ (via ADJOINING MAPS) and then setting $1_{Q\otimes}-ii^{-1}$ and $1_{P\otimes}-i^{-1}i$ equal to $0$ (via IMPOSING RELATIONS), see \cite[p. 38]{bergman74}. Bergman denoted the resulting $R$-ring$_k$ by $R\langle i, i^{-1} : \overline{P}\cong\overline{Q}\rangle$.
\end{itemize}

Set $S:=R\langle i, i^{-1} : \overline{P}\cong\overline{Q}\rangle$. Bergman proved the following (for these results he required that $k$ is a field and that $P$ and $Q$ are nonzero): The abelian monoid $V(S)$ may be obtained from $V(R)$ by imposing one relation $[P] = [Q]$. Further the right global dimension of $S$ equals the right global dimension of $R$, unless the right global dimension of $R$ is $0$, in which case the right global dimension of $S$ is $\leq 1$. See \cite[Theorem 5.2 and last paragraph of p. 48]{bergman74}.

It is easily seen that Bergman's results mentioned above also apply to left $R$-modules.

\subsection{The monoid $V(L_K(H))$}
\begin{theorem}\label{thmm}
$V\circ L_K\cong M$. Moreover, if $H$ is a finite hypergraph, then $L_K(H)$ is left and right hereditary.
\end{theorem}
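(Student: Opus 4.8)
The plan is to build a natural transformation $\eta\colon M\Rightarrow V\circ L_K$ and to show that each component $\eta_H$ is an isomorphism of monoids, extracting the hereditarity statement along the way from Bergman's global-dimension estimate. For the component maps, observe that in $V(L_K(H))$ relations (iii) and (iv) of Definition \ref{defhlpa} assert precisely that the matrices $X_h=(h_{ij})$ and $Y_h=(h^*_{ij})$ act, by right multiplication, as mutually inverse left-module isomorphisms between $\bigoplus_{i=1}^{|s(h)|}L_K(H)s(h)_i$ and $\bigoplus_{j=1}^{|r(h)|}L_K(H)r(h)_j$, since $X_hY_h=\operatorname{diag}(s(h)_i)$ and $Y_hX_h=\operatorname{diag}(r(h)_j)$. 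Hence $\sum s(h)=\sum r(h)$ holds in $V(L_K(H))$, so the assignment $v\mapsto[L_K(H)v]$ respects the defining relations of $M(H)$ and yields a well-defined homomorphism $\eta_H\colon M(H)\to V(L_K(H))$; naturality in $H$ is a routine check using $V(L_K(\phi))([L_K(H)v])=[L_K(I)\phi^0(v)]$.

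Since $M$ and $V\circ L_K$ both commute with direct limits and every hypergraph is a direct limit of its finite subhypergraphs (Proposition \ref{proplim}), it suffices to prove $\eta_H$ is an isomorphism for finite $H$. For such $H$ the algebra $L_K(H)$ is unital (Proposition \ref{propprop}(i)), and I would identify it with an iterated Bergman construction. Start from $R_0:=K^{H^0}=\prod_{v\in H^0}Kv$, a finite product of copies of $K$, whose $V$-monoid is the free abelian monoid on $H^0$. Enumerate $H^1=\{h^{(1)},\dots,h^{(t)}\}$ and, for $1\le m\le t$, set $R_m:=R_{m-1}\langle i,i^{-1}:\overline{P}\cong\overline{Q}\rangle$ where $P=\bigoplus_i R_{m-1}s(h^{(m)})_i$ and $Q=\bigoplus_j R_{m-1}r(h^{(m)})_j$ are the (nonzero) finitely generated projective modules determined by the source and range multisets. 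The key claim is $R_t\cong L_K(H)$: an $R_0$-ring equipped with such a family of isomorphisms is exactly the same datum as an $H$-family in the sense of Remark \ref{remhlpa}(c), because relation (ii) says the adjoined maps are module homomorphisms and relations (iii), (iv) say they are mutually inverse. Comparing the universal property of Bergman's construction with the Universal Property of $L_K(H)$ then yields the isomorphism.

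Granting this identification, Bergman's computation of the $V$-monoid applies at each step: $V(R_m)$ is obtained from $V(R_{m-1})$ by imposing the single relation $[P]=[Q]$, which unwinds to $\sum s(h^{(m)})=\sum r(h^{(m)})$. Starting from the free monoid $V(R_0)$ on $H^0$ and imposing these relations for all $m$ produces exactly the presentation of $M(H)$ given in Definition \ref{defM}; tracking the generators shows this monoid isomorphism is $\eta_H$, which is therefore bijective. The hereditarity statement falls out of the same induction: $R_0$ is semisimple, so its right global dimension is $0$, and Bergman's estimate gives right global dimension $\le 1$ for $R_m$ once at least one map has been adjoined and preserves the bound $\le 1$ thereafter; hence the right global dimension of $L_K(H)=R_t$ is $\le 1$. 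Since the quoted results apply equally to left modules (or via the involution of Proposition \ref{propprop}(ii)), $L_K(H)$ is both left and right hereditary.

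The main obstacle I anticipate is the precise identification $R_t\cong L_K(H)$ for finite $H$: one must rephrase an $H$-family entirely in terms of mutually inverse maps between the projective modules $P$ and $Q$ over the successive rings, verify that Bergman's hypotheses (a field base and nonzero finitely generated projective $P,Q$) hold at every stage—this uses that each $s(h)_i$ and $r(h)_j$ remains a nonzero idempotent, guaranteed by the basis Theorem \ref{thmbasis}—and then match the two universal properties exactly. Once this is in place, the passage to direct limits and the global-dimension bookkeeping are purely formal.
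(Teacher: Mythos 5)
Your proposal is correct and takes essentially the same route as the paper: the identical natural transformation $v\mapsto[L_K(H)v]$ verified via the matrices $X_h$, $Y_h$, the same reduction to finite hypergraphs using direct limits (Proposition \ref{proplim}), the same iterated Bergman construction $R_m=R_{m-1}\langle i,i^{-1}:\overline{P}\cong\overline{Q}\rangle$ starting from $K^{H^0}$ with Bergman's Theorems 5.1--5.2 supplying both the $V$-monoid computation and the global-dimension bound, and the same involution argument (Proposition \ref{propprop}(ii)) to pass from left to right hereditarity. The step you flag as delicate---identifying $R_t\cong L_K(H)$ by matching universal properties---is exactly the point the paper also handles, by inspecting the proofs of Bergman's Theorems 3.1 and 3.2.
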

\begin{proof}
We have divided the proof into two parts, Part I and Part II. In Part I we define a natural transformation $\theta:M\rightarrow V\circ L_K$. In Part II we show that $\theta$ is a natural isomorphism and further that $L_K(H)$ is left and right hereditary provided that $H$ is finite.\\
\\  
{\bf Part I}
Let $H$ be a hypergraph and $F(H)$ the free abelian monoid generated by $H^0$. There is a unique monoid homomorphism $\eta_H:F(H)\rightarrow V(L_K(H))$ such that $\eta_H(v)=[L_K(H)v]$ for any $v\in H^0$. In order to show that $\eta_H$ induces a monoid homomorphism $M(H)\rightarrow V(L_K(H))$ we have to check that $\eta_H(\sum s(h))=\eta_H(\sum r(h))$ for any $h\in H^1$, i.e. that 
\begin{equation}
[\bigoplus\limits_{i=1}^{|s(h)|}L_K(H)s(h)_i]=[\bigoplus\limits_{j=1}^{|r(h)|}L_K(H)r(h)_j]\text{ for any }h\in H^1.
\end{equation}
Given $h\in H^1$ let $X_h$ be the $|s(h)|\times |r(h)|$-matrix whose entry at position $(i,j)$ is $h_{ij}$ and $Y_h$ the $|r(h)|\times |s(h)|$-matrix whose entry at position $(i,j)$ is $h_{ji}^*$. It follows from  relations (iii) and (iv) in Definition \ref{defhlpa} that
\begin{align*}
X_hY_h=\begin{pmatrix}s(h)_1&&\\&\ddots&\\&&s(h)_{|s(h)|}\end{pmatrix}\text{ and }Y_hX_h\begin{pmatrix}r(h)_1&&\\&\ddots&\\&&r(h)_{|r(h)|}\end{pmatrix}.
\end{align*}
Hence $X_h$ defines an isomorphism $\bigoplus\limits_{i=1}^{|s(h)|}L_K(H)s(h)_i\rightarrow \bigoplus\limits_{j=1}^{|r(h)|}L_K(H)r(h)_j$ by right multiplication (its inverse is defined by $Y_h$). Thus (8) holds and therefore $\eta_H$ induces a monoid homomorphism $\theta_H:M(H)\rightarrow V(L_K(H))$. It is an easy exercise to show that $\theta:M\rightarrow V\circ L_K$ is a natural transformation.\\
\\
{\bf Part II} We want to show that the natural transformation $\theta:M\rightarrow V\circ L_K$ defined in Part I is a natural isomorphism, i.e. that $\theta_{H}:M(H)\rightarrow V(L_K(H))$ is an isomorphism for any hypergraph $H$. By Proposition \ref{proplim} any hypergraph is a direct limit of a direct system of finite hypergraphs. Hence it is sufficient to show that $\theta_{H}$ is an isomorphism for any finite hypergraph $H$ (note that $M$, $V$ and $L_K$ commute with direct limits).\\
Let $H$ be a finite hypergraph. Set $A_0:=K^{H^0}$. We denote by $\alpha_v$ the element of $A_0$ whose $v$-component is $1$ and whose other components are $0$. Write $H^1=\{h^1,\dots,h^n\}$. We inductively define $K$-algebras $A_1,\dots,A_n$ as follows. Let $1\leq t \leq n$ and assume that $A_{t-1}$ has already been defined. Set $A_t:=A_{t-1}\langle i,i^{-1}:\overline{P_t}\cong\overline{Q_t}\rangle$ (see \cite[p. 38]{bergman74}) where
\[P_t=\bigoplus\limits_{j=1}^{|s(h^t)|}A_{t-1}\alpha_{s(h^t)_j}\text{  and  }Q_t=\bigoplus\limits_{k=1}^{|r(h^t)|}A_{t-1}\alpha_{r(h^t)_k}.\]
Investigating the proofs of Theorems 3.1 and 3.2 in \cite{bergman74} we see that $L_K(H)\cong A_n$. By \cite[Theorems 5.1, 5.2]{bergman74}, the abelian monoid $V(A_n)$ is isomorphic to $M(H)$. The monoid isomorphism $M(H)\rightarrow V(L_K(H))$ one gets in this way is precisely $\theta_{H}$. \\
Furthermore, the left global dimension of $A_n\cong L_K(E,w)$ is $\leq 1$ by \cite[Theorems 5.1, 5.2]{bergman74}, i.e. $L_K(H)$ is left hereditary. Since $L_K(H)$ is a ring with involution by Proposition \ref{propprop} (ii), we have $L_K(H)\cong L_K(H)^{op}$. Thus $L_K(H)$ is also right hereditary.
\end{proof}

Recall that a monoid $N$ is called {\it conical} if $m+n=0~\Rightarrow~m=n=0$ for any $m,n\in N$. It is easy to see that $V(L_K(H))\cong M(H)$ is conical for any hypergraph $H$. The proposition below shows that  conversely one can find for any conical abelian monoid $N$ a hypergraph $H$ such that  $N\cong V(L_K(H))$ (follows also from Example \ref{ex3} and \cite[Proposition 4.4]{aragoodearl}).
\begin{proposition}
For any conical abelian monoid $N$ there is a hypergraph $H$ such that $N\cong V(L_K(H))$.
\end{proposition}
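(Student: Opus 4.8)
The plan is to reduce the statement to a purely monoid-theoretic construction, using the identification $V(L_K(H))\cong M(H)$ from Theorem \ref{thmm}. Since $M(H)$ is, by Definition \ref{defM}, the abelian monoid presented by generators $H^0$ and relations $\sum s(h)=\sum r(h)$ $(h\in H^1)$, it suffices to produce a hypergraph $H$ with $M(H)\cong N$; the result then follows from $V(L_K(H))\cong M(H)\cong N$. So first I would fix an arbitrary presentation $N\cong\langle G\mid R\rangle$ of $N$ as an abelian monoid, where $R$ is a set of relations of the form $u=v$ with $u,v$ elements of the free abelian monoid on $G$ (such a presentation always exists).

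The naive idea is to take $H^0=G$ and, for each relation $u=v$ in $R$, to introduce a hyperedge $h$ whose source $s(h)$ is the multiset recording the generators occurring in $u$ with their multiplicities and whose range $r(h)$ is the multiset recording $v$; then the defining relation $\sum s(h)=\sum r(h)$ of $M(H)$ is precisely $u=v$, and the multisets have finite support automatically. The one obstruction, and the main point of the proof, is that $\Mult(H^0)$ consists of multisets with nonempty support, so this works only if no relation in the presentation has an empty side (i.e.\ no relation of the form $u=0$ with $u\ne 0$). This is exactly where conicality of $N$ is needed.

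To remove empty sides I would use conicality as follows. Let $Z\subseteq G$ be the set of generators whose image in $N$ is $0$, put $G'=G\setminus Z$, and replace each relation in $R$ by the relation obtained by deleting every occurrence of a generator in $Z$ (that is, substituting $0$ for those generators), discarding any relation that becomes $0=0$. Writing $\bar N:=\langle G'\mid R'\rangle$ for the resulting presentation, the two obvious homomorphisms (the inclusion $G'\hookrightarrow G$ and the map $G\to\bar N$ sending $Z$ to $0$) are mutually inverse, so $\bar N\cong N$. I then claim that every surviving relation $u'=v'$ has both sides nonempty: if, say, $v'=0$ while $u'\ne 0$, then $u'=0$ in $N$, and since $N$ is conical a nonempty sum of generators can be $0$ only if each summand is $0$, forcing all generators of $u'$ into $Z$ and contradicting $u'\ne 0$.

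Finally I would define the hypergraph $H$ by $H^0=G'$, with one hyperedge $h$ for each surviving relation $u'=v'$, setting $s(h)$ and $r(h)$ to be the (now nonempty, finite-support) multisets recording $u'$ and $v'$. By construction $M(H)$ has exactly the presentation $\langle G'\mid R'\rangle\cong N$, whence $V(L_K(H))\cong M(H)\cong N$ by Theorem \ref{thmm}. The only genuinely nontrivial step is the elimination of empty sides, and conicality is precisely the hypothesis that makes it go through; everything else is bookkeeping. (Alternatively one could invoke Example \ref{ex3} together with the known realization of conical monoids by separated graphs, but the direct argument via $M(H)$ is self-contained.)
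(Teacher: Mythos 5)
Your proof is correct and takes essentially the same route as the paper: both build a hypergraph $H$ whose vertices are the generators and whose hyperedges encode the relations of a presentation of $N$ in which no relation has an empty side, and both then conclude via $V(L_K(H))\cong M(H)\cong N$ from Theorem \ref{thmm}. The only difference is that where the paper simply cites the proof of \cite[Proposition 4.4]{aragoodearl} for the existence of such a presentation, you derive it directly from conicality by deleting the generators that are zero in $N$ and pruning the relations accordingly --- a correct, self-contained replacement for that citation (and note that, exactly like the paper's own proof, your construction tacitly assumes $N\neq\{0\}$, since for the trivial monoid one would get $G'=\emptyset$ and hence an empty hypergraph, which the paper's conventions exclude).
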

\begin{proof}
Choose a presentation of $N$ with a nonempty set $\{x_j \mid j\in J\}$ of generators
and a nonempty set $\{r_i \mid i\in I\}$ of relations
\[r_i:~ \sum\limits_{j\in J} a_{ij}x_j=\sum\limits_{j\in J}b_{ij}x_j\]
where for each $i$ almost all but not all $a_{ij}$ are zero and similarly almost all but not all $b_{ij}$ are zero. The relations can be chosen with these restrictions because $N$ is conical, see \cite[Proof of Proposition 4.4]{aragoodearl}. Define a hypergraph $H=(H^0, H^1, s, r)$ by $H^0=\{v_j \mid j\in J\}$, $H^1:=\{h_i\mid i\in I\}$, $s(h_i)(v_j)=a_{ij}$ and $r(h_i)(v_j)=b_{ij}$ (here we consider $s(h)_i$ and $r(h_i)$ as maps $H^0\rightarrow \N_0$ associating to each $v_j\in H^0$ its multiplicity in $s(h)$ resp. $r(h)$). It follows from Theorem \ref{thmm} that $V(L_K(H))\cong M(H)\cong N$.
\end{proof}

\section{The graded $V$-monoid}
Throughout this section $\Gamma$ denotes a group with identity $\epsilon$. Let $R$ be a $\Gamma$-graded ring. Recall that a left $R$-module $M$ is called {\it $\Gamma$-graded} if there is a decomposition $M=\bigoplus\limits_{\gamma\in\Gamma}M_\gamma$ such that $R_\alpha M_\gamma\subseteq M_{\alpha\gamma}$ for any $\alpha, \gamma\in \Gamma$. We denote by $R$-$\Gr$ the category of $\Gamma$-graded unital left $R$-modules with morphisms the $R$-module homomorphisms that preserve grading. Furthermore we denote by $R$-$\Gr_{\proj}$ the full subcategory of $R$-$\Gr$ whose objects are the projective objects of $R$-$\Gr$ that are finitely generated as a left $R$-module. If $R$ has graded local units, then
\[V^{\gr}(R)=\{[P] \mid P \in R\text{-}\Gr_{\proj}\},\]
cf. \cite[Subsections 2A,4A]{ara-hazrat-li-sims}. $V^{\gr}(R)$ becomes an abelian monoid by defining $[P]+[Q]=[P\oplus Q]$. 

\subsection{Smash products}

\begin{definition}
Let $R$ be a $\Gamma$-graded ring. The {\it smash product} ring $R \# \Gamma$ is defined as the set of all formal sums $\sum\limits_{\gamma\in\Gamma}r^{(\gamma)}p_{\gamma}$ where $r^{(\gamma)}\in R$ for any $\gamma\in\Gamma$ and the $p_{\gamma}$'s are symbols. Addition is defined component-wise and multiplication is defined by linear extension of the rule $(rp_{\alpha})(sp_{\beta})=rs_{\alpha\beta^{-1}}p_{\beta}$ where $r,s\in R$ and $\alpha,\beta\in \Gamma$.
\end{definition}

Recall that if $R$ is a ring, then $R$-$\Mod$ denotes the category of unital left $R$-modules and $R$-$\Mod_{\proj}$ the full subcategory of $R$-$\Mod$ whose objects are the projective objects of $R$-$\Mod$ that are finitely generated as a left $R$-module. When $R$ has local units, then
$V(R)=\{[P]\mid P\in R\text{-}\Mod_{\proj}\}$.

\begin{proposition}\label{propsmash}
Let $R$ be a $\Gamma$-graded ring with graded local units. Then the categories $R$-$\Gr_{\proj}$ and $R\#\Gamma$-$\Mod_{\proj}$ are isomorphic. It follows that $V^{\gr}(R)\cong V(R\#\Gamma)$.
\end{proposition}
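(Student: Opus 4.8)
The plan is to establish the isomorphism of categories $R\text{-}\Gr_{\proj}\cong R\#\Gamma\text{-}\Mod_{\proj}$ directly, and then read off the monoid isomorphism $V^{\gr}(R)\cong V(R\#\Gamma)$ as an immediate corollary. The heart of the matter is the well-known correspondence between graded modules over a $\Gamma$-graded ring $R$ and ordinary modules over the smash product $R\#\Gamma$. First I would recall the classical functor sending a graded left $R$-module $M=\bigoplus_{\gamma}M_\gamma$ to the $R\#\Gamma$-module whose underlying abelian group is $M$ itself, with the action of $R\#\Gamma$ given by $(rp_\alpha)\cdot m = r m_\alpha$ for $r\in R$ and $m\in M$, where $m_\alpha$ denotes the degree-$\alpha$ homogeneous component of $m$. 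One checks that the symbols $p_\gamma$ act as orthogonal idempotents projecting onto the homogeneous components, so that $p_\gamma\cdot M = M_\gamma$. In the reverse direction, given an $R\#\Gamma$-module $N$, one sets $N_\gamma := p_\gamma N$ and verifies that $N=\bigoplus_\gamma N_\gamma$ is a $\Gamma$-graded $R$-module, using the multiplication rule $(rp_\alpha)(sp_\beta)=rs_{\alpha\beta^{-1}}p_\beta$ to confirm $R_\alpha N_\gamma\subseteq N_{\alpha\gamma}$.

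Next I would verify that these two assignments are mutually inverse functors and that they send a morphism (a degree-preserving $R$-linear map) to a morphism ($R\#\Gamma$-linear map) and vice versa; this is where the hypothesis that $R$ has graded local units enters, ensuring that the modules in question are unital in the appropriate sense and that the formal sums defining $R\#\Gamma$ interact correctly with the module structure. The key verification is that the functors restrict to an equivalence between the distinguished subcategories: a graded module $P$ is projective in $R\text{-}\Gr$ and finitely generated over $R$ if and only if its image is projective in $R\#\Gamma\text{-}\Mod$ and finitely generated over $R\#\Gamma$. For projectivity, I would use that the two functors form an equivalence of the full module categories (hence preserve and reflect projectives, since projectivity is a categorical property expressible via the lifting of epimorphisms, and both functors are exact and send epimorphisms to epimorphisms). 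Finite generation transfers because the grading shift functors and the idempotents $p_\gamma$ allow one to match finite generating sets on both sides.

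The main obstacle I anticipate is the careful handling of finite generation and the local-units subtlety, rather than the bare module correspondence. Specifically, one must confirm that a finitely generated graded projective left $R$-module corresponds to a finitely generated projective $R\#\Gamma$-module; since $R\#\Gamma$ is typically non-unital (its local units come from finite sums of the $p_\gamma$ together with the local units of $R$), "finitely generated" must be interpreted in the unital sense consistent with the definition of $V(R\#\Gamma)$ given earlier in the excerpt. I would address this by showing that a homogeneous finite generating set for $P$ over $R$ yields a finite generating set for the corresponding module over $R\#\Gamma$ and conversely, invoking that $P$ unital over $R$ forces the relevant components to be supported on finitely many degrees. Once the isomorphism of categories $R\text{-}\Gr_{\proj}\cong R\#\Gamma\text{-}\Mod_{\proj}$ is established and seen to respect direct sums, the monoid statement $V^{\gr}(R)\cong V(R\#\Gamma)$ follows at once, because an isomorphism of categories preserving finite coproducts induces an isomorphism of the associated commutative monoids of isomorphism classes under $[P]+[Q]=[P\oplus Q]$.
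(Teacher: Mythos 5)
Your proposal is correct and follows essentially the same route as the paper: invoke the standard isomorphism between $R$-$\Gr$ and $R\#\Gamma$-$\Mod$ (which the paper imports from \cite[Proposition 2.5]{ara-hazrat-li-sims} rather than reproving), then check it restricts to the finitely generated projective subcategories by transporting homogeneous generating sets in both directions, and read off the monoid isomorphism. One small correction to your construction of the inverse functor: since $R$ is only locally unital, $p_\gamma$ is not itself an element of $R\#\Gamma$, so the degree-$\gamma$ component of an $R\#\Gamma$-module $N$ must be defined as $N_\gamma=\sum\limits_{u\in E}up_\gamma N$ for a fixed set $E$ of graded local units --- exactly the adjustment the paper makes --- rather than as $p_\gamma N$.
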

\begin{proof}
Fix a set $E$ of graded local units for $R$. By \cite[Proposition 2.5]{ara-hazrat-li-sims}, there is an isomorphism of categories $\psi:R\text{-}\Gr\rightarrow R\#\Gamma\text{-}\Mod$ which is defined on objects as follows. If $N$ is an object in $R\text{-}\Gr$, then $\psi(N)=N$ as abelian groups. The left-$R\#\Gamma$ action on $\psi(N)$ is defined by $(rp_\alpha)n=rn_\alpha$ for any $r\in R$, $\alpha\in \Gamma$ and $n\in N$. The inverse functor $\phi:R\#\Gamma\text{-}\Mod\rightarrow R\text{-}\Gr$ of $\psi$ is defined on objects as follows. If $M$ is an object in $R\#\Gamma\text{-}\Mod$, then $\phi(M)=M$ as abelian groups. For each $\gamma\in \Gamma$ set $M_\gamma:=\sum\limits_{u\in E} up_\gamma M$. Then $M=\bigoplus\limits_{\gamma\in\Gamma}M_\gamma$. The left $R$-action on $\psi(M)$ is defined by $rm=(rp_\alpha) m$ for any $\alpha,\gamma\in \Gamma$, $r\in R_\gamma$ and $m\in M_\alpha$. One can show that $rm\in M_{\gamma\alpha}$ for any $r\in R_\gamma$ and $m\in M_\alpha$ and hence $\psi(M)$ is a graded left $R$-module. Below we show that $\psi$ restricts to a functor $\psi_{\proj}:R\text{-}\Gr_{\proj}\rightarrow R\#\Gamma\text{-}\Mod_{\proj}$ and that $\phi$ restricts to a functor $\phi_{\proj}:R\#\Gamma\text{-}\Mod_{\proj}\rightarrow R\text{-}\Gr_{\proj}$.\\
Let $P\in R\text{-}\Gr_{\proj}$. Then $\psi(P)$ is a projective object in $R\#\Gamma\text{-}\Mod$ since $\psi$ is an isomorphism of categories. Since $P$ is finitely generated as a left $R$-module, there are $x_1,\dots,x_n\in P$ such that $P=\sum\limits_{i=1}^n Rx_i$. Clearly we can assume that the $x_i$'s are homogeneous (replace each $x_i$ by its homogeneous components). Let $y=\sum\limits_{i=1}^nr_ix_i\in P$ where $r_1, \dots, r_n\in R$. Then $y= \sum\limits_{i=1}^n(r_i p_{\deg(x_i)})x_i$ in $\psi(P)$. This shows that $\psi(P)$ is finitely generated by the $x_i$'s as a left $R\#\Gamma$-module. Thus $\psi(P)\in R\#\Gamma$-$\Mod_{\proj}$.\\
Let now $Q\in R\#\Gamma\text{-}\Mod_{\proj}$. Then $\phi(Q)$ is a projective object in $R\text{-}\Gr$ since $\phi$ is an isomorphism of categories. Since $Q$ is finitely generated as a left $R\#\Gamma$-module, there are $x_1,\dots,x_n\in Q$ such that $Q=\sum\limits_{i=1}^n (R\#\Gamma)x_i$. Clearly we can assume that the $x_i$'s are homogeneous with respect to the grading on $\phi(Q)$. Let $y=\sum\limits_{i=1}^nr_ix_i\in Q$ where $r_1,\dots, r_n\in R\#\Gamma$. For $1\leq i\leq n$ write $r_i=\sum\limits_{\gamma\in\Gamma}r_{i,\gamma}p_\gamma$ where $r_{i,\gamma}\in R$ for any $\gamma\in\Gamma$. Then $y= \sum\limits_{i=1}^n r_{i,\deg(x_i)}x_i$ in $\phi(Q)$ (note that $(r_{i,\gamma}p_{\gamma})x_i=0$ if $\gamma\neq \deg(x_i)$ since $x_i\in Q_{\deg(x_i)}$). This shows that $\phi(Q)$ is finitely generated by the $x_i$'s as a left $R$-module. Thus $\phi(Q)\in R\text{-}\Gr_{\proj}$.\\
By the previous two paragraphs, $\psi$ restricts to a functor $\psi_{\proj}:R\text{-}\Gr_{\proj}\rightarrow R\#\Gamma\text{-}\Mod_{\proj}$ and $\phi$ restricts to a functor $\phi_{\proj}:R\#\Gamma\text{-}\Mod_{\proj}\rightarrow R\text{-}\Gr_{\proj}$. Clearly $\psi_{\proj}\circ\phi_{\proj}=\Id_{R\#\Gamma\text{-}\Mod_{\proj}}$ and $\phi_{\proj}\circ\psi_{\proj}=\Id_{R\text{-}\Gr_{\proj}}$ since $\psi\circ\phi=\Id_{R\#\Gamma\text{-}\Mod}$ and $\phi\circ\psi=\Id_{R\text{-}\Gr}$. Thus $\psi_{\proj}$ is an isomorphism of categories. We leave it to the reader to deduce that the map
\begin{align*}
V^{\gr}(R)&\rightarrow V(R\#\Gamma)\\
[P]&\mapsto [\psi(P)]
\end{align*}
is a monoid isomorphism.
\end{proof}

\subsection{Admissible weight maps}

\begin{definition}\label{defweight}
Let $H$ be a hypergraph and $E$ the directed graph associated to $H$ (see Remark \ref{remhlpa} (b)). Let $w:E^1\rightarrow \Gamma$ be a map such that $w(h_{ij})=w(h_{i1})w(h_{11})^{-1}w(h_{1j})$ for any $h\in H^1$, $1\leq i\leq |s(h)|$ and $1\leq j\leq |r(h)|$. Then $w$ is called an {\it admissible weight map} for $H$.
\end{definition}

\begin{remark}
Let $H$ be a hypergraph. Set $X:=\{h_{1j}, h_{i1}\mid h\in H^1, 1\leq i\leq |s(h)|, 1\leq j\leq |r(h)|\}\subseteq E^1$. Clearly there is a $1$-$1$ correspondence between the set of all maps $X\rightarrow \Gamma$ and the set of all admissible weight maps $E^1\rightarrow \Gamma$ for $H$.
\end{remark}

\begin{lemma}\label{lemweight1}
Let $H$ be a hypergraph and $w:E^1\rightarrow \Gamma$ an admissible weight map for $H$. Then 
\begin{enumerate}[(i)]
\item $w(h_{ik})w(h_{jk})^{-1}=w(h_{il})w(h_{jl})^{-1}$ for any $h\in H^1$, $1\leq i,j\leq |s(h)|$ and $1\leq k,l\leq |r(h)|$ and
\item $w(h_{ki})^{-1}w(h_{kj})=w(h_{li})^{-1}w(h_{lj})$ for any $h\in H^1$, $1\leq i,j\leq |r(h)|$ and $1\leq k,l\leq |s(h)|$. 
\end{enumerate}
\end{lemma}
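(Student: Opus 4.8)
The plan is to prove both identities by direct substitution of the defining relation of an admissible weight map, observing that each side collapses to an expression that is manifestly independent of the index being varied. Since the whole content of the lemma is this cancellation, no auxiliary results are needed beyond Definition \ref{defweight}.

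For part (i), I would fix $h\in H^1$ and compute $w(h_{ik})w(h_{jk})^{-1}$ by expanding both factors via the admissibility relation $w(h_{ab})=w(h_{a1})w(h_{11})^{-1}w(h_{1b})$. Writing $w(h_{ik})=w(h_{i1})w(h_{11})^{-1}w(h_{1k})$ and $w(h_{jk})=w(h_{j1})w(h_{11})^{-1}w(h_{1k})$, the inverse of the second factor is $w(h_{1k})^{-1}w(h_{11})w(h_{j1})^{-1}$. Multiplying, the middle block $w(h_{1k})w(h_{1k})^{-1}w(h_{11})$ telescopes and one is left with
\[
w(h_{ik})w(h_{jk})^{-1}=w(h_{i1})w(h_{j1})^{-1},
\]
which no longer mentions $k$. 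Applying the same computation with $l$ in place of $k$ gives $w(h_{il})w(h_{jl})^{-1}=w(h_{i1})w(h_{j1})^{-1}$, and comparing the two establishes (i).

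Part (ii) is entirely symmetric, obtained by expanding $w(h_{ki})^{-1}w(h_{kj})$ instead. Here $w(h_{ki})^{-1}=w(h_{1i})^{-1}w(h_{11})w(h_{k1})^{-1}$ and $w(h_{kj})=w(h_{k1})w(h_{11})^{-1}w(h_{1j})$, so the block $w(h_{k1})^{-1}w(h_{k1})w(h_{11})^{-1}$ in the middle telescopes and yields
\[
w(h_{ki})^{-1}w(h_{kj})=w(h_{1i})^{-1}w(h_{1j}),
\]
an expression independent of $k$; replacing $k$ by $l$ and comparing gives (ii). I do not expect any genuine obstacle: the only point requiring a little care is bookkeeping the inverses in the noncommutative group $\Gamma$ (the factors must be kept in order and not reordered), but the cancellations are forced and the argument is purely formal.
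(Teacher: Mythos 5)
Your computation is correct and is exactly the ``straightforward computation'' the paper's proof alludes to: both sides of (i) collapse to $w(h_{i1})w(h_{j1})^{-1}$ and both sides of (ii) to $w(h_{1i})^{-1}w(h_{1j})$ upon substituting the admissibility relation, with the inverses kept in the right order for noncommutative $\Gamma$. No gap; this is the same approach as the paper.
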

\begin{proof}
Straightforward computation.
\end{proof}

\begin{lemma}\label{lemweight2}
Let $H$ be a hypergraph and $w:E^1\rightarrow \Gamma$ an admissible weight map for $H$. Then $w$ induces a $\Gamma$-grading on $L_K(H)$ such that $\deg(v)=\epsilon$, $\deg(h_{ij})=w(h_{ij})$ and $\deg(h_{ij}^*)=w(h_{ij})^{-1}$ for any $v\in H^0$ and $h_{ij}\in E^1$.  
\end{lemma}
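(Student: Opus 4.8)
The plan is to follow the same strategy as in the proof of Proposition \ref{propprop}(iii): lift the proposed grading to the free algebra and then check that the defining relations are homogeneous, so that the grading descends to the quotient. Concretely, I would let $K\X$ be the free $K$-algebra on the set $X=\{v,h_{ij},h_{ij}^*\mid v\in H^0, h\in H^1, 1\le i\le |s(h)|, 1\le j\le |r(h)|\}$, and define a $\Gamma$-grading on $K\X$ by declaring $\deg(v):=\epsilon$, $\deg(h_{ij}):=w(h_{ij})$ and $\deg(h_{ij}^*):=w(h_{ij})^{-1}$ on generators and extending multiplicatively to words (the degree of a word being the product of the degrees of its letters). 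Since $L_K(H)$ is the quotient of $K\X$ by the ideal $I$ generated by relations (i)--(iv) of Definition \ref{defhlpa}, it suffices to show that each of these relations is homogeneous; then $I$ is a graded ideal and the grading passes to $L_K(H)$, giving the asserted degrees.

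First I would dispose of the easy relations. Relation (i), $uv=\delta_{uv}u$, has both sides of degree $\epsilon$. In relation (ii) each equation equates monomials of equal degree, for instance $\deg(s(h)_i h_{ij})=\epsilon\, w(h_{ij})=\deg(h_{ij})$ and $\deg(r(h)_j h_{ij}^*)=\epsilon\, w(h_{ij})^{-1}=\deg(h_{ij}^*)$, and similarly for the remaining two equations; so (ii) is homogeneous.

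The crux is the homogeneity of relations (iii) and (iv), and this is exactly where admissibility of $w$ enters through Lemma \ref{lemweight1}. Consider (iii): the summand $h_{ik}h_{jk}^*$ has degree $w(h_{ik})w(h_{jk})^{-1}$, and by Lemma \ref{lemweight1}(i) this value is independent of $k$, so the left-hand side $\sum_{k}h_{ik}h_{jk}^*$ is homogeneous. When $i=j$ the common degree is $w(h_{ik})w(h_{ik})^{-1}=\epsilon$, matching $\deg(s(h)_i)=\epsilon$; when $i\neq j$ the right-hand side is $0$, which is homogeneous of every degree. Hence (iii) is homogeneous. Relation (iv) is handled identically using Lemma \ref{lemweight1}(ii): the summand $h_{ki}^*h_{kj}$ has degree $w(h_{ki})^{-1}w(h_{kj})$, independent of $k$, equal to $\epsilon$ when $i=j$ (matching $\deg(r(h)_i)=\epsilon$) and irrelevant when $i\neq j$.

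I do not expect any real obstacle beyond these homogeneity checks; the content of the lemma is precisely that admissibility of $w$ forces the ``Leavitt'' relations (iii)--(iv), which mix several edge variables in a single sum, to be homogeneous, and Lemma \ref{lemweight1} packages exactly this. Once all four relations are homogeneous, the ideal $I$ is graded and the quotient $K\X/I=L_K(H)$ inherits the $\Gamma$-grading with the stated degrees on $v$, $h_{ij}$ and $h_{ij}^*$.
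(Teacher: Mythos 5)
Your proposal is correct and follows exactly the paper's proof: the paper likewise lifts the grading to the free algebra $K\X$, invokes Lemma \ref{lemweight1} to see that relations (i)--(iv) of Definition \ref{defhlpa} are homogeneous, and concludes that the grading descends to the quotient $L_K(H)$. Your write-up merely spells out the homogeneity checks (in particular the $k$-independence of $\deg(h_{ik}h_{jk}^*)$ and $\deg(h_{ki}^*h_{kj})$) that the paper leaves implicit.
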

\begin{proof}
Set $X:=\{v,h_{ij},h_{ij}^*\mid v\in H^0, h\in H^1, 1\leq i\leq |s(h)|,  1\leq j\leq |r(h)|\}$ and let $K\X$ denote the free $K$-algebra generated by $X$. Clearly there is a $\Gamma$-grading on $K\X$ defined by $\deg(v):=\epsilon$, $\deg(h_{ij}):=w(h_{ij})$ and $\deg(h_{ij}^*):=w(h_{ij})^{-1}$ for any $v\in E^0$, $h \in E^{1}$, $1\leq i\leq |s(h)|$ and $1\leq j \leq |r(h)|$. It follows from Lemma \ref{lemweight1} that the relations (i)-(iv) in Definition \ref{defhlpa} are homogeneous. Hence the $\Gamma$-grading on $K\X$ induces a $\Gamma$-grading on $L_K(H)$. 
\end{proof}

\begin{example}\label{exweight1}
Let $H$ be a hypergraph. Set $n:=\sup\{|s(h)|\mid h \in H^{1}\}$. Define a map $w:E^1\rightarrow \Z^n$ by $w(h_{ij})=\alpha_i$ where $\alpha_i$ denotes the element of $\mathbb Z^n$ whose $i$-th component is $1$ and whose other components are $0$. The map $w$ induces the standard grading of $L_K(H)$.
\end{example}

\begin{example}\label{exweight2}
Let $H$ be a hypergraph. Set $m:=\sup\{|s(h)|\mid h \in H^{1}\}$ and $n:=\sup\{|r(h)|\mid h \in H^{1}\}$. Define a map $w:E^1\rightarrow \Z^m\oplus \Z^n$ by $w(h_{ij})=(\alpha_i,\alpha_j)$ where $\alpha_i$ is defined as in the previous example. One checks easily that $w$ is an admissible weight map for $H$ and therefore it induces a $\Z^m\oplus \Z^n$-grading on $L_K(H)$.
\end{example}

\subsection{Covering hypergraphs}

\begin{definition} Let $H$ be a hypergraph and $w:E^1\rightarrow \Gamma$ an admissible weight map for $H$. Define a hypergraph $\overline{H}$ by $\overline{H}^0=\{v_{\gamma}\mid v\in H^0, \gamma\in \Gamma\}$, $\overline{H}^1=\{h_{\gamma}\mid h\in H^1, \gamma\in \Gamma\}$,
\[\overline{s}(h_{\gamma}):=\{(s(h)_1)_{\gamma},(s(h)_2)_{w(h_{21})w(h_{11})^{-1}\gamma },\dots,(s(h)_{|s(h)|})_{ w(h_{|s(h)|,1})w(h_{11})^{-1}\gamma}\}\]
and
\[\overline{r}(h_{\gamma}):=\{(r(h)_1)_{w(h_{11})^{-1}\gamma },\dots,(r(h)_{|r(h)|})_{w(h_{1,|r(h)|})^{-1}\gamma }\}.\]
The hypergraph $\overline{H}$ is called the {\it covering hypergraph} of $H$ defined by $w$.
\end{definition}

\begin{proposition}\label{propcov}
Let $H$ be a hypergraph and $w:E^1\rightarrow \Gamma$ an admissible weight map for $H$. Let $\overline{H}$ be the covering hypergraph of $H$ defined by $w$. Then there is an isomorphism $\phi:L_K(\overline{H})\rightarrow L_K(H)\# \Gamma$ such that \[\phi(v_{\gamma})=vp_{\gamma}, ~\phi((h_{\gamma})_{ij})=h_{ij}p_{w(h_{1j})^{-1}\gamma}\text{ and }\phi((h_{\gamma})^*_{ij})=h_{ij}^*p_{w(h_{i1})w(h_{11})^{-1}\gamma}\]
for any $\gamma\in \Gamma$, $v\in H^0$, $h\in H^1$, $1\leq i\leq |s(h)|$, $1\leq j\leq |r(h)|$.
\end{proposition}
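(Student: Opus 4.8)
The plan is to build \(\phi\) via the Universal Property of \(L_K(\overline H)\) (Remark \ref{remhlpa}(c)) and then to establish bijectivity by comparing the nod-path bases of the two algebras (Corollary \ref{corbasis}). First I would check that the proposed images form an \(\overline H\)-family in \(L_K(H)\#\Gamma\), i.e. that the elements \(a_{v_\gamma}=vp_\gamma\), \(b_{h_\gamma,i,j}=h_{ij}p_{w(h_{1j})^{-1}\gamma}\) and \(c_{h_\gamma,i,j}=h_{ij}^*p_{w(h_{i1})w(h_{11})^{-1}\gamma}\) satisfy relations (i)--(iv) of Remark \ref{remhlpa}(c) with \(\overline H\) in place of \(H\). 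These are direct computations using the smash multiplication \((rp_\alpha)(sp_\beta)=rs_{\alpha\beta^{-1}}p_\beta\), the relations of Definition \ref{defhlpa} for \(L_K(H)\), and the \(\Gamma\)-grading of Lemma \ref{lemweight2}. The crux in each verification is that the group element \(\alpha\beta^{-1}\) arising in a product equals the degree of the middle generator, so that its homogeneous component is the whole generator and no term collapses to zero; this is exactly where the admissibility identity \(w(h_{ij})=w(h_{i1})w(h_{11})^{-1}w(h_{1j})\) and its consequences in Lemma \ref{lemweight1} enter. Once the four relations hold, the Universal Property yields \(\phi\) with the stated values.

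For bijectivity I would pass to bases. By Corollary \ref{corbasis} the nod-paths form a \(K\)-basis of \(L_K(\overline H)\), while the elements \(bp_\gamma\) with \(b\) a nod-path of \(H\) and \(\gamma\in\Gamma\) form a \(K\)-basis of \(L_K(H)\#\Gamma\); I would show \(\phi\) maps the first basis bijectively onto the second. The key bookkeeping is that for an edge \(z\) of the double graph \(\hat{\overline E}\) with \(\phi(z)=xp_\alpha\) (where \(x\) is the projection of \(z\) to a generator of \(L_K(H)\)), the source of \(z\) carries the degree label \(\deg(x)\alpha\) and the range of \(z\) carries the label \(\alpha\); checking this again uses admissibility. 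Consequently a word \(z_1\cdots z_n\) is a composable path in \(\overline H\) if and only if the projected word \(x_1\cdots x_n\) is a composable path in \(\hat E\) and \(\alpha_t=\deg(x_{t+1})\alpha_{t+1}\) for all \(t\), the latter being precisely the condition that makes \(\phi(z_1)\cdots\phi(z_n)\) equal \(x_1\cdots x_n\,p_{\alpha_n}\) rather than vanish. I would also observe that a composable pair in \(\overline H\) is forbidden (Definition \ref{defnod}) if and only if its projection is forbidden in \(H\): if two composable edges project to \(h_{i1}h_{j1}^*\) or to \(h_{1i}^*h_{1j}\), equality of the relevant degree labels forces them to come from the same lifted hyperedge \(h_\gamma\), hence to form a forbidden word of \(\overline H\).

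Combining these facts, projection together with the range-degree label sets up a bijection between the nod-paths of \(\overline H\) and the pairs (nod-path of \(H\), element of \(\Gamma\)): the projection is a composable nod-path, the range label recovers \(\delta\in\Gamma\), and the lift of any nod-path with prescribed range label is unique. Since \(\phi\) sends a nod-path \(z\) with range label \(\delta\) to \(b\,p_\delta\) with coefficient \(1\), where \(b\) is its projected nod-path, \(\phi\) carries the basis of \(L_K(\overline H)\) bijectively onto the basis \(\{bp_\gamma\}\) of \(L_K(H)\#\Gamma\). Hence \(\phi\) is a \(K\)-linear isomorphism and therefore an isomorphism of \(K\)-algebras.

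The main obstacle is this bijectivity step, and specifically the covering-space bookkeeping: one must verify, using admissibility, that the degree labels on the vertices of \(\overline H\) are arranged so that composability of paths in \(\overline H\) corresponds exactly to composability of the projected paths in \(\hat E\) together with the degree-matching that prevents the smash products from collapsing, and that forbidden words correspond under projection. Making these labels line up is the whole reason \(\overline H\) is defined as it is, and it is where all the hypotheses (admissibility of \(w\), Lemma \ref{lemweight1}, the grading of Lemma \ref{lemweight2}) are genuinely needed; the construction of \(\phi\) and the identification of the two bases are then routine.
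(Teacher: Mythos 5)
Your proposal is correct and follows essentially the same route as the paper's proof: check that the elements $a_{v_\gamma}=vp_\gamma$, $b_{h_\gamma,i,j}=h_{ij}p_{w(h_{1j})^{-1}\gamma}$, $c_{h_\gamma,i,j}=h_{ij}^*p_{w(h_{i1})w(h_{11})^{-1}\gamma}$ form an $\overline{H}$-family (using the admissibility identity to keep the smash products from collapsing), invoke the Universal Property, and then use the nod-path bases from Theorem \ref{thmbasis} to settle bijectivity. The only minor deviation is that the paper obtains surjectivity directly from the observation that the image contains the generating set $\{vp_\gamma, h_{ij}p_\gamma, h_{ij}^*p_\gamma\}$ and needs the lifting correspondence $\theta\mapsto(\eta_\theta,\gamma_\theta)$ only to be injective (stated there as an easy exercise), whereas you upgrade it to a bijection between nod-paths of $\overline{H}$ and pairs (nod-path of $H$, range label) and thereby get injectivity and surjectivity in one stroke, supplying exactly the covering-space bookkeeping the paper leaves to the reader.
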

\begin{proof}
Set $a_{v_{\gamma}}:=vp_{\gamma}$, $b_{h_{\gamma},i,j}:=h_{ij}p_{w(h_{1j})^{-1}\gamma}$ and $c_{h_{\gamma},i,j}:=h_{ij}^*p_{w(h_{i1})w(h_{11})^{-1}\gamma}$ for any $v_\gamma\in \overline{H}^0$, $h_\gamma\in \overline{H}^1$, $1\leq i\leq |\overline{s}(h_\gamma)|=|s(h)|$, $1\leq j\leq |\overline{r}(h_\gamma)|=|r(h)|$. In order to show that \[X:=\{a_{v_{\gamma}}, b_{h_{\gamma},i,j},c_{h_{\gamma},i,j}\mid v_\gamma\in \overline{H}^0, h_\gamma\in \overline{H}^1, 1\leq i\leq |\overline{s}(h_\gamma)|, 1\leq j\leq |\overline{r}(h_\gamma)|\}\]
is an $\overline{H}$-family in $L_K(H)\# \Gamma$ one has to show that the relations (i)-(iv) in Remark \ref{remhlpa} (c) are satisfied. We leave (i) and (ii) to the reader and show only (iii) and (iv). \\
Let $h_\gamma\in \overline{H}^1$ and $1\leq i,j\leq |\overline{s}(h_\gamma)|$. Clearly
\begin{align*}
&\sum\limits_{k=1}^{|\overline{r}(h_\gamma)|}b_{h_\gamma,i,k}c_{h_\gamma,j,k}\\
=&\sum\limits_{p=1}^{|\overline{r}(h_\gamma)|}h_{ik}p_{w(h_{1k})^{-1}\gamma}h_{jk}^*p_{w(h_{j1})w(h_{11})^{-1}\gamma}\\
=&\sum\limits_{k=1}^{|\overline{r}(h_\gamma)|}h_{ik}(h_{jk}^*)_{w(h_{1k})^{-1}w(h_{11})w(h_{j1})^{-1}}p_{w(h_{j1})w(h_{11})^{-1}\gamma}\\
=&\sum\limits_{k=1}^{|\overline{r}(h_\gamma)|}h_{ik}(h_{jk}^*)_{w(h_{jk})^{-1}}p_{w(h_{j1})w(h_{11})^{-1}\gamma}\\
=&\sum\limits_{k=1}^{|r(h)|}h_{ik}h_{jk}^*p_{w(h_{j1})w(h_{11})^{-1}\gamma}\\
=&\delta_{ij}s(h)_ip_{w(h_{j1})w(h_{11})^{-1}\gamma}\\
=&\delta_{ij}a_{\overline{s}(h_\gamma)_i}
\end{align*}
and hence (iii) holds. Let now $h_\gamma\in \overline{H}^1$ and $1\leq i,j\leq |\overline{r}(h_\gamma)|$. Clearly
\begin{align*}
&\sum\limits_{k=1}^{|\overline{s}(h_\gamma)|}c_{h_\gamma,k,i}b_{h_\gamma,k,j}\\
=&\sum\limits_{k=1}^{|\overline{s}(h_\gamma)|}h_{ki}^*p_{w(h_{k1})w(h_{11})^{-1}\gamma} h_{kj}p_{w(h_{1j})^{-1}\gamma}\\
=&\sum\limits_{k=1}^{|\overline{s}(h_\gamma)|}h_{ki}^*(h_{kj})_{w(h_{k1})w(h_{11})^{-1}w(h_{1j})}p_{w(h_{1j})^{-1}\gamma}\\
=&\sum\limits_{k=1}^{|\overline{s}(h_\gamma)|}h_{ki}^*(h_{kj})_{w(h_{kj})}p_{w(h_{1j})^{-1}\gamma}\\
=&\sum\limits_{k=1}^{|s(h)|}h_{ki}^*h_{kj}p_{w(h_{1j})^{-1}\gamma}\\
=&\delta_{ij}r(h)_ip_{w(h_{1j})^{-1}\gamma}\\
=&\delta_{ij}a_{\overline{r}(h_\gamma)_i}
\end{align*}
and hence (iv) holds. Thus $X$ is an $\overline{H}$-family in $L_K(H)\# \Gamma$ and therefore there is a unique $K$-algebra homomorphism $\phi:L_K(\overline{H})\rightarrow L_K(H)\# \Gamma$ such that \[\phi(v_{\gamma})=vp_{\gamma}, ~\phi((h_{\gamma})_{ij})=h_{ij}p_{w(h_{1j})^{-1}\gamma}\text{ and }\phi((h_{\gamma})^*_{ij})=h_{ij}^*p_{w(h_{i1})w(h_{11})^{-1}\gamma}\]
for any $\gamma\in \Gamma$, $v\in H^0$, $h\in H^1$, $1\leq i\leq |s(h)|$, $1\leq j\leq |r(h)|$.\\
Clearly the image of $\phi$ contains the set $S:=\{v p_\gamma, h_{ij}p_\gamma, h^*_{ij}p_\gamma\mid v\in H^0, h\in H^1, 1\leq i\leq |s(h)|, 1\leq j\leq |r(h)|\}$. But $S$ generates $L_K(H)\# \Gamma$ as a $K$-algebra and therefore $\phi$ is surjective. It remains to show that $\phi$ is injective. Let $A$ denote the set of all nod-paths $\theta$ for $L_K(\overline{H})$ and $B$ the set of all nod-paths $\eta$ for $L_K(H)$. It is an easy exercise to show that there is an injective map $f:A\rightarrow B\times \Gamma$, $\theta\mapsto (\eta_\theta, \gamma_\theta)$ that has the property that $\phi(\theta)=\eta_\theta p_{\gamma_\theta}$. Now let $x=\sum\limits_{\theta\in A}k_{\theta}\theta\in L_K(\overline{H})$ where almost all coefficients $k_\theta\in K$ are zero. Then $\phi(x)=\sum\limits_{\theta\in A} k_{\theta}\eta_\theta p_{\gamma_\theta}=\sum\limits_{\gamma\in \Gamma}(\sum\limits_{\substack{\theta\in A,\\\gamma_\theta=\gamma}} k_{\theta}\eta_\theta )p_{\gamma}$. Assume now that $\phi(x)=0$. Then $\sum\limits_{\substack{\theta\in A,\\\gamma_\theta=\gamma}} k_{\theta}\eta_\theta=0$ in $L_K(H)$ for any $\gamma\in \Gamma$. But since $f$ is injective, we have $\eta_{\theta_1}\neq \eta_{\theta_2}$ for any $\theta_1, \theta_2\in A$ such that $\theta_1\neq \theta_2$ and $\gamma_{\theta_1}=\gamma_{\theta_2}$. It follows from Theorem \ref{thmbasis} that $k_\theta=0$ for any $\theta\in A$ and hence $x=0$. Thus $\phi$ is injective.
\end{proof}

\subsection{The monoid $V^{\gr}(L_K(H))$}
In this subsection $H$ denotes a hypergraph. We fix an admissible weight map $w:E\rightarrow \Gamma$ for $H$. Recall that $w$ induces a $\Gamma$-grading on $L_K(H)$.

\begin{definition}
We define $M^{\gr}(H)$ as the abelian monoid presented by the generating set $\{v_\gamma\mid v\in H^0, \gamma\in \Gamma\}$ and the relations 
\[\sum\limits_{i=1}^{|s(h)|}(s(h)_i)_{w(h_{i1})w(h_{11})^{-1}\gamma}=\sum\limits_{j=1}^{|r(h)|}(r(h)_j)_{w(h_{1j})^{-1}\gamma }\quad (h\in H^1, \gamma\in\Gamma).\]
\end{definition}

\begin{theorem}\label{thmgrmonoid}
$V^{\gr}(L_K(H))\cong M^{\gr}(H)$.
\end{theorem}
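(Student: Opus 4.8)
The plan is to realize the desired isomorphism as a composite of three isomorphisms already established in the excerpt, and then to finish by a direct comparison of two monoid presentations. First I would note that $(L_K(H),H^0)$ is a ring with enough idempotents whose distinguished idempotents $v\in H^0$ satisfy $\deg(v)=\epsilon$ under the $\Gamma$-grading induced by $w$ (Lemma \ref{lemweight2}); since the finite sums of distinct elements of $H^0$ are therefore homogeneous idempotents forming a set of local units, $L_K(H)$ has graded local units. Proposition \ref{propsmash} then applies and yields
\[
V^{\gr}(L_K(H))\cong V(L_K(H)\#\Gamma).
\]

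Next I would bring in the covering hypergraph $\overline{H}$ of $H$ defined by $w$. Proposition \ref{propcov} provides a $K$-algebra isomorphism $\phi:L_K(\overline{H})\rightarrow L_K(H)\#\Gamma$, and since $V$ is a functor on the category of $K$-algebras with local units, $\phi$ induces $V(L_K(\overline{H}))\cong V(L_K(H)\#\Gamma)$. Applying Theorem \ref{thmm} to the hypergraph $\overline{H}$ gives $V(L_K(\overline{H}))\cong M(\overline{H})$, so the chain reads
\[
V^{\gr}(L_K(H))\cong V(L_K(H)\#\Gamma)\cong V(L_K(\overline{H}))\cong M(\overline{H}).
\]

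It then remains to identify $M(\overline{H})$ with $M^{\gr}(H)$, and here I expect the whole matter to reduce to unwinding the two definitions. By Definition \ref{defM}, $M(\overline{H})$ is presented by the generating set $\overline{H}^0=\{v_\gamma\mid v\in H^0,\ \gamma\in\Gamma\}$ subject to the relations $\sum\overline{s}(h_\gamma)=\sum\overline{r}(h_\gamma)$ for $h_\gamma\in\overline{H}^1$. Reading off the definition of the covering hypergraph, one has $\sum\overline{s}(h_\gamma)=\sum_{i=1}^{|s(h)|}(s(h)_i)_{w(h_{i1})w(h_{11})^{-1}\gamma}$ (the index $i=1$ recovering the term $(s(h)_1)_\gamma$ since $w(h_{11})w(h_{11})^{-1}=\epsilon$) and $\sum\overline{r}(h_\gamma)=\sum_{j=1}^{|r(h)|}(r(h)_j)_{w(h_{1j})^{-1}\gamma}$. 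These are exactly the generators and defining relations of $M^{\gr}(H)$, whence $M(\overline{H})=M^{\gr}(H)$ and the chain closes to give $V^{\gr}(L_K(H))\cong M^{\gr}(H)$.

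The argument carries no serious obstacle, as the genuinely computational work has been front-loaded into Proposition \ref{propcov} (the verification that the candidate map defines an $\overline{H}$-family and is bijective) and into Proposition \ref{propsmash}. The only points requiring care are bookkeeping ones: confirming that the group elements and indices in the source and range multisets of $\overline{H}$ match term-by-term with those in the presentation of $M^{\gr}(H)$, and checking that the local units of $L_K(H)$ are genuinely homogeneous so that Proposition \ref{propsmash} is applicable.
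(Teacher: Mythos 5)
Your proof is correct and follows essentially the same route as the paper: compose $V^{\gr}(L_K(H))\cong V(L_K(H)\#\Gamma)$ (Proposition \ref{propsmash}) with $V(L_K(H)\#\Gamma)\cong V(L_K(\overline{H}))$ (Proposition \ref{propcov}) and then apply Theorem \ref{thmm} to $\overline{H}$. The only difference is that you spell out two points the paper leaves implicit --- the homogeneity of the local units and the term-by-term identification $M(\overline{H})=M^{\gr}(H)$ --- and both of your verifications are accurate.
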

\begin{proof}
Set $L:=L_K(H)$ and $\overline{L}:=L_K(\overline{H})$ where $\overline{H}$ is the covering hypergraph of $H$ defined by $w$. By Propositions \ref{propsmash} and \ref{propcov} we have $V^{\gr}(L)\cong V(L\#\Gamma)\cong V(\overline{L})$. By Theorem \ref{thmm} we have $V(\overline{L})\cong M^{\gr}(H)$. Thus $V^{\gr}(L_K(H))\cong M^{\gr}(H)$.
\end{proof}

\begin{corollary}
Let $(E,C)$ be a finitely separated graph. Then the abelian monoid $V^{\gr}(L_K(E,C))$ defined with respect to the standard $\Z$-grading of $L_K(E,C)$ is presented by the generating set $\{v_\gamma\mid v\in E^0, n\in \Z\}$ and the relations 
\[s(X)_n=\sum\limits_{e\in X}r(e)_{n-1}\quad (X\in C, n\in\Z)\]
where $s(X)$ denotes the common source of the edges in $X$.
\end{corollary}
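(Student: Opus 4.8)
The plan is to realize $L_K(E,C)$ as the Leavitt path algebra of a hypergraph and then apply Theorem \ref{thmgrmonoid}. First I would take the hypergraph $H$ associated to $(E,C)$ in Example \ref{ex3}, so that $H^0=E^0$, $H^1=\{h_X\mid X\in C\}$, $s'(h_X)=\{s(X)\}$ and $r'(h_X)=\{r(e_{X,1}),\dots,r(e_{X,n_X})\}$; by Example \ref{ex3} there is an isomorphism $L_K(E,C)\cong L_K(H)$, which moreover is graded for the standard gradings (see the remark following Proposition \ref{propprop}). Since $|s'(h_X)|=1$ for every $X\in C$, the supremum $n=\sup\{|s(h)|\mid h\in H^1\}$ equals $1$, so the standard grading on $L_K(H)$ is exactly the $\Z$-grading of Example \ref{exweight1}, namely the one induced by the admissible weight map $w$ with $w((h_X)_{1j})=1$ for all $X$ and $j$. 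Thus $V^{\gr}(L_K(E,C))\cong V^{\gr}(L_K(H))$, the latter computed with respect to the grading induced by this $w$.

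Next I would invoke Theorem \ref{thmgrmonoid}, which gives $V^{\gr}(L_K(H))\cong M^{\gr}(H)$, and then simply unwind the presentation of $M^{\gr}(H)$ for this particular $H$ and $w$. The generators are $\{v_\gamma\mid v\in H^0=E^0,\ \gamma\in\Z\}$, which is precisely the claimed generating set. For the relations, fix $X\in C$ (so $h=h_X$) and $\gamma=n\in\Z$. Because $|s(h_X)|=1$, the left-hand sum collapses to the single term $(s(X))_{w((h_X)_{11})w((h_X)_{11})^{-1}n}=(s(X))_n$, since $w((h_X)_{11})w((h_X)_{11})^{-1}$ is the identity of $\Z$. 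On the right-hand side each exponent is $w((h_X)_{1j})^{-1}n=n-1$ because $w((h_X)_{1j})=1$, so the right-hand sum is $\sum_{j=1}^{n_X}(r(e_{X,j}))_{n-1}=\sum_{e\in X}r(e)_{n-1}$. This yields exactly the relation $s(X)_n=\sum_{e\in X}r(e)_{n-1}$ for each $X\in C$ and $n\in\Z$, completing the identification.

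The only real subtlety, and the step I would be most careful about, is the bookkeeping of the group operations hidden in the subscripts of $M^{\gr}(H)$: one must remember that $\Gamma=\Z$ is written multiplicatively in the definition of $M^{\gr}(H)$ but acts additively here, so that $w((h_X)_{1j})^{-1}\gamma$ produces the shift $n\mapsto n-1$ responsible for the degree drop between source and range in the final relations. Everything else is a direct substitution, and checking that the weight map $w$ above genuinely induces the standard $\Z$-grading (equivalently, that it matches Example \ref{exweight1} with $n=1$) is routine.
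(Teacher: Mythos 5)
Your proof is correct and is exactly the route the paper intends: the corollary is the specialization of Theorem \ref{thmgrmonoid} to the hypergraph $H$ of Example \ref{ex3}, using the graded isomorphism $L_K(E,C)\cong L_K(H)$ and the fact (Example \ref{exweight1}) that the standard $\Z$-grading is induced by the admissible weight map with $w((h_X)_{1j})=1$, after which the presentation of $M^{\gr}(H)$ collapses to the stated relations since $|s'(h_X)|=1$. Your bookkeeping of the additive-versus-multiplicative notation for $\Gamma=\Z$, giving the shift $n\mapsto n-1$ on the range side, matches the paper's conventions.
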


\begin{corollary}
Let $(E,w)$ be a row-finite vertex-weighted graph. Then the abelian monoid $V^{\gr}(L_K(E,w))$ defined with respect to the standard $\Z^n$-grading of $L_K(E,w)$ (where $n$ is the supremum of the set of all weights) is presented by the generating set $\{v_\gamma\mid v\in E^0, \gamma\in \Z^n\}$ and the relations 
\[\sum\limits_{i=1}^{w(v)}v_{\gamma+\alpha_i-\alpha_1}=\sum\limits_{e\in s^{-1}(v)}r(e)_{\gamma -\alpha_1}\quad (v\in \Reg(E), \gamma\in\Z^n)\]
where $\alpha_i$ denotes the element of $\Z^n$ whose $i$-th component is $1$ and whose other components are zero and $\Reg(E)$ denotes the set of all vertices in $E$ which emit at least one edge.
\end{corollary}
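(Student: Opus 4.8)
The plan is to realize $L_K(E,w)$ as the Leavitt path algebra of the hypergraph $H$ attached to $(E,w)$ in Example \ref{ex4}, to apply Theorem \ref{thmgrmonoid} to $H$, and then to unwind the presentation of $M^{\gr}(H)$ into the stated form. The only real work is bookkeeping: translating the abstract relations of $M^{\gr}(H)$, written in multiplicative $\Gamma$-notation, into additive $\Z^n$-notation for the concrete hypergraph at hand.

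First I would recall that by Example \ref{ex4} there is an isomorphism $L_K(E,w)\cong L_K(H)$, where $H^0=E^0$, $H^1=\{h_v\mid v\in\Reg(E)\}$, $s'(h_v)=\{\,\underbrace{v,\dots,v}_{w(v)}\,\}$ and $r'(h_v)=\{r(e_{v,1}),\dots,r(e_{v,n_v})\}$ with $s^{-1}(v)=\{e_{v,1},\dots,e_{v,n_v}\}$. By the Remark following Proposition \ref{propprop} this is a graded isomorphism for the standard $\Z^n$-gradings, so $V^{\gr}(L_K(E,w))\cong V^{\gr}(L_K(H))$. By Example \ref{exweight1} the standard $\Z^n$-grading of $L_K(H)$ is induced by the admissible weight map $w$ with $w((h_v)_{ij})=\alpha_i$, so I may apply Theorem \ref{thmgrmonoid} with this weight map to obtain $V^{\gr}(L_K(H))\cong M^{\gr}(H)$.

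Finally I would compute $M^{\gr}(H)$ directly from its definition. Its generators are $v_\gamma$ for $v\in E^0$ and $\gamma\in\Z^n$, which matches the asserted generating set. For the hyperedge $h_v$ I would substitute $s'(h_v)_i=v$ and $r'(h_v)_j=r(e_{v,j})$, together with the weights $w((h_v)_{i1})w((h_v)_{11})^{-1}=\alpha_i-\alpha_1$ and $w((h_v)_{1j})^{-1}=-\alpha_1$ (written additively in $\Z^n$). The defining relation of $M^{\gr}(H)$ for $h_v$ and parameter $\gamma$ then reads
\[\sum_{i=1}^{w(v)}v_{\gamma+\alpha_i-\alpha_1}=\sum_{j=1}^{n_v}r(e_{v,j})_{\gamma-\alpha_1}=\sum_{e\in s^{-1}(v)}r(e)_{\gamma-\alpha_1},\]
which is exactly the stated relation, with $v$ ranging over $\Reg(E)$ and $\gamma$ over $\Z^n$. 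This completes the identification. The step requiring the most care is this last substitution, since one must keep the index $i$ (running over the $w(v)$ copies of $v$ in $s'(h_v)$) distinct from the translation of the group elements $\alpha_i-\alpha_1$; but no genuine difficulty arises beyond the bookkeeping.
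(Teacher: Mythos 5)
Your proposal is correct and follows exactly the route the paper intends: the paper states this corollary without proof as an immediate consequence of Theorem \ref{thmgrmonoid} applied to the hypergraph $H$ of Example \ref{ex4} with the admissible weight map of Example \ref{exweight1}, and your substitution $w((h_v)_{i1})w((h_v)_{11})^{-1}=\alpha_i-\alpha_1$, $w((h_v)_{1j})^{-1}=-\alpha_1$ into the presentation of $M^{\gr}(H)$ is the correct bookkeeping. Your use of the graded isomorphism $L_K(E,w)\cong L_K(H)$ from the Remark after Proposition \ref{propprop} is precisely the step needed to transfer the result to $V^{\gr}(L_K(E,w))$.
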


\end{document}